\documentclass[11pt]{amsart}
\usepackage{amsmath,amssymb,latexsym,esint,cite}
\usepackage{verbatim}
\usepackage[left=2.9cm,right=2.9cm,top=3.2cm,bottom=3.2cm]{geometry}

\usepackage{color,enumitem,graphicx}
\usepackage[colorlinks=true,urlcolor=blue, citecolor=red,linkcolor=blue,linktocpage,pdfpagelabels, bookmarksnumbered,bookmarksopen]{hyperref}

\usepackage[hyperpageref]{backref}
\usepackage[english]{babel}

\newcommand{\N}{\mathbb N}

\newcommand{\pnorm}[2][]{\if #1'' \left|#2\right|_p \else \left|#2\right|_{#1} \fi}
\newcommand{\R}{\mathbb R}

\newcommand{\I}{\mathbb I}
\newcommand{\J}{\mathbb J}

\newcommand{\RN}{{\mathbb{R}^N}}
\newcommand{\CA}{{\mathcal A}}
\newcommand{\CB}{{\mathcal B}}
\newcommand{\CC}{{\mathcal C}}
\newcommand{\HN}{H^1}
\newcommand{\LN}{L^2}
\newcommand{\LI}{L^{\infty}(\RN)}

\newtheorem{lemma}{Lemma}[section]
\newtheorem{proposition}[lemma]{Proposition}
\newtheorem{theorem}[lemma]{Theorem}
\newtheorem{remark}[lemma]{Remark}
\newtheorem{corollary}[lemma]{Corollary}

\newcommand{\ef}{\eqref}

\numberwithin{equation}{section}

\title[Uniqueness of limit flow for quasi-linear parabolic equations]{Uniqueness 
	of limit flow for a class \\ of quasi-linear parabolic equations}

\author[M.\ Squassina]{Marco Squassina}
\author[T.\ Watanabe]{Tatsuya Watanabe}

\address[M.\ Squassina]{Dipartimento di Informatica
	\newline\indent
	Universit\`a degli Studi di Verona
	\newline\indent
	Verona, Italy}
\email{marco.squassina@univr.it}

\address[T. Watanabe]{Department of Mathematics
	\newline\indent	
	Faculty of Science, Kyoto Sangyo University
		\newline\indent
	 Motoyama, Kamigamo, Kita-ku, Kyoto-City, Japan}
\email{tatsuw@cc.kyoto-su.ac.jp}

\subjclass[2010]{Primary 35K59, 35B40 Secondary 35B44}
\keywords{Quasilinear parabolic equation, asymptotic behavior, $\omega$-limit set, blow-up}

\thanks{This paper was carried out while the second author was staying at
	University Bordeaux I. The author is very grateful to all the staff of 
	University Bordeaux I for their kind hospitality.
	The first author is supported by Gruppo Nazionale per l'Analisi Matematica,
	la Probabilit\`a e le loro Applicazioni (INdAM).
	The second author is supported by 
	JSPS Grant-in-Aid for Scientific Research (C) (No. 15K04970).}

\begin{document}

\begin{abstract}
We investigate the issue of uniqueness of the limit flow for a relevant class of quasi-linear parabolic equations defined on the whole space. 
More precisely, we shall investigate conditions which guarantee that the global solutions decay at infinity uniformly in time and their entire trajectory approaches a single steady state as time goes to infinity. 
Finally, we obtain a characterization of solutions which blow-up, vanish or converge to a stationary state for initial data of the form $\lambda \varphi_0$ while $\lambda>0$ crosses a bifurcation value $\lambda_0$.
\end{abstract}

\maketitle

\begin{center}
	\begin{minipage}{8cm}
		\small
		\tableofcontents
	\end{minipage}
\end{center}

\section{Introduction and main results}

\subsection{Overview}
In the last decades, a considerable attention has been devoted to the study of solutions 
to the quasi-linear Schr\"odinger equation
\begin{equation}
\label{eq:1.0}
{\rm i} u_{t}+\Delta u+u \Delta u^2 -u +|u|^{p-1}u=0 \quad \text{in $\R^N\times (0,\infty)$}.
\end{equation}
In fact, this equation arises in superfluid film equation in plasma physics, see \cite{Briz1,BEPZ},
and it is also a more accurate model in a many physical phenomena compared with
the classical semi-linear Schr\"odinger equation ${\rm i} u_{t}+\Delta u-u+|u|^{p-1}u=0$. 
In particular, local well-posedness, regularity, existence and properties of ground states 
as well as stability of standing wave solutions were investigated, 
see e.g.\ \cite{CJS} and the references therein. 
The problem raised the attention also in the framework of non-smooth critical point theory, 
since the functional associated with the standing wave solutions of \eqref{eq:1.0}
$$
u\mapsto   \frac{1}{2} \int_{\RN} (1+2u^2) |\nabla u|^2dx +\frac{1}{2} \int_{\RN} u^2dx
-\frac{1}{p+1} \int_{\RN} |u|^{p+1} \,dx,
$$
is merely lower semi-continuous on the Sobolev space $H^1(\R^N)$ 
and it turns out that it is differentiable only along bounded directions. 
Hence on $H^1(\R^N)$, the existence of critical points required the development of new tools 
and ideas, see e.g.\ \cite{pelsqu,solfsqu} and \cite{liuwang}.

In this paper, motivated by the results obtained in \cite{delpino-cort,CGH} 
for a class of semi-linear parabolic equations, 
we aim to investigate the asymptotic behavior for the quasi-linear parabolic problem
\begin{align}
\label{eq:1.1}
 u_{t}-\Delta u-u \Delta u^2 +u =|u|^{p-1}u \quad & \text{in $\R^N\times (0,\infty)$}, \\
\label{eq:1.2}
 u(x,0)=u_0(x) \quad & \text{in $\R^N$},
\end{align}
whose corresponding stationary problem is
\begin{align}
\label{eq:1.3}
-\Delta u -u \Delta u^2+u &=|u|^{p-1}u \quad  \text{in $\R^N$}, \\
u(x) \to 0 \quad & \text{as $|x| \to \infty.$}   \notag
\end{align}
More precisely, we deal with the problem of {\em uniqueness} of the limit 
of bounded trajectories of \eqref{eq:1.1}-\eqref{eq:1.2}. 
Since the problem is invariant under translations, 
even knowing that \eqref{eq:1.3} admits a unique solution up to translations 
in general does not prevent from having different positively diverging sequences 
$\{t_n\}_{n\in\N}$, $\{\tau _n\}_{n\in\N},$ such that 
$\{u(\cdot,t_n)\}_{n\in\N}$ and $\{u(\cdot,\tau_n)\}_{n\in\N}$ 
converge to different solutions to \eqref{eq:1.3}. 
As proved by L.~Simon in a celebrated paper \cite{simon} (see also \cite{jen}), 
in the case of variational parabolic problems such as $u_t+{\mathcal E}'(u,\nabla u)=0$ 
where the associated Lagrangian ${\mathcal E}(s,\xi)$ depends {\em analytically} 
on its variables $(s,\xi)$, 
then it is always the case that the full flow $u(t)$ converges to a stationary solution 
of ${\mathcal E}'(u,\nabla u)=0 $ and oscillatory behavior is thus ruled out. 
The argument is essentially based upon Lojasiewicz inequality \cite{loj} 
and a series of additional estimates. 
On the other hand for \eqref{eq:1.1}, the assumptions of \cite{simon} are not fulfilled 
due to the presence of the {\em non-analytical} nonlinearity $u\to |u|^{p-1}u$, 
unless $p$ is an odd integer. 
In general, without the analyticity assumption, 
the $\omega$-limit set corresponding to a suitable sub-manifold of initial data 
is a {\em continuum} of $H^1$ which is homeomorphic to the sphere, see \cite{polacik2,polacik1}. 
However, equation \eqref{eq:1.3} has been object of various investigations 
for what concerns uniqueness and {\em non-degeneracy} of solutions. 
By working on the linearized operator ${\mathcal L}$ around a stationary solution $w$, namely
\begin{equation} \label{linearized}
{\mathcal L} \phi= -(1+2w^2)\Delta \phi -4w \nabla w \cdot \nabla \phi 
-(4w\Delta w+2| \nabla w|^2) \phi +\phi -p|w|^{p-1} \phi,
\end{equation}
and by exploiting the non-degeneracy \cite{ASW1,S} of the positive radial solutions 
to \eqref{eq:1.3}, i.e. 
$$
{\rm Ker} ({\mathcal L})= {\rm span} \Big\{\frac{\partial w}{\partial x_1},\ldots,\frac{\partial w}{\partial x_N}\Big\},
$$
inspired by the ideas of \cite{delpino-cort} where the semi-linear case is considered, 
we will be able to prove that, in fact, 
the flow of \eqref{eq:1.1}-\eqref{eq:1.2} enjoys uniqueness. 
As to similar results for semi-linear parabolic problems, 
see \cite{BJP, CGH, FP} and references therein.
Throughout the paper we shall assume that
$$
3\le \, p<\frac{3N+2}{N-2} \quad\text{if $N \ge 3$},\qquad 
3 \le p<\infty \quad\text{if $N=1,2$}.
$$
We will deal with classical solutions $u\in C([0,T_0),C^2(\R^N))\cap C^1((0,T_0),C(\R^N))$
to \ef{eq:1.1}-\ef{eq:1.2}, whose local existence and additional properties will be established in Section 2. 
The uniqueness of positive solutions of \ef{eq:1.3} has been investigated in \cite{AW,GS}, 
while the non-degeneracy of the unique positive solution has been also obtained 
in \cite{ASW1, ASW2, S}. 
We also note that the unique positive solution $w$ of \ef{eq:1.3} is radially symmetric
with respect to a point $x_0\in \RN$ and decays exponentially at infinity.
For a good source of references for the issue of long term behavior of semi-linear parabolic equations, we refer the reader to \cite{gazzolaweth}.

\vskip4pt
\subsection{Main results}
The followings are the main results of the paper.

\begin{theorem}[Decaying solutions]
	\label{thm:1.2}
Let $N \ge 2$ and let $u_0 \in C_0^{\infty}(\R^N)$ be non-negative and radially non-increasing. 
Let $u$ be the corresponding solution to \ef{eq:1.1}-\ef{eq:1.2} and 
assume that it is globally defined. 
Then $u$ is positive, bounded, radially decreasing and 
	\begin{equation}\label{eq:1.4}
	\lim_{|x| \to \infty} \sup_{ t>0} u(x,t)=0.
	\end{equation}
\end{theorem}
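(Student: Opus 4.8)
The plan is to prove the four assertions in sequence—radial symmetry together with positivity, then strict radial monotonicity, then the uniform bound, and finally the decay \ef{eq:1.4}, which will be a short consequence of the preceding steps. Since \ef{eq:1.1} is invariant under rotations and $u_0$ is radial, uniqueness for the Cauchy problem (Section~2) forces $u(\cdot,t)$ to remain radial for every $t$; I write $u=u(r,t)$ with $r=\abs{x}$. For positivity, note that $v\equiv 0$ solves \ef{eq:1.1} and that the operator is locally uniformly parabolic because its leading coefficient $1+2u^2\ge 1$; hence the comparison principle gives $u\ge 0$, and the strong maximum principle upgrades this to $u>0$ for $t>0$ (the nontrivial $u_0\in C_0^\infty(\RN)$ being nonnegative).

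For monotonicity I would differentiate the radial form
\[
u_t=(1+2u^2)\Big(u_{rr}+\tfrac{N-1}{r}u_r\Big)+2u\,u_r^2-u+u^p
\]
with respect to $r$ and set $w=u_r$. Treating $u$ and its derivatives as known smooth coefficients and absorbing the terms quadratic and cubic in $w$ into the zeroth-order coefficient, $w$ solves a linear, uniformly parabolic equation $w_t=(1+2u^2)w_{rr}+b\,w_r+c\,w$ on $(0,\infty)$, homogeneous in $w$. The data are $w(0,t)=0$ (smoothness of the radial profile), $w(r,t)\to 0$ as $r\to\infty$, and $w(r,0)=u_0'(r)\le 0$. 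The maximum principle then yields $w\le 0$, and the strong maximum principle gives $w<0$ for $r>0$, $t>0$, the singular coefficients at the origin being handled by working on $(\delta,\infty)$ and letting $\delta\to 0$. In particular $u$ is strictly radially decreasing, so $\norm[\infty]{u(\cdot,t)}=u(0,t)$.

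The uniform bound is the heart of the matter and the step I expect to be the main obstacle. Here I would exploit the gradient-flow structure: \ef{eq:1.1} reads $u_t=-\mathcal{E}'(u)$ for
\[
\mathcal{E}(u)=\tfrac12\int_{\RN}(1+2u^2)\abs{\nabla u}^2\,dx+\tfrac12\int_{\RN}u^2\,dx-\tfrac1{p+1}\int_{\RN}\abs{u}^{p+1}\,dx,
\]
so $t\mapsto\mathcal{E}(u(t))$ is non-increasing with $\frac{d}{dt}\mathcal{E}(u(t))=-\norm[2]{u_t}^2$. Testing \ef{eq:1.1} with $u$, using $4\int_{\RN}u^2\abs{\nabla u}^2\,dx=\norm[2]{\nabla(u^2)}^2$ and the standing hypothesis $p\ge 3$, gives the identity
\[
\tfrac12\tfrac{d}{dt}\norm[2]{u(\cdot,t)}^2=\int_{\RN}\Big[\tfrac{p-1}{2}+(p-3)u^2\Big]\abs{\nabla u}^2\,dx+\tfrac{p-1}{2}\int_{\RN}u^2\,dx-(p+1)\mathcal{E}(u(t)),
\]
in which every term but the last is nonnegative precisely because $p\ge 3$. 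From this a concavity (Levine-type) argument shows that $\mathcal{E}(u(t_0))<0$ at some time would force $\norm[2]{u(\cdot,t)}^2$ to blow up in finite time; since $u$ is globally defined, $\mathcal{E}(u(t))\ge 0$ for all $t$, so the energy stays in $[0,\mathcal{E}(u_0)]$ and $\int_0^\infty\norm[2]{u_t}^2\,dt\le\mathcal{E}(u_0)$. The real difficulty is to convert this into a genuine uniform-in-time bound: the reaction $\abs{u}^{p-1}u$ is superlinear and $\mathcal{E}$ is not coercive, so no comparison supersolution is available. I would extract it from the subcriticality $p+1<\frac{4N}{N-2}$, which makes $\int_{\RN}\abs{u}^{p+1}\,dx$ absorbable by the dissipation $\norm[2]{\nabla u}^2+\norm[2]{\nabla(u^2)}^2$ via Gagliardo--Nirenberg applied to $u^2$, and then run a bootstrap on the $L^q$ norms to reach $\sup_{t>0}\norm[\infty]{u(\cdot,t)}<\infty$ and, in particular, $C:=\sup_{t>0}\norm[2]{u(\cdot,t)}<\infty$.

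Finally, the decay \ef{eq:1.4} follows cleanly from the uniform $L^2$ bound and strict radial monotonicity. For $\abs{x}=R$, since $u(\cdot,t)\ge u(R,t)$ on $B_R$ we have $u(R,t)^2\abs{B_R}\le\int_{B_R}u^2\,dx\le C^2$, whence $u(R,t)\le C\,(\omega_N R^N)^{-1/2}$ with $\omega_N=\abs{B_1}$. As $N\ge 2$, the right-hand side tends to $0$ as $R\to\infty$ uniformly in $t$, which is exactly $\lim_{\abs{x}\to\infty}\sup_{t>0}u(x,t)=0$.
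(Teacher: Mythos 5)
Your first two steps (radial symmetry, positivity via comparison with $z\equiv 0$ and the strong maximum principle, and strict radial decrease by differentiating the radial equation and applying the maximum principle to $u_r$) are essentially the paper's Lemma~\ref{lem:0.3}, and your Levine concavity argument showing $I\big(u(\cdot,t)\big)\ge 0$ for a global solution is exactly Lemma~\ref{lem:6.6}. The problem is the two steps you yourself flag as the heart of the matter. First, the passage from $I\big(u(\cdot,t)\big)\in[0,I(u_0)]$ to $\sup_{t>0}\norm[L^\infty]{u(\cdot,t)}<\infty$ does not go through by Gagliardo--Nirenberg absorption: the dissipation in the energy identity is $\int u_t^2\,dx$, not $\norm[L^2]{\nabla u}^2+\norm[L^2]{\nabla(u^2)}^2$, and to absorb $\int|u|^{p+1}dx$ into the gradient terms of $I$ you would need an a priori bound on some Lebesgue norm of $u$ (the equation conserves neither mass nor any $L^q$ norm), so the estimate cannot be closed; indeed $I$ is unbounded below on the natural energy space, which is precisely why blow-up occurs, so membership of $I(u(t))$ in a bounded interval is not coercive. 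The paper's Lemma~\ref{lem:6.7} obtains the $L^\infty$ bound by an entirely different mechanism: a blow-up rescaling $v_n(y,\tau)=M_n^{-1}u(x_n+M_n^{-(p-3)/2}y,\,t_n+M_n^{-(p-1)}\tau)$, the bound $\int_0^\infty\norm[L^2]{u_t}^2dt\le I(u_0)$ to kill the time derivative in the limit, and a Liouville theorem for $-v\Delta v^2=v^p$ (this is where $3\le p<(3N+2)/(N-2)$ is used).

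Second, your derivation of \ef{eq:1.4} from $\sup_{t>0}\norm[L^2]{u(\cdot,t)}<\infty$ and radial monotonicity is a correct implication, but the uniform $L^2$ bound is not available at that point: in the paper the uniform $H^1$ bound (Lemma~\ref{lem:2.1}(i)) is \emph{derived from} the decay \ef{eq:1.4}, by splitting $\int|u|^{p+1}dx$ into a far region where $\sup|u|^{p-1}$ is small and a bounded near region, so your order of deduction is circular relative to the only available proof of the $L^2$ bound. The paper instead proves the decay directly: Lemma~\ref{lem:6.2} shows $u\le K$ outside a large ball for any $K>((p+1)/2)^{1/(p-1)}$, by observing that otherwise radial monotonicity forces $u>K_0$ on a huge ball and one can place underneath it a Dirichlet subsolution with negative localized energy that blows up in finite time; Lemma~\ref{lem:6.4} then compares $u$ with the supersolution $Z(x)=w(|x|-R_0)$ built from a translated ground state $w$, using the strict gap $w(0)>((p+1)/2)^{1/(p-1)}$, to get exponential decay. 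That gap is exactly where $N\ge 2$ enters (for $N=1$ one has $w(0)=((p+1)/2)^{1/(p-1)}$ and the construction fails, see Remark~\ref{rem:6.5}); the fact that $N\ge 2$ plays no essential role anywhere in your argument is a further sign that the decisive estimates are missing.
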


\begin{theorem}[Uniqueness of limit]
	\label{thm:1.1}
Let $N \ge 1$ and let $u$ be a non-negative, bounded, globally defined 
solution to \ef{eq:1.1}-\ef{eq:1.2} which satisfies \ef{eq:1.4}. 
Then either $u(x,t) \to 0$ uniformly in $\RN$ as $t \to \infty$ or 
there is a positive solution $w$ of \ef{eq:1.3} such that $u(x,t) \to w(x)$ uniformly in $\RN.$
In addition, 
\begin{equation}
\label{eq:1.5}
\lim_{t \to \infty} 
\int_0^K \| u(\cdot,t+s)-w(\cdot) \|_{H^1(\RN)}^2 \,ds =0,
\end{equation}
for every $K>0$.
\end{theorem}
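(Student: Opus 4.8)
The plan is to exploit the gradient--flow structure of \ef{eq:1.1}. Setting
\[
E(u)=\half\int_{\RN}(1+2u^2)|\nabla u|^2\,dx+\half\int_{\RN}u^2\,dx-\frac{1}{p+1}\int_{\RN}|u|^{p+1}\,dx,
\]
a direct computation identifies \ef{eq:1.1} as the formal $\LN$-gradient flow $u_t=-E'(u)$, whose rest points are exactly the solutions of \ef{eq:1.3}. First I would establish, using the regularity of Section 2 to justify the integration by parts, the dissipation identity $\frac{d}{dt}E(u(t))=-\norm[\LN]{u_t(t)}^2$. Since $u$ is bounded and satisfies \ef{eq:1.4}, the energy stays bounded below along the trajectory, so $E(u(t))$ decreases to a limit $E_\infty$ and $\int_0^\infty\norm[\LN]{u_t(s)}^2\,ds<\infty$.

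Next I would pass to the $\omega$-limit set. Combining the uniform decay \ef{eq:1.4} with the parabolic a priori estimates of Section 2, the orbit $\set{u(\cdot,t):t\ge 1}$ is relatively compact in $C(\RN)\cap\HN(\RN)$; here \ef{eq:1.4} is what rules out loss of mass at spatial infinity and upgrades local to global compactness. Hence $\omega(u)$ is non-empty, compact, connected and invariant, and, by the strong $\HN$-compactness together with the uniform bounds of Section 2, $E$ passes to the limit so that $E\equiv E_\infty$ on $\omega(u)$. Choosing $t_n\to\infty$ with $\norm[\LN]{u_t(t_n)}\to 0$ (possible because the integral converges) and passing to the limit, every element of $\omega(u)$ is a non-negative solution of \ef{eq:1.3}.

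By the uniqueness results \cite{AW,GS}, a non-negative solution of \ef{eq:1.3} is either $0$ or a translate $w(\cdot-y)$ of the unique positive radial solution $w$, and $E(0)=0\neq E(w)$. As $E\equiv E_\infty$ on $\omega(u)$, this forces the dichotomy: if $E_\infty=0$ then $\omega(u)=\set{0}$ and, by compactness, $u(\cdot,t)\to 0$ uniformly; otherwise $E_\infty=E(w)$ and $\omega(u)$ is a compact connected subset of the translation orbit $\set{w(\cdot-y):y\in\RN}$, the decay \ef{eq:1.4} confining $y$ to a bounded set.

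It remains, in the non-trivial case, to reduce $\omega(u)$ to a single point, and here the non-degeneracy \cite{ASW1,S} is decisive. Since $\mathrm{Ker}(\mathcal L)$ in \ef{linearized} equals $\mathrm{span}\set{\partial_{x_1}w,\dots,\partial_{x_N}w}$, the tangent space of the orbit, a Lyapunov--Schmidt reduction near the orbit --- decomposing $u(\cdot,t)=w(\cdot-\xi(t))+v(\cdot,t)$ with $v(t)$ transverse to the kernel --- produces a Lojasiewicz-type inequality $\norm[\LN]{E'(u)}\ge c\,|E(u)-E(w)|^{1/2}$ in a neighborhood of the orbit; crucially this uses only non-degeneracy and not analyticity, which is precisely why the scheme of \cite{simon} can be bypassed. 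In the spirit of \cite{delpino-cort}, feeding this inequality into the dissipation identity and using $\norm[\LN]{u_t}=\norm[\LN]{E'(u)}$, the classical Lojasiewicz argument yields $\int_{t_0}^\infty\norm[\LN]{u_t}\,dt<\infty$ (a continuation argument keeping the trajectory in the neighborhood). Hence $u(\cdot,t)$ is Cauchy in $\LN$ and converges, and by parabolic smoothing and the compactness above the convergence is uniform, to a single equilibrium which we rename $w$. Finally \ef{eq:1.5} follows by combining this uniform convergence with the energy convergence $E(u(t))\to E(w)$ and the dissipation bound, which give the time-averaged $\HN$-convergence for every $K>0$. The principal obstacle is the quasilinear degeneracy: since $E$ is only lower semi-continuous on $\HN(\RN)$ and differentiable merely along bounded directions, the global compactness of the orbit and, above all, the Lyapunov--Schmidt analysis of the weighted operator $\mathcal L$ must be carried out within the class of bounded classical solutions, and it is there that the non-degeneracy hypothesis is genuinely used.
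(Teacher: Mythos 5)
Your overall strategy --- reading \ef{eq:1.1} as the $L^2$-gradient flow of the energy, locating the $\omega$-limit set inside $\{0\}\cup\{w(\cdot-y)\}$ via the dissipation identity and compactness, and then closing the argument with a Lojasiewicz--Simon inequality of exponent $1/2$ derived from non-degeneracy --- is a legitimate alternative in principle, and its first two stages coincide with Lemmas \ref{energy-est}, \ref{stab}, \ref{limite}, \ref{lem:2.1} and \ref{lem:3.1} of the paper. The gap is the pivotal third stage: the inequality $\| I'(u)\|_{L^2}\ge c\,|I(u)-I(w)|^{1/2}$ in a neighborhood of the translation orbit is asserted, not proved, and for this functional it is not a routine consequence of ${\rm Ker}(\mathcal L)={\rm span}\{\partial_{x_1}w,\dots,\partial_{x_N}w\}$. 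The abstract results that convert Morse--Bott non-degeneracy into an exponent-$1/2$ Lojasiewicz inequality require the energy to be $C^2$ on a Banach space $V$ with $I'\colon V\to V'$ and a Fredholm second differential; here $I$ is only lower semicontinuous on $H^1(\RN)$ and differentiable along bounded directions, $I'(u)$ carries the quasilinear terms coming from $u\Delta u^2$, and the Lyapunov--Schmidt reduction plus the remainder estimates would have to be carried out in $H^1\cap L^\infty$ (or $H^2$) with uniform bounds. That is exactly the hard analytic content, and none of it is supplied. A second, related gap is the continuation step: the Lojasiewicz inequality (once proved) holds in a neighborhood measured in a norm strong enough to control the quasilinear terms, whereas the Cauchy property you extract from $\int\|u_t\|_{L^2}\,dt<\infty$ is only in $L^2$; showing that the trajectory remains in the strong-norm neighborhood requires quantitative smoothing/interpolation estimates that are only gestured at.

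For comparison, the paper never proves (and never needs) a Lojasiewicz inequality. It follows the scheme of \cite{delpino-cort,CGH}: with $\eta(y,t)=\big(\int_0^T\|u(\cdot,s+t)-w(\cdot+y)\|_{H^1}^2\,ds\big)^{1/2}$ it establishes a discrete contraction (Proposition \ref{prop:4.2}): if $\eta(y_n,t_n)\to0$, then after a fixed time $T$ and a small translation $z_n$ of the target one has $\eta(y_n+z_n,t_n+T)\le\tfrac12\,\eta(y_n,t_n)$. This is obtained by linearizing the \emph{equation} (not the energy) around $w_0$: the normalized differences $\phi_n=(u_n-w_n)/\eta_n$ converge to a solution of $\phi_t+\mathcal L_0\phi=0$ (Lemma \ref{lem:5.2}); the unstable mode is killed by the monotonicity of $I$ (Lemma \ref{lem:5.5}); the kernel modes are absorbed into the translation $z_n$; and the spectral gap $\bar\mu>0$ --- the only place non-degeneracy enters --- gives exponential decay of the remainder (Lemma \ref{lem:5.4}). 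Iterating the contraction (Corollary \ref{cor:4.3}, Lemmas \ref{lem:4.4}--\ref{lem:4.5}) yields \ef{eq:1.5} and the uniqueness of the limit. If you want to pursue your route, the honest task is to prove the quasilinear Lojasiewicz inequality; the paper's detour exists precisely because that step is delicate here.
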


\begin{theorem}[Bifurcation]
	\label{thm:1.3}
Let $N \ge 2$ and let $\varphi_0 \in C_0^{\infty}(\R^N)$ be 
non-negative, radially non-increasing and not identically equal to zero.
If $p=3$, assume furthermore that
$$
\int_{\RN} \Big(\varphi_0^2 |\nabla \varphi_0|^2
-\frac{1}{4} |\varphi_0|^{4} \Big) \,dx <0.
$$ 
Then there exists $\lambda_0>0$ such that the solution $u$ 
to \ef{eq:1.1}-\ef{eq:1.2} with $u_0=\lambda \varphi_0$ satisfies
\begin{itemize}
\item[\rm(i)] If $\lambda<\lambda_0$, 
then $u(x,t)$ goes to zero as $t \to \infty$ uniformly in $\RN$.
\item[\rm(ii)] If $\lambda=\lambda_0$, 
then $u(x,t)$ converges to a positive solution $w$ of \ef{eq:1.3} uniformly in $\RN$.
\item[\rm(iii)] If $\lambda>\lambda_0$, then $u(x,t)$ blows up in finite time.
\end{itemize}
\end{theorem}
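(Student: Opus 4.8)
The plan is to exploit the gradient-flow structure of \ef{eq:1.1} together with a comparison principle and the convergence dichotomy furnished by Theorems \ref{thm:1.2} and \ref{thm:1.1}. Write the energy
\begin{equation*}
E(u)=\frac12\int_{\RN}(1+2u^2)|\nabla u|^2\,dx+\frac12\int_{\RN}u^2\,dx-\frac{1}{p+1}\int_{\RN}|u|^{p+1}\,dx,
\end{equation*}
so that \ef{eq:1.1} is the $L^2$-gradient flow $u_t=-E'(u)$ and $t\mapsto E(u(\cdot,t))$ is non-increasing along trajectories. Since $\lambda\mapsto\lambda\varphi_0$ is pointwise increasing and the data are non-negative and radially non-increasing, the comparison principle for \ef{eq:1.1} (established in Section 2) gives the ordering $u^{\lambda_1}(\cdot,t)\le u^{\lambda_2}(\cdot,t)$ whenever $0<\lambda_1\le\lambda_2$, strict for $t>0$ by the strong maximum principle. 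I would then partition $(0,\infty)$ into $\mathcal G=\{\lambda: u^\lambda\to0 \text{ uniformly}\}$, $\mathcal W=\{\lambda: u^\lambda\to w\}$ and $\mathcal B=\{\lambda: u^\lambda \text{ blows up in finite time}\}$. By Theorem \ref{thm:1.2} every globally defined trajectory is bounded and satisfies \ef{eq:1.4}, so Theorem \ref{thm:1.1} forces each such trajectory to converge to $0$ or to the unique positive radial steady state $w$; hence these three sets indeed cover $(0,\infty)$.

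Next I would locate the endpoints. For $\lambda$ small, $\lambda\varphi_0$ is small in $H^1(\RN)$ and an energy/sub-solution argument places it in the basin of attraction of $0$, so $\mathcal G\supset(0,\delta)$. For $\lambda$ large I would show $E(\lambda\varphi_0)<0$: expanding in $\lambda$, the dominant term is $\lambda^{p+1}$ when $p>3$, and is $\lambda^4\int_{\RN}(\varphi_0^2|\nabla\varphi_0|^2-\tfrac14|\varphi_0|^4)\,dx$ when $p=3$, which is exactly where the extra hypothesis for $p=3$ guarantees a negative leading coefficient. Once $E(\lambda\varphi_0)<0$, a concavity (Levine-type) argument applied to $t\mapsto\int_{\RN}u^2\,dx$, using $E(u(\cdot,t))\le E(u_0)<0$ together with the dissipation identity, forces finite-time blow-up, so $\mathcal B\supset(\Lambda,\infty)$ for some $\Lambda$. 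I would then set $\lambda_0:=\sup\{\lambda:\ u^\mu \text{ is global for every }\mu\le\lambda\}\in(0,\infty)$; monotonicity in $\lambda$ makes $\mathcal G$ and $\mathcal B$ intervals, so $\mathcal G=(0,\lambda_0)$ and $\mathcal B=(\lambda_0,\infty)$ up to endpoints.

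Finally I would pin down the behavior exactly at the threshold. First, $\mathcal W$ collapses to a single point: if $\lambda\in\mathcal W$, then for every $\mu<\lambda$ the strict ordering $u^\mu<u^\lambda$ and Theorem \ref{thm:1.1} force the $\omega$-limit of $u^\mu$ to lie in $\{0,w\}$, while the value $w$ is excluded by strict comparison against $u^\lambda\to w$; hence $\mu\in\mathcal G$, so $\mathcal W\subset\{\lambda_0\}$. It remains to prove $\lambda_0\in\mathcal W$. Openness of $\mathcal B$ (via continuous dependence: a solution that is close on $[0,T']$ to one about to blow up also blows up) rules out $\lambda_0\in\mathcal B$, so $u^{\lambda_0}$ is global, hence bounded and subject to \ef{eq:1.4} by Theorem \ref{thm:1.2}; openness of $\mathcal G$ (once a trajectory enters the small basin of $0$, nearby trajectories do too) rules out $\lambda_0\in\mathcal G$. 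Therefore $u^{\lambda_0}$ converges to $w$ by Theorem \ref{thm:1.1}, which is (ii), while $\lambda<\lambda_0$ gives (i) and $\lambda>\lambda_0$ gives (iii). The main obstacle is establishing the two openness properties, namely that finite-time blow-up and uniform decay to $0$ are both stable under perturbation of $\lambda$, together with securing the \emph{finite-time} (rather than merely infinite-time) nature of blow-up for the quasi-linear operator, where the extra gradient term $u\Delta u^2$ must be controlled in the concavity estimate.
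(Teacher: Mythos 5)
Your overall architecture (the trichotomy $\mathcal G\cup\mathcal W\cup\mathcal B=(0,\infty)$, smallness of $\lambda$ giving decay, negativity of the energy for large $\lambda$ plus a Levine concavity argument giving blow-up, and openness of the decay set via the constant supersolution below $1$) matches the paper's Steps 1--3. But there is a genuine gap at the crucial step, namely the proof that $\mathcal W$ is a single point. You argue that if $\lambda\in\mathcal W$ and $\mu<\lambda$, then the strict ordering $u^\mu(\cdot,t)<u^\lambda(\cdot,t)$ together with $u^\lambda\to w$ ``excludes'' $u^\mu\to w$. This does not follow: two trajectories that are strictly ordered for every finite $t$ can perfectly well converge to the same steady state as $t\to\infty$ (already for the ODE $\dot x=-x$ two ordered solutions share the limit $0$). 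Strict comparison alone therefore cannot rule out that a whole interval of $\lambda$'s converges to $w$. The paper closes this gap with a spectral instability argument: assuming two thresholds $\lambda_0<\lambda_1$ in $\mathcal W$, it takes $\lambda\in(\lambda_0,\lambda_1)$, supposes $I(u_\lambda(\cdot,t))\ge 0$ for all $t$ so that $u_\lambda$ is global and (by comparison and uniqueness of the positive radial steady state) also converges to $w$, writes the equation satisfied by $\phi=u_\lambda-u_{\lambda_0}$ as a perturbation of the linearized operator $\mathcal L$ at $w$ from \eqref{linearized}, and tests against the principal eigenfunction $\psi_1$ whose eigenvalue $\mu_1$ is \emph{negative}. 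Since $\phi>0$, the projection $\int\phi\psi_1$ then grows exponentially, contradicting $\phi\to0$. It is precisely the instability of $w$ (i.e.\ $\mu_1<0$, tied to the non-degeneracy results for \eqref{eq:1.3}) that collapses $\mathcal W$ to a point; this input is absent from your argument.

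A secondary weak point: your openness of $\mathcal B$ via ``a solution close on $[0,T']$ to one about to blow up also blows up'' is not a valid continuous-dependence statement (closeness on a finite time interval does not by itself propagate blow-up). The paper instead characterizes finite-time blow-up by the open condition $I(u_\lambda(\cdot,t_0))<0$ for some $t_0$ (Lemmas \ref{lem:6.6}--\ref{lem:6.7}) and uses continuity of $\lambda\mapsto I(u_\lambda(\cdot,t_0))$; this also resolves the issue you flagged about controlling the quasi-linear term in the concavity estimate, since for $p\ge3$ the extra term $(p-3)\int_{\RN}u^2|\nabla u|^2\,dx$ has a favorable sign.
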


\medskip
\noindent
{\bf Remark.} Here we collect some remarks on the main results.

\noindent
(i) In Theorem \ref{thm:1.1}, we don't need any symmetric assumptions on 
the solution. However by the result in \cite{P}, we can show that our global solution is
{\em asymptotically symmetric}, that is, it has a common center of symmetry for the 
elements of the $\omega$-limit.
See \cite{BJP, MSS} for related results.

\noindent
(ii) To prove the uniform decay condition \ef{eq:1.4} in Theorem \ref{thm:1.2}, 
we have to assume that $u_0$ is radially non-increasing. 
This assumption is used to obtain a universal bound near infinity, see Remark \ref{rem:6.3}.
We believe that this is technical, but we don't know how to remove it at present.

\noindent
(iii) By a recent result in \cite{ASW2}, the non-degeneracy of the positive radial solution to 
\ef{eq:1.3} holds even if $1<p<3$. On the other hand, 
the condition $p \ge 3$ appears in various situations, 
especially in the proof of Theorem \ref{thm:1.2}. 
Although the nonlinear term $|u|^{p-1}u$ is superlinear even when $1<p<3$, 
problem \ef{eq:1.1} has a {\em sublinear structure} due to the term $u \Delta u^2$, 
causing our arguments to completely fail.

\noindent
(iv) In the proof of Theorem \ref{thm:1.2}, we also require that $N \ge 2$.
This is to construct a suitable supersolution, see Remark \ref{rem:6.5}.

\medskip
\noindent
As we will see in Section 2, our problem is uniformly parabolic, yielding that basic tools 
(energy estimate, Schauder estimate, Comparison Principle, etc) are available. 
Especially some proofs work in the spirit of those of \cite{delpino-cort} 
for semi-linear problems. 
However quite often the semi-linear techniques fail to work, 
especially in the construction of suitable sub-solutions (see e.g.\ Lemma~\ref{lem:6.2}). 
To compare the dynamical behavior of solutions for our quasi-linear parabolic problem 
with that for the corresponding semi-linear one, for $\kappa>0$, we consider the problem
\begin{equation}
\label{eq:7.3}
\left\{
\begin{array}{rl}
u_t-\Delta u-\kappa u\Delta u^2 +u=|u|^{p-1}u &\ \hbox{in} \ \RN \times (0,\infty),\\
u(x,0)=\lambda \varphi_0(x) &\ \hbox{in} \ \RN
\end{array}
\right.
\end{equation}
and the corresponding semi-linear parabolic problem:
\begin{equation}\label{eq:7.4}
\left\{
\begin{array}{rl}
u_t-\Delta u+u=|u|^{p-1}u &\ \hbox{in} \ \RN \times (0,\infty),\\
u(x,0)= \lambda \varphi_0(x) &\ \hbox{in} \ \RN.
\end{array}
\right.
\end{equation}
The stationary problem associated with \ef{eq:7.4} is given by
\begin{equation}\label{eq:7.5}
-\Delta w+w=|w|^{p-1}w \ \hbox{in} \ \RN, \quad w(x) \to 0 \ \hbox{as} \ |x| \to \infty.
\end{equation}
It is well-known that problem \ef{eq:7.5} has a unique positive solution 
for $1<p<(N+2)/(N-2)$ if $N \ge 3$ and $1<p<\infty$ if $N=1,2$. 
Now let $\lambda_0(\kappa)>0$ be a constant obtained by applying 
Theorem \ref{thm:1.3} to \ef{eq:7.3}. 
When $3 \le p< (N+2)/(N-2)$, both $\lambda_0(\kappa)$ and $\lambda_0(0)$ are defined and
\begin{equation}\label{eq:7.6}
\lambda_0(0) < \lambda_0(\kappa) \quad \hbox{for all} \ \kappa>0.
\end{equation}
In fact we claim that $\lambda_0(\kappa_0) < \lambda_0(\kappa_1)$ for all $\kappa_0<\kappa_1$. 
Defining $I_{\kappa}$ by
$$
I_{\kappa}(u)=\frac{1}{2} \int_{\RN} \big( (1+2\kappa u^2) |\nabla u|^2 +u^2 \big) \,dx
-\frac{1}{p+1} \int_{\RN} |u|^{p+1} \,dx,
$$
it follows that if $I_{\kappa_0}(u) \ge 0$ for all $u \in C_0^{\infty}(\RN)$, then
$I_{\kappa_1}(u) \ge 0$ for all $u \in C_0^{\infty}(\RN)$. 
Thus by Lemmas \ref{lem:6.6}-\ref{lem:6.7} and by the definition of $\lambda_0(\kappa)$, 
the claim follows. 
Inequality \ef{eq:7.6} shows that there exist initial values $u_0$ such that
the corresponding solution to \eqref{eq:7.3} is globally defined, 
but that of the semi-linear problem \eqref{eq:7.4} blows up in finite time.
In other words, the quasi-linear term $u\Delta u^2$ prevents blow-up of solutions. 
This kind of {\em stabilizing effects} has 
been observed for the quasi-linear Schr\"odinger equation \ef{eq:1.0}, see e.g.\ \cite{BEPZ,CJS}.

\medskip
\noindent
{\bf Plan of the paper.} 
In Section 2, we state several preparatory results.
In Subsection 2.1, we establish the local existence of classical solutions of \ef{eq:1.1} 
and give qualitative properties of classical solutions. 
Subsection 2.2 concerns with stability estimates for global solutions.
We prove uniform estimates of global solutions in Subsection 2.3.
We state technical results about uniqueness of limit in Subsection 2.4.
In Section 3, we will prove the main results of the paper. 

\medskip
\noindent
{\bf Notations.} {\small 
For any $p\in [1,\infty)$ and a domain $U \subset \RN$, 
the space $L^p(U)$ is endowed with the norm 
$$
\|u\|_{L^p(U)}=\Big(\int_{U} |u|^p \,dx \Big)^{1/p}\!\!\!.
$$
$(\cdot, \cdot)_{L^2(U)}$ denotes the standard inner product in $L^2(U)$.
The Sobolev space $H^1(U)$ is endowed with the standard norm 
$$
\|u\|_{H^1(U)}=\Big(\int_{U} \big( |\nabla u|^2 +|u|^2 \big) \,dx \Big)^{1/2}\!\!.
$$ 
The higher order spaces $H^m(U)$ are endowed with the standard norm. 
The space $C^k \big( (0,T),H^m(U) \big)$ denotes the functions with $k$ time derivatives 
which belong to $H^m(U)$. When $U=\RN$, we may write $\| \cdot \|_{H^m(\RN)}=\| \cdot \|_{H^m}$. 
The symbols ${\partial u}/{\partial x_i}$, $\partial^2 u/{\partial x_i \partial x_j}$ 
and $u_t$ denote, respectively, the first and second order space derivatives and 
the time derivative of $u$. 
For non-negative integer $m$, $D^{m}u$ denotes the set of all partial derivatives of order $m$. 
$C^\infty_0(\R^N)$ denotes the space of compactly supported smooth functions. 
The notation ${\rm span}\{w_1,\ldots, w_k\}$ denotes the vector space generated by the vectors $\{w_1,\ldots, w_k\}$.
We denote by $\Omega(u)$ the $\omega$-limit set of $u$, namely the set
$$
\Omega(u):=\big\{ w\in H^1(\R^N) : \, u(\cdot,t_n) \to w \ \hbox{uniformly in $\RN$ as $n\to\infty$, for some} \ t_n \to \infty \big\}.
$$
The symbol $B(x_0,R)$ denotes a ball in $\R^N$ of center $x_0$ and with radius $R$. 
The complement of a measurable set $E\subset\R^N$ will be denoted by $E^c$.
}

\section{Preparatory results}

\subsection{Local existence and basic properties}
In this subsection, we prove the local existence of classical solutions of
\ef{eq:1.1}-\ef{eq:1.2} and provide also some qualitative properties.
First we observe that \ef{eq:1.1} can be written as $L(u)=0$ where
\begin{equation*}
L(u) = (1+2u^2)\Delta u+2u |\nabla u|^2 -u+|u|^{p-1}u - u_t
=: F\left( u, \frac{\partial u}{\partial x_i}, \frac{\partial^2 u}{\partial x_i \partial x_j} \right)-u_t, \nonumber 
\end{equation*}
where we have set
$$
F(u, p_i, r_{ij})=
\sum_{i,j=1}^N (1+2u^2) \delta_{ij} r_{ij}
+2u \sum_{i=1}^N p_i^2 -u +|u|^{p-1}u.$$
Then one has $\frac{\partial F}{\partial r_{ij}}=(1+2u^2)\delta_{ij}$ and hence
$$
\sum_{i,j=1}^N \frac{\partial F}{\partial r_{ij}}
\left( u, \frac{\partial u}{\partial x_i}, \frac{\partial^2 u}{\partial x_i \partial x_j} \right) 
\xi_i \xi_j = (1+2u^2)| \xi|^2 \ge |\xi|^2,
$$
for all $\xi \in \RN \setminus \{0\}$ and $u\in \R$.
This implies that $F$ is uniformly elliptic and the nonlinear operator $L$ is
(strongly) parabolic with respect to any $u$. 
We also note that $L$ can be written by the divergence form
$$
L(u)={\rm div} A(u,\nabla u) +B(u,\nabla u) -u_t,$$
\begin{equation} \label{divergence}
A(u, {\bf p})=(1+2u^2) {\bf p}, \quad
B(u, {\bf p})=-(1+2 | {\bf p}|^2)u+|u|^{p-1}u.
\end{equation}
Then we have the following result on the local existence of classical solutions 
whose proof is based on a {\it modified Galerkin method} as in \cite{T}.

\begin{lemma}[Local existence]
	\label{lem:0.1}
Let $u_0 \in C_0^{\infty}(\RN)$. 
Then there exist $T_0=T_0(u_0) \in (0,\infty]$ and a unique classical solution 
$u(x,t)$ of \ef{eq:1.1}-\ef{eq:1.2} satisfying 
\begin{align}\label{eq:0.1}
\sup_{t\in (0,T_0)} \| D^ku(\cdot ,t) \|_{L^{\infty}(\RN)} < \infty 
\ \hbox{for} \ |k| \le 2, \\
\label{eq:0.2}
u(x,t) \to 0 \ \hbox{as} \ |x| \to \infty \ \hbox{for each} \ t\in (0,T_0).
\end{align}
\end{lemma}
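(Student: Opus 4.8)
The strategy is to build $u$ as the limit of a Galerkin scheme adapted to the whole space and to the quasi-linear structure, exploiting that the principal coefficient $1+2u^2$ is bounded below by $1$, and then to upgrade the limit to a classical solution via parabolic regularity. Two structural facts will be used throughout: the uniform ellipticity established above, which keeps the dissipative part of every energy estimate coercive, and the hypothesis $p\ge 3$, which makes $s\mapsto|s|^{p-1}s$ of class $C^2$, so that classical solutions are meaningful and the nonlinear terms can be differentiated in the estimates.

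First I would fix a smooth orthonormal basis $\{w_j\}_{j\ge 1}$ of $L^2(\RN)$ consisting of rapidly decaying functions, for instance the Hermite functions, which lie in the Schwartz class and are orthogonal simultaneously in $L^2(\RN)$ and in every $H^m(\RN)$. Seeking approximate solutions $u_n(x,t)=\sum_{j=1}^n g_j^n(t)\,w_j(x)$ and projecting \eqref{eq:1.1} onto $V_n:=\mathrm{span}\{w_1,\dots,w_n\}$ reduces the problem to an \emph{explicit} system of ordinary differential equations $\dot g^n=\mathcal F(g^n)$ for the coefficient vector $g^n=(g_1^n,\dots,g_n^n)$; its right-hand side is locally Lipschitz because the basis functions are smooth and $s\mapsto|s|^{p-1}s$ is $C^2$, so the Cauchy--Lipschitz theorem furnishes a unique $g^n$ on a maximal interval and hence an approximate solution $u_n$.

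The core of the argument is a hierarchy of a priori bounds, uniform in $n$, on a time interval $(0,T_0)$ depending only on $u_0$. Fixing an integer $m>N/2+2$, one starts from the $L^2$ identity obtained by testing against $u_n$ (whose dissipative part is coercive thanks to the ellipticity lower bound) and then estimates successively higher Sobolev norms by testing against suitable high-order terms. The quasi-linear contribution $2u_n|\nabla u_n|^2$ and the term $(1+2u_n^2)\Delta u_n$, together with the nonlinearity $|u_n|^{p-1}u_n$, are absorbed through Moser- and Gagliardo--Nirenberg-type inequalities, so that one closes a single differential inequality of the form $\tfrac{d}{dt}\,\| u_n(\cdot,t)\|_{H^m}^2 \le \Phi\big(\| u_n(\cdot,t)\|_{H^m}\big)$ with $\Phi$ independent of $n$. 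Comparison with the scalar ODE $\dot Y=\Phi(Y)$, $Y(0)=\|u_0\|_{H^m}^2$, then yields a common existence time $T_0$ on which all the norms $\| u_n\|_{H^m}$ stay bounded.

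With these bounds I would extract, via the Aubin--Lions lemma, a subsequence converging to a limit $u$, strongly enough in $H^{m-1}_{\mathrm{loc}}(\RN)$ (hence in $C^1_{\mathrm{loc}}$, since $m-1>N/2+1$) to pass to the limit in every term, including the quasi-linear one; the limit solves \eqref{eq:1.1}--\eqref{eq:1.2} in the weak sense. Since $m>N/2+2$, the embedding of $H^m(\RN)$ into the $C^2$ functions vanishing at infinity delivers at once the uniform bound \eqref{eq:0.1} and the decay \eqref{eq:0.2}, and it places the coefficients of the equation in a regularity class where parabolic Schauder theory applies, promoting $u$ to the classical solution $u\in C([0,T_0),C^2(\RN))\cap C^1((0,T_0),C(\RN))$. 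Uniqueness is then routine: subtracting the equations for two solutions, testing the difference against itself, and using the uniform ellipticity with Gronwall's inequality forces them to coincide. I expect the third step to be the main obstacle, since closing the higher-order estimates in the presence of the quasi-linear term requires careful bookkeeping of the many derivative products generated by differentiating $(1+2u^2)\Delta u+2u|\nabla u|^2$, and it is exactly there that the ellipticity lower bound is indispensable for dominating the top-order dissipation.
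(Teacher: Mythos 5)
Your proposal is correct and follows essentially the same route as the paper, which simply invokes the modified Galerkin method of \cite[Proposition 7.5]{T} together with energy estimates in $H^m(\RN)$ for $m>\frac{N}{2}+2$ and the Sobolev--Morrey embeddings to obtain \eqref{eq:0.1}--\eqref{eq:0.2}; your sketch is a reasonable unpacking of that citation. (Only a cosmetic point: Hermite functions are not orthogonal in every $H^m$, but Galerkin schemes do not require this, so nothing is lost.)
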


\begin{proof}
Since the operator $L$ is strongly parabolic, for any $u_0 \in C_0^{\infty}(\RN)$, 
there exist a (small) positive number $T_0=T_0(u_0)$ and a unique solution 
$u(x,t)$ of \ef{eq:1.1}-\ef{eq:1.2} satisfying
$$
u\in C \left( [0,T_0), H^m(\RN) \right)
\cap C^1 \left( (0,T_0),H^{m-2}(\RN) \right)
\ \hbox{for any} \ m\in \N \ \hbox{with} \ m>\frac{N}{2}+2
$$
by using a suitable approximation and applying the energy estimate,
see \cite[Proposition 7.5]{T}.
Then by the Sobolev embedding $H^m(\RN) \hookrightarrow C^2(\RN)$
and $H^{m-2}(\RN) \hookrightarrow C(\RN)$ for $m>\frac{N}{2}+2$, 
$u$ is a classical solution of \ef{eq:1.1}-\ef{eq:1.2}. 
Moreover by the Sobolev and Morrey inequalities, \ef{eq:0.1} and \ef{eq:0.2} also hold. 
\end{proof}

\noindent
From \ef{divergence}, we can also obtain the local existence of classical solutions
by applying the Schauder estimate, see \cite[Theorem 8.1, p.495]{LSU}.
We note that $T_0$ is not the maximal existence lifespan, 
but the local solution $u(x,t)$ can be extended beyond $T_0$ 
as long as $\sup \| u(\cdot,t)\|_{C^2(\RN)}$ is bounded. 
Next we prepare the following Comparison Principle for later use. 
For this statement, we refer the reader to \cite[Section 7, Theorem 12, p.187]{PW}.

\begin{lemma}[Comparison principle]
	\label{lem:0.2}
Let $U$ be a bounded domain in $\RN$ and $T>0$.
Suppose that $u$ is a solution of $L(u)=f(x,t)$ in $U \times (0,T]$
satisfying the initial boundary conditions:
\begin{align*}
u(x,t) = g_1(x,t) \quad &\hbox{on} \ \partial U \times (0,T),\\
u(x,0) = g_2(x) \quad &\hbox{in} \ U.
\end{align*}
Assume that $z(x,t)$ and $Z(x,t)$ satisfy the inequalities:
\begin{align*}
L(Z) \le f(x,t) \le L(z) &\ \hbox{in} \ U \times (0,T],\\
z(x,t) \le g_1(x,t) \le Z(x,t) &\ \hbox{on} \ \partial U \times (0,T),\\
z(x,0) \le g_2(x) \le Z(x,0) &\ \hbox{in} \ U.
\end{align*}
If $L$ is parabolic with respect to the functions
$\theta u+(1-\theta)z$ and $\theta u+(1-\theta)Z$ for any $\theta \in [0,1]$, 
then it follows that
$$
z(x,t) \le u(x,t) \le Z(x,t) \ \hbox{in} \ U \times (0,T].$$
\end{lemma}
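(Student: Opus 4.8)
The plan is to linearize the fully nonlinear operator $L$ along the straight segments joining $u$ with $Z$ (respectively with $z$) and thereby reduce the comparison to the classical weak maximum principle for a \emph{linear} parabolic operator with bounded coefficients. I will only explain the upper bound $u \le Z$; the lower bound $z \le u$ is obtained by the identical argument run along the segment $\theta u + (1-\theta) z$, so I will just indicate the change at the end.

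First I would set $w := u - Z$ and rewrite $L(u) - L(Z)$ as a linear expression in $w$. Writing $L(u) = F\big(u, u_{x_i}, u_{x_i x_j}\big) - u_t$ and applying the fundamental theorem of calculus to $\theta \mapsto F\big(v_\theta, (v_\theta)_{x_i}, (v_\theta)_{x_i x_j}\big)$ on $[0,1]$, where $v_\theta := \theta u + (1-\theta) Z$, the difference $F(u,\ldots) - F(Z,\ldots)$ equals $\sum_{i,j} a_{ij}\, w_{x_i x_j} + \sum_i b_i\, w_{x_i} + c\, w$, with
\[
a_{ij} = \int_0^1 \frac{\partial F}{\partial r_{ij}}(v_\theta,\ldots)\, d\theta, \qquad
b_i = \int_0^1 \frac{\partial F}{\partial p_i}(v_\theta,\ldots)\, d\theta, \qquad
c = \int_0^1 \frac{\partial F}{\partial u}(v_\theta,\ldots)\, d\theta .
\]
Since $u_t - Z_t = w_t$, the function $w$ then solves the linear parabolic differential inequality
\[
\mathcal{M} w := \sum_{i,j=1}^N a_{ij}\, w_{x_i x_j} + \sum_{i=1}^N b_i\, w_{x_i} + c\, w - w_t = L(u) - L(Z) \ge 0 ,
\]
the sign coming from the hypothesis $L(Z) \le f = L(u)$.

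Next I would record the qualitative properties of the coefficients. Because $\partial F / \partial r_{ij} = (1+2(\cdot)^2)\,\delta_{ij}$, the assumed parabolicity of $L$ with respect to each intermediate function $v_\theta = \theta u + (1-\theta) Z$ guarantees that every integrand is positive definite; integrating over $\theta$ yields $a_{ij} \ge \delta_{ij}$, so $\mathcal{M}$ is uniformly elliptic in its principal part. Moreover $u$ and $Z$ are classical, hence $C^{2,1}$ up to the boundary with derivatives bounded on the compact cylinder $\overline{U}\times[0,T]$, and $|s|^{p-1}s$ is $C^1$ for $p \ge 3$; therefore $a_{ij}, b_i, c$ are bounded on $\overline{U}\times[0,T]$. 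On the parabolic boundary the hypotheses give $w = g_1 - Z \le 0$ on $\partial U \times (0,T)$ and $w(\cdot,0) = g_2 - Z(\cdot,0) \le 0$ in $U$.

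The one genuinely delicate point — and the main obstacle — is that the zeroth-order coefficient $c$ need not be nonpositive, so the maximum principle cannot be applied to $\mathcal{M}$ as it stands. I would remove this by the exponential substitution $w = e^{\gamma t}\eta$ with $\gamma > \sup_{\overline{U}\times[0,T]} c$: a direct computation gives
\[
e^{-\gamma t}\,\mathcal{M} w = \sum_{i,j=1}^N a_{ij}\, \eta_{x_i x_j} + \sum_{i=1}^N b_i\, \eta_{x_i} + (c-\gamma)\,\eta - \eta_t \ge 0 ,
\]
now with nonpositive zeroth-order term $c - \gamma < 0$. The classical weak maximum principle for uniformly parabolic operators with nonpositive zeroth-order coefficient then applies: a subsolution $\eta$ that is $\le 0$ on the parabolic boundary cannot attain a positive interior maximum, hence $\eta \le 0$ throughout $U \times (0,T]$, so $w \le 0$ and $u \le Z$. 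Repeating the whole argument with $\widetilde{w} := z - u$ linearized along $\theta u + (1-\theta) z$ gives $\mathcal{M}'\widetilde{w} = L(z) - L(u) \ge 0$ together with $\widetilde{w} \le 0$ on the parabolic boundary, whence $z \le u$, and the two inequalities together complete the proof.
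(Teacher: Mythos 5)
Your proof is correct. The paper itself does not prove this lemma; it simply cites \cite[Section 7, Theorem 12, p.~187]{PW}, and your argument --- linearizing $L(u)-L(Z)$ along the segment $\theta u+(1-\theta)Z$ via the fundamental theorem of calculus, noting that the assumed parabolicity along the segment makes the resulting linear operator uniformly parabolic with bounded coefficients, and then killing the zeroth-order term with the substitution $w=e^{\gamma t}\eta$ before applying the weak maximum principle --- is precisely the standard proof given in that reference, so there is nothing to add.
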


\noindent
We recall that $z$ and $Z$ are called a {\it subsolution} 
and a {\it supersolution} of $L(u)=f$ respectively.
By applying Lemma \ref{lem:0.2}, we provide some qualitative properties
for solutions of \ef{eq:1.1}-\ef{eq:1.2}.

\begin{lemma}[Radially decreasing flows]
	\label{lem:0.3}
Suppose that $u_0 \in C_0^{\infty}(\RN)$ is non-negative and not identically zero. 
Then the corresponding solution $u$ is positive for
all $x\in \RN$ and $t\in (0,T_0)$.
Moreover if $u_0(x)=u_0(|x|)$ and $u_0'(r) \le 0$ for all $r \ge 0$, 
then $u(x,t)$ is also radial and 
$u_r(r,t) < 0$ for all $r \ge 0$ and $t\in (0,T_0)$.
\end{lemma}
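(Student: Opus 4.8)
The plan is to prove the three assertions (positivity, radial symmetry, and strict radial monotonicity) in that logical order, using the Comparison Principle of Lemma~\ref{lem:0.2} together with the strong maximum principle, carefully exploiting the structure \eqref{divergence} of the operator $L$. First I would establish \emph{positivity}. Note that $u\equiv 0$ is a stationary solution of $L(u)=0$, and that $u_0\ge 0$ is not identically zero. To apply Lemma~\ref{lem:0.2}, I would work on large balls $U=B(0,R)$, compare $u$ with the subsolution $z\equiv 0$, and check the parabolicity hypothesis: since $L$ is strongly parabolic with respect to \emph{any} function (as shown just before Lemma~\ref{lem:0.1}, the ellipticity constant is $1+2u^2\ge 1$), the convex-combination condition on $\theta u+(1-\theta)z$ is automatic. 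This yields $u\ge 0$ on each $B(0,R)\times(0,T_0)$, hence everywhere. To upgrade $u\ge 0$ to $u>0$, I would invoke the strong maximum principle for the linear uniformly parabolic operator obtained by freezing coefficients along $u$: writing the equation in the form $u_t=(1+2u^2)\Delta u + 2u|\nabla u|^2 + c(x,t)u$ with $c(x,t)=-1+|u|^{p-1}$ bounded on compact time intervals by \eqref{eq:0.1}, the term $2u|\nabla u|^2$ can be absorbed since $u\ge 0$, and the strong maximum principle forbids an interior zero unless $u\equiv 0$.

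Next I would prove \emph{radial symmetry} under the hypotheses $u_0(x)=u_0(|x|)$ and $u_0'\le 0$. The cleanest route is uniqueness: for any rotation $\mathcal R\in O(N)$, the function $u^{\mathcal R}(x,t):=u(\mathcal R x,t)$ solves the same problem \eqref{eq:1.1} because the operator $L$ is rotation-invariant (the coefficients $1+2u^2$, $2u$ depend only on $u$ and $|\nabla u|^2$, both rotation-invariant), and $u^{\mathcal R}$ has the same initial datum $u_0(\mathcal R x)=u_0(|x|)=u_0(x)$. By the uniqueness part of Lemma~\ref{lem:0.1}, $u^{\mathcal R}\equiv u$ for every $\mathcal R$, so $u(\cdot,t)$ is radial for each $t\in(0,T_0)$.

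Finally, for the \emph{strict monotonicity} $u_r(r,t)<0$, I would use the moving-plane / reflection method in parabolic form, which is the step I expect to be the main obstacle. Fix a direction, say $e_1$, and for $\mu>0$ set $u_\mu(x,t):=u(2\mu - x_1, x', t)$, the reflection of $u$ across the hyperplane $\{x_1=\mu\}$. Since $L$ is invariant under this reflection (again by the rotation/reflection invariance of the coefficients), $u_\mu$ solves \eqref{eq:1.1} as well, and the difference $v:=u-u_\mu$ satisfies a linear uniformly parabolic equation of the form $v_t = a^{ij}(x,t)\partial_{ij}v + b^i(x,t)\partial_i v + c(x,t)v$ on the half-space $\{x_1>\mu\}$, where the coefficients arise from the quasi-linear structure via the mean-value theorem applied to $u,\nabla u$ and are bounded on compact time intervals by \eqref{eq:0.1}. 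On the boundary $\{x_1=\mu\}$ one has $v=0$, at infinity $v\to 0$ by \eqref{eq:0.2}, and at $t=0$ the monotonicity $u_0'\le 0$ gives $v(\cdot,0)=u_0(x)-u_0(2\mu-x_1,x')\le 0$ in $\{x_1>\mu\}$. Applying the maximum principle (in the unbounded-domain version, exhausting by balls and using the decay \eqref{eq:0.2} to control the boundary terms) yields $v\le 0$, i.e.\ $u(x,t)\le u_\mu(x,t)$ for all $\mu>0$ and $x_1>\mu$, which translates into $u$ being non-increasing in $|x|$; the strong maximum principle then rules out equality for $t>0$ and gives the strict inequality $u_r<0$. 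The delicate points here are justifying the reflection argument on the unbounded domain with the quasi-linear coefficients (controlling the first-order terms $b^i$, which involve $\nabla u$ and are only bounded via \eqref{eq:0.1}) and ensuring the maximum principle applies despite the lack of a sign on $c(x,t)$; the standard fix is to absorb $c$ by the substitution $v\mapsto e^{-\lambda t}v$ and to use the decay at infinity to close the argument on each finite ball.
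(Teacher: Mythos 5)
Your proposal is correct in substance, and the first two steps coincide with the paper's: comparison with the subsolution $z\equiv 0$ to get $u\ge 0$, and local uniqueness plus rotation invariance of $L$ to get radial symmetry. You diverge from the paper in two places. For strict positivity, you upgrade $u\ge 0$ to $u>0$ via the strong maximum principle applied to the frozen-coefficient linear equation $u_t=(1+2u^2)\Delta u+\tilde c(x,t)u$ with $\tilde c=2|\nabla u|^2-1+u^{p-1}$ bounded by \eqref{eq:0.1}; the paper instead verifies the structural conditions of \cite{Tr} for the divergence form \eqref{divergence} and invokes Trudinger's parabolic Harnack inequality. Both are legitimate; yours is more elementary, while the Harnack route gives quantitative lower bounds. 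For the monotonicity, the paper differentiates the equation in $r$, obtains the parabolic equation \eqref{eq:0.3} for $\phi=u_re^{-Kt}$ (with $K$ large so the zeroth-order coefficient is nonpositive), and runs a comparison argument on an interval $(r_0,r_1)$ away from $r=0$ to conclude $u_r\le 0$, finishing with the Hopf lemma; you instead use a parabolic moving-plane/reflection argument for $v=u-u_\mu$ on half-spaces, which avoids differentiating the quasi-linear equation (and hence the singular coefficients $(N-1)/r$, $(N-1)/r^2$ near the origin) at the cost of needing the maximum principle on unbounded half-spaces --- a cost you correctly identify and handle by exhaustion, the decay \eqref{eq:0.2}, and the $e^{-\lambda t}$ substitution; your initial-datum inequality $u_0(x)\le u_0(2\mu-x_1,x')$ for $x_1>\mu>0$ is indeed exactly the radial non-increase of $u_0$. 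One small point of precision: at the very end, the strict inequality $u_r<0$ for $r>0$ is not quite a consequence of the interior strong maximum principle alone; it is the Hopf boundary point lemma applied to $v$ on the hyperplane $\{x_1=\mu\}$ (giving $2\,\partial_{x_1}u(\mu,x',t)=\partial_{x_1}v<0$) that delivers it, which is also the tool the paper invokes at the same stage. With that clarification your argument is complete.
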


\begin{proof}
First since $z \equiv 0$ is a subsolution of $L(u)=0$, 
it follows by Lemma \ref{lem:0.2} that $u \ge 0$.
Moreover from \ef{divergence}, we can see that the structural assumptions
for quasi-linear parabolic equations in \cite{Tr} are fulfilled.\
Then we can use the time-dependent Harnack inequality for $L(u)=0$,
see \cite[Theorem 1.1]{Tr}. Thus we have $u>0$.\
Next we suppose that $u_0$ is radial. 
Then by the local uniqueness and the rotation invariance of problem \ef{eq:1.1}, 
it follows that $u$ is radially symmetric. 
Let us assume that $u_0'(r) \le 0$ for all $r \ge 0$. 
We show that $u_r \le 0$. To this end, we follow an idea in \cite[Section 52.5]{QS}. 
Now we differentiate \ef{eq:1.1} with respect to $r$ 
and write $u'=u_r$ for simplicity. Then by a direct calculation, one has
\begin{align*}
u_t'&-(1+2u^2)u'''- \big( 8uu'+\frac{N-1}{r}(1+2u^2) \Big)u'' \\
&- \Big( \frac{4(N-1)}{r}uu'-\frac{N-1}{r^2}(1+2u^2)
+2(u')^2+pu^{p-1}-1 \Big) u'=0.
\end{align*}
We put $\phi(r,t)=u_r(r,t)e^{-Kt}$ for $K>0$. 
Then $\phi$ satisfies the following parabolic problem:
\begin{align}\label{eq:0.3}
\tilde{L}(\phi)&:= (1+2u^2)\phi''+a\phi'+b\phi-\phi_t=0, \\
a(r,t)&=8uu'+\frac{N-1}{r}(1+2u^2), \nonumber \\
b(r,t)&= \frac{4(N-1)}{r}uu'-\frac{N-1}{r^2}(1+2u^2)
+2(u')^2+pu^{p-1}-1-K. \nonumber
\end{align}
Moreover choosing sufficiently large $K>0$, we may assume that
$b(r,t) \le 0$ in $(0,\infty) \times (0,T_0)$.
Hereafter we write $Q=(0,\infty) \times (0,T_0)$ for simplicity.
Next we suppose that 
$$\displaystyle \sup_{Q} \phi(r,t)>0.$$
Then we can take 
$$M:= \frac{1}{2} \displaystyle \sup_{Q} \phi(r,t)>0$$
and put $\Phi(r,t)=\phi(r,t)-M$. 
We observe that $\Phi(0,t)=u_r(0,t)e^{-Kt}-M=-M$
for every $t\in (0,T_0)$. 
Moreover $\Phi(r,t) \to -M$ as $r \to \infty$ for each
$t\in (0,T_0)$. 
In fact since $\| \nabla u(\cdot,t) \|_{H^{m-1}(\RN)}<\infty$ for any $m>\frac{N}{2}+2$, 
it follows by the Morrey embedding theorem that
$|\nabla u(x,t)| \to 0$ as $|x| \to \infty$ and hence
$$
\lim_{r\to\infty} \Phi(r,t)=\lim_{r\to\infty} u_r(r,t)e^{-Kt}-M= -M.
$$
Now since
$$\displaystyle \sup_{t\in (0,T_0)} \Phi(0,t)
= \lim_{r \to \infty} \sup_{t\in (0,T_0)} \Phi(r,t)=-M,$$
there exist $(r_0,r_1) \subset (0,\infty)$ such that
$\Phi(r,t) \le 0$ for $r\in (0,r_0) \cup (r_1,\infty)$ and $t\in (0,T_0)$,
\begin{equation}\label{eq:0.4}
\Phi(r_0,t)=\Phi(r_1,t)=0 \ \hbox{for} \ t\in (0,T_0).
\end{equation}
Moreover by the definition of $\Phi$ and from $u_0'(r) \le 0$, 
it follows that
\begin{equation}\label{eq:0.5}
\Phi(r,0)=\phi(r,0)-M=u_r(r,0)-M=u_0'(r)-M<0
\ \hbox{for} \ r\in (r_0,r_1).
\end{equation}
Finally from \ef{eq:0.3}, $\Phi=\phi-M$ and $b \le 0$, we also have
\begin{equation}\label{eq:0.6}
\tilde{L}(\Phi)=\tilde{L}(\phi)-bM=-bM \ge 0 \ \hbox{in} \ 
(r_0,r_1) \times (0,T_0).
\end{equation}
Since the operator $\tilde{L}$ is parabolic, we can apply the Comparison Principle. 
Thus from \ef{eq:0.4}-\ef{eq:0.6}, it follows that $\Phi$ is a subsolution of $\tilde{L}(u)=0$ 
and hence $\Phi \le 0$ in $(0,\infty) \times (0,T_0)$.
On the other hand by the definition of $M$, one has 
$$
\displaystyle \sup_{Q} \Phi(r,t)= \sup_{Q} \phi(r,t)-M=M>0.
$$ 
This is a contradiction. Thus $\sup_Q \phi \le 0$ and hence
$u_r(r,t)=e^{Kt}\phi(r,t) \le 0$ for all $r \ge 0$ and $t\in (0,T_0)$.
This completes the radial non-increase of $u$ as required.
Finally the radial decrease of $u$ follows by the Hopf lemma,
see \cite[Theorem 6, P. 174]{PW}.
\end{proof}

\subsection{Energy stabilization}
In this subsection, we prove several stability estimates. 
Let $u(x,t)$ be a non-negative bounded, globally defined solution
of \ef{eq:1.1}-\ef{eq:1.2} satisfying \ef{eq:1.4} and denote by $\Omega(u)$ the
$\omega$-limit set of $u$.
We also suppose that $\| \nabla u(\cdot,t) \|_{L^{\infty}(\RN)}$ is uniformly bounded.
We define the functional
$$
I(u):= \frac{1}{2} \int_{\RN} \big( (1+2u^2)|\nabla u|^2 +u^2 \big) \,dx
-\frac{1}{p+1} \int_{\RN} |u|^{p+1} \,dx.
$$
Notice that $I$ is well defined on the set of functions $u\in H^1(\R^N)$
such that $u^2\in H^1(\R^N)$ from $3 \le p<\frac{3N+2}{N-2}$, 
via the Sobolev embedding (cf.\ \cite{CJS}). Moreover we have the following.

\begin{lemma}[Energy identity]
	There holds
	\label{energy-est}	
	\begin{equation*}
	\frac{d}{dt} I \big( u(\cdot ,t) \big) 
	=- \int_{\RN} u_t(x,t)^2 \,dx.
	\end{equation*}
\end{lemma}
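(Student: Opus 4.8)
The plan is to differentiate $t\mapsto I(u(\cdot,t))$ termwise under the integral sign, integrate by parts in the quadratic gradient term so as to move the time derivative off of $\nabla u_t$, and then recognise the resulting bracket as precisely the right-hand side of \ef{eq:1.1}, namely $u_t$ itself.

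First I would justify that the map $t\mapsto I(u(\cdot,t))$ is $C^1$ and that differentiation may be carried under the integral. By Lemma \ref{lem:0.1} we have $u\in C([0,T_0),H^m(\RN))\cap C^1((0,T_0),H^{m-2}(\RN))$ with $m>\tfrac{N}{2}+2$; in particular $u_t\in H^{m-2}(\RN)\subset H^1(\RN)$, so $u_t$ and $\nabla u_t$ lie in $L^2(\RN)$, while $u$, $\nabla u$ and $u_t$ are bounded and decay at infinity by the uniform bound on $\|\nabla u\|_{L^\infty}$ and by \ef{eq:0.2}. These bounds provide time-locally uniform integrable majorants for each integrand and its time derivative, so dominated convergence legitimates the interchange.

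Then the computation is direct. For the kinetic term,
$$\frac{d}{dt}\Big[\tfrac12(1+2u^2)|\nabla u|^2\Big]=2uu_t|\nabla u|^2+(1+2u^2)\nabla u\cdot\nabla u_t,$$
and I integrate the second summand by parts over $\RN$. Writing $A(u,\nabla u)=(1+2u^2)\nabla u$ as in \ef{divergence} and using ${\rm div}\,A(u,\nabla u)=(1+2u^2)\Delta u+4u|\nabla u|^2$, the boundary contributions at infinity vanish by the decay just noted, so the kinetic term yields $-\int_{\RN}\big[(1+2u^2)\Delta u+2u|\nabla u|^2\big]u_t\,dx$. The mass term contributes $\int_{\RN}uu_t\,dx$ and the nonlinear term $-\int_{\RN}|u|^{p-1}u\,u_t\,dx$. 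Summing,
$$\frac{d}{dt}I(u(\cdot,t))=-\int_{\RN}\big[(1+2u^2)\Delta u+2u|\nabla u|^2-u+|u|^{p-1}u\big]u_t\,dx,$$
and the bracket equals $u_t$ by \ef{eq:1.1}, giving $\frac{d}{dt}I(u(\cdot,t))=-\int_{\RN}u_t^2\,dx$.

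I expect the only genuine difficulty to lie in the two analytic steps, namely the interchange of $d/dt$ with $\int_{\RN}$ and the vanishing of the boundary terms in the integration by parts, rather than in the algebra. Both are controlled by the $H^m$-regularity of Lemma \ref{lem:0.1} together with the spatial decay \ef{eq:0.2} and the assumed uniform bound on $\|\nabla u(\cdot,t)\|_{L^\infty(\RN)}$; a clean way to handle the boundary terms is to integrate by parts on balls $B(0,R)$ and let $R\to\infty$, the surface integrals tending to zero because $\nabla u$ is square-integrable and $u,u_t$ decay.
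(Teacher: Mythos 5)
Your proposal is correct and follows essentially the same route as the paper: the paper notes that $I$ is differentiable along smooth bounded directions (with $u_t\in C\big((0,T_0),L^\infty(\RN)\big)$ by Lemma \ref{lem:0.1} qualifying $u_t$ as such a direction), writes $\frac{d}{dt}I(u(\cdot,t))=I'(u(\cdot,t))(u_t)$, and concludes by the ``direct computation'' whose details — differentiation under the integral, integration by parts on the term $(1+2u^2)\nabla u\cdot\nabla u_t$, and substitution of \ef{eq:1.1} — are exactly the ones you spell out. Your algebra checks out, and your attention to the justification of the interchange and the vanishing boundary terms only makes explicit what the paper leaves implicit.
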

\begin{proof}
It is possible to prove that $I$ is differentiable along smooth bounded directions. 
By the proof of Lemma \ref{lem:0.1}, we know that $u \in C^1 \big( (0, T_0), H^{m-2}(\RN) \big)$ 
for any $m>N/2+2$. 
Since $H^{m-2}(\RN) \hookrightarrow L^{\infty}(\RN)$ for $m> N/2+2$, 
it follows that $u \in C^1 \big( (0,T_0), L^{\infty}(\RN) \big)$ 
and hence $I$ is differentiable with respect to $t$ at $u$ along the smooth direction $u_t$.
By a direct computation and from \ef{eq:1.1}, we have
	\begin{equation*}
	\frac{d}{dt} I \big( u(\cdot ,t) \big)=I'(u(\cdot,t))(u_t(\cdot,t))
	=- \int_{\RN} u_t^2(x,t) \,dx.
	\end{equation*}
This completes the proof.
\end{proof}


\noindent
Lemma \ref{energy-est} implies that $I$ is decreasing in $t$ 
and hence $I$ is a Lyapunov function associated with the problem \ef{eq:1.1}-\ef{eq:1.2}.

\begin{lemma}[Flow stabilization] 
	\label{stab}
	For every $K>0$ we have
	\begin{align*}
&	\lim_{t\to\infty} \sup_{\tau\in [0,K]}
\|u(\cdot, t+\tau)-u(\cdot,t)\|_{L^2(\RN)}=0, \\
& \lim_{ t \to \infty} \sup_{ \tau\in [0,K]} 
\| u(\cdot,t+\tau)-u(\cdot,\tau) \|_{C^1(\RN)} =0.
\end{align*}
In particular if $u( \cdot,t_n)\to w$ uniformly in $\RN$, 
then $u( \cdot,t_n+\rho_n)\to w$ in $C^1(\R^N)$
for any bounded sequence $\{\rho_n\}_{n\in\N}\subset\R^+$.
\end{lemma}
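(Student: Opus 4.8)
The plan is to establish the two uniform-in-time equicontinuity statements, from which the final ``in particular'' claim follows by a routine limiting argument. The key analytic input is the energy identity of Lemma~\ref{energy-est}, which gives global $L^2$-control of the time derivative. Indeed, since $I(u(\cdot,t))$ is nonincreasing by Lemma~\ref{energy-est} and $u$ is a non-negative, bounded, globally defined solution satisfying \ef{eq:1.4}, the energy $I(u(\cdot,t))$ is bounded below along the trajectory (using the assumed $L^\infty$ bounds on $u$ and $\nabla u$ together with the uniform decay, which prevent $I$ from diverging to $-\infty$). Integrating the identity over $(0,\infty)$ yields
\begin{equation*}
\int_0^{\infty} \int_{\RN} u_t(x,s)^2 \,dx\,ds
= I\big(u(\cdot,0)\big) - \lim_{t\to\infty} I\big(u(\cdot,t)\big) < \infty.
\end{equation*}

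First I would prove the $L^2$ statement. By the fundamental theorem of calculus and Cauchy--Schwarz in the time variable,
\begin{equation*}
\|u(\cdot,t+\tau)-u(\cdot,t)\|_{L^2(\RN)}^2
= \int_{\RN} \Big( \int_t^{t+\tau} u_t(x,s)\,ds \Big)^2 dx
\le \tau \int_t^{t+\tau}\!\!\int_{\RN} u_t(x,s)^2 \,dx\,ds,
\end{equation*}
so that for $\tau\in[0,K]$ the quantity is bounded by $K \int_t^{t+K}\!\int_{\RN} u_t^2\,dx\,ds$. Since the full time integral of $\|u_t\|_{L^2}^2$ is finite, the tail $\int_t^{t+K}\!\int_{\RN} u_t^2$ tends to $0$ as $t\to\infty$, uniformly in $\tau\in[0,K]$, which gives the first limit.

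Next I would upgrade to the $C^1$ statement. The idea is to combine the $L^2$ decay just obtained with the uniform higher-order regularity of the flow. By the local-existence theory in Lemma~\ref{lem:0.1}, together with the standing $L^\infty$ bounds on $u$ and $\nabla u$ and the uniform parabolicity established in Section~2, one obtains uniform-in-time interior Schauder bounds: the family $\{u(\cdot,t+\cdot)\}_{t>0}$ is bounded in a parabolic $C^{2+\alpha}$ norm on sets of the form $\RN\times[0,K]$ (this uses \ef{eq:1.4} to control behaviour near infinity). Such uniform $C^{2+\alpha}$ bounds give equicontinuity of $u$ and $\nabla u$, so by an interpolation-type argument a sequence that converges to $0$ in $L^2$ must in fact converge to $0$ in $C^1$. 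Concretely, if the second $C^1$ limit failed, one could extract $t_n\to\infty$ and $\tau_n\in[0,K]$ with $\|u(\cdot,t_n+\tau_n)-u(\cdot,\tau_n)\|_{C^1}\ge\delta>0$; the uniform $C^{2+\alpha}$ bound lets us pass (via Arzel\`a--Ascoli on compact sets, plus the uniform decay \ef{eq:1.4} to handle the exterior region) to a limit in which the difference is nonzero in $C^1$ yet zero in $L^2_{\mathrm{loc}}$, a contradiction. The \emph{main obstacle} is this compactness step: one must verify that the uniform $C^{2+\alpha}$ estimates genuinely hold uniformly in $t$ up to infinity and combine them with \ef{eq:1.4} to rule out mass escaping to spatial infinity, since otherwise the passage from $L^2$-smallness to $C^1$-smallness is not automatic on the unbounded domain $\RN$.

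Finally, the ``in particular'' statement follows immediately. Given $u(\cdot,t_n)\to w$ uniformly and a bounded sequence $\rho_n\in\R^+$, choose $K$ with $\rho_n\le K$ for all $n$; writing
\begin{equation*}
\|u(\cdot,t_n+\rho_n)-w\|_{C^1}
\le \sup_{\tau\in[0,K]}\|u(\cdot,t_n+\tau)-u(\cdot,\tau)\|_{C^1}
+ \|u(\cdot,\rho_n)-w\|_{C^1},
\end{equation*}
the first term vanishes by the second limit and the second is controlled using that $u(\cdot,t_n)\to w$ in $C^1$ (again via the uniform regularity bounds, which promote the assumed uniform convergence to $C^1$ convergence). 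Both terms tend to $0$, completing the proof.
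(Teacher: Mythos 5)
Your treatment of the first limit is exactly the paper's: write $u(\cdot,t+\tau)-u(\cdot,t)$ as $\int_t^{t+\tau}u_t\,ds$, apply Cauchy--Schwarz in time, and use the energy identity of Lemma~\ref{energy-est} together with the fact that $I(u(\cdot,t))$ is non-increasing and bounded below (so $I(t)-I(t+K)\to0$, equivalently the tail $\int_t^{t+K}\!\int_{\RN}u_t^2$ vanishes). Your treatment of the second limit --- uniform-in-time Schauder bounds plus the uniform decay \eqref{eq:1.4} give relative compactness of $\{u(\cdot,t)\}_{t>1}$ in $C^1(\RN)$, and a contradiction/compactness argument then upgrades $L^2$-smallness to $C^1$-smallness --- is also the route the paper takes (it delegates this to the compactness established in the proof of Lemma~\ref{lem:3.1} and to the argument of \cite[Lemma 3.1]{MSS}). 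Note that the second display in the statement almost certainly contains a typo: it should read $\|u(\cdot,t+\tau)-u(\cdot,t)\|_{C^1(\RN)}$, since the version with $u(\cdot,\tau)$ would force $u(\cdot,t)$ to stay close to $u(\cdot,0)$ for all large $t$, which is neither intended nor provable here. Your contradiction argument for the $C^1$ claim silently needs this correction: the $L^2$-smallness you invoke to identify the two $C^1$-limits is smallness of $u(\cdot,t_n+\tau_n)-u(\cdot,t_n)$, not of $u(\cdot,t_n+\tau_n)-u(\cdot,\tau_n)$, so the quantity you set up ($\ge\delta$ with $u(\cdot,\tau_n)$ as the second argument) is not the one your first limit controls.

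The one step that genuinely fails as written is the final ``in particular'' argument. You decompose
$\|u(\cdot,t_n+\rho_n)-w\|_{C^1}\le\sup_{\tau}\|u(\cdot,t_n+\tau)-u(\cdot,\tau)\|_{C^1}+\|u(\cdot,\rho_n)-w\|_{C^1}$
and claim the second term is controlled ``using that $u(\cdot,t_n)\to w$.'' But $\{\rho_n\}$ is a \emph{bounded} sequence, so $u(\cdot,\rho_n)$ lives at bounded times and has no reason to be close to $w$; that term does not tend to zero. The correct intermediate point is $u(\cdot,t_n)$:
$\|u(\cdot,t_n+\rho_n)-w\|_{C^1}\le\|u(\cdot,t_n+\rho_n)-u(\cdot,t_n)\|_{C^1}+\|u(\cdot,t_n)-w\|_{C^1}$,
where the first term is handled by the (corrected) second limit with $K\ge\sup_n\rho_n$, and the second by upgrading the assumed uniform convergence $u(\cdot,t_n)\to w$ to $C^1$ convergence via the relative compactness in $C^1(\RN)$, as you do elsewhere. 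With that repair the proof is complete and coincides in substance with the paper's.
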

\begin{proof}
	We fix $\tau \in [0,K]$. For every $t>0$ we have
	\begin{align*}
	\int_{\R^N}|u(\cdot,t+\tau)-u(\cdot,t)|^2 \,dx&=
	\int_{\R^N}\left|\int_t^{t+\tau} u_t(\cdot,s) \,ds \right|^2\,dx \leq
	\tau\int_{\R^N}\int_t^{t+\tau}|u_t(\cdot,s)|^2\,ds \,dx \\
	&=\tau \int_t^{t+\tau}\int_{\R^N}|u_t(\cdot,s)|^2 \,dx\,ds
	=\tau \big( I(t)-I(t+\tau) \big).
	\end{align*}
Since $I(t)$ is non-increasing and bounded from below, 
it has finite limit as $t\to\infty$, which yields the assertion.
For the second claim, since $\{u(\cdot,t), \ t>1 \}$ is relatively compact in $C^1(\RN)$ 
(see the argument in the proof of Lemma~\ref{lem:3.1}), 
one can argue as in \cite[Lemma 3.1]{MSS}.
\end{proof}

\noindent
By Lemma \ref{stab}, we have the following basic result.
\begin{lemma}[$\omega$ limit structure]
\label{limite}	
$\Omega(u)$ is either $\{ 0\}$ or consists of positive solutions of \ef{eq:1.3}.
\end{lemma}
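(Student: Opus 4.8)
The plan is to characterize the $\omega$-limit set $\Omega(u)$ by combining the energy stabilization from Lemma~\ref{stab} with the Lyapunov structure from Lemma~\ref{energy-est}. First I would take an arbitrary $w \in \Omega(u)$, so that $u(\cdot,t_n) \to w$ uniformly in $\RN$ for some sequence $t_n \to \infty$. By the last assertion of Lemma~\ref{stab}, the convergence $u(\cdot,t_n+\rho_n) \to w$ holds in $C^1(\RN)$ for any bounded sequence $\{\rho_n\}$, which gives me control over the flow on time intervals $[t_n, t_n+K]$, not just at the single times $t_n$. Since $u$ is non-negative, clearly $w \ge 0$, so it remains to show that $w$ is either identically zero or a positive solution of \ef{eq:1.3}.

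\smallskip

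The heart of the argument is to prove that $w$ solves the stationary equation \ef{eq:1.3}. The key observation is that the energy $I(u(\cdot,t))$ is non-increasing and bounded below (the solution being bounded and satisfying \ef{eq:1.4}), hence converges to a finite limit $I_\infty$ as $t \to \infty$. By Lemma~\ref{energy-est},
\begin{equation*}
\int_0^K \int_{\RN} u_t(x, t_n+s)^2 \,dx\,ds = I(u(\cdot,t_n)) - I(u(\cdot,t_n+K)) \to I_\infty - I_\infty = 0,
\end{equation*}
so the time derivative $u_t$ dissipates to zero in an averaged sense along the shifted trajectories. The plan is to set $u_n(x,s) := u(x, t_n+s)$ and pass to the limit in the weak (distributional) formulation of \ef{eq:1.1} written via the divergence form \ef{divergence}. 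The $C^1$-convergence supplied by Lemma~\ref{stab} controls $u_n$ and $\nabla u_n$ uniformly, the quasi-linear coefficients $A(u_n,\nabla u_n) = (1+2u_n^2)\nabla u_n$ and $B(u_n,\nabla u_n)$ converge appropriately, and the displayed identity forces the $u_t$ term to drop out in the limit. This should yield that $w$ is a weak, hence (by elliptic regularity and the strong parabolicity established in Section~2) a classical, solution of \ef{eq:1.3}, with the decay $w(x) \to 0$ as $|x| \to \infty$ inherited from \ef{eq:1.4}.

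\smallskip

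Finally I would invoke the positivity dichotomy. Having shown $w \ge 0$ solves \ef{eq:1.3}, the strong maximum principle (equivalently, the Harnack inequality of \cite{Tr} used in Lemma~\ref{lem:0.3}) implies that either $w \equiv 0$ or $w > 0$ everywhere. To conclude that $\Omega(u)$ is \emph{either} $\{0\}$ \emph{or} consists entirely of positive solutions, i.e.\ that the two cases cannot be mixed across different elements of $\Omega(u)$, I would use that $I$ is constant equal to $I_\infty$ on all of $\Omega(u)$: since $I(0) = 0$ while any positive solution $w$ of \ef{eq:1.3} has a definite strictly positive energy level $I(w) = I_\infty$, the value $I_\infty$ cannot simultaneously equal $0$ and a positive constant, so the zero and positive solutions cannot coexist in the same $\omega$-limit.

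\smallskip

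The main obstacle I anticipate is justifying the limit passage in the quasi-linear term $u_n \Delta u_n^2$ (equivalently $\operatorname{div}(2u_n^2 \nabla u_n)$). Unlike the semilinear setting, this term involves products of the solution with its second derivatives, so $C^1$-convergence alone is not obviously enough to control it in the distributional formulation; I would likely need to exploit the uniform $C^2$ bounds from \ef{eq:0.1} together with the relative compactness of $\{u(\cdot,t) : t>1\}$ in $C^1(\RN)$ (referenced in the proof of Lemma~\ref{stab}) and an interpolation or uniform parabolic regularity estimate to upgrade convergence enough to pass to the limit in the divergence-form coefficients. Controlling the behavior uniformly near infinity, so that no mass or energy escapes and the weak limit genuinely satisfies the equation on all of $\RN$, is where condition \ef{eq:1.4} and the exponential decay of stationary solutions must be carefully combined.
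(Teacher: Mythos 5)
Your proposal is correct and follows essentially the same route as the paper: test the equation against $\varphi\in C_0^{\infty}(\RN)$, integrate over a time window $[t_n,t_n+\tau]$, and pass to the limit using the stabilization of Lemma~\ref{stab}. Two remarks on the differences. First, the ``main obstacle'' you anticipate with the quasi-linear term dissolves at once: after integration by parts one has
$-\int_{\RN} u\,\Delta u^2\,\varphi \,dx = 2\int_{\RN} u\varphi|\nabla u|^2 \,dx + 2\int_{\RN} u^2\nabla u\cdot\nabla\varphi \,dx$,
so the weak formulation only ever sees first derivatives of $u$, and the $C^1(\RN)$ convergence of $u(\cdot,\xi_n)$ at intermediate times $\xi_n\in[t_n,t_n+\tau]$ (produced by the mean value theorem) suffices; no upgrade to $C^2$ or interpolation is needed. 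The paper also disposes of the $u_t$ term more directly than your energy-dissipation estimate, by writing $\int_{t_n}^{t_n+\tau}\int_{\RN} u_t\varphi\,dx\,ds=\int_{\RN}\big(u(x,t_n+\tau)-u(x,t_n)\big)\varphi\,dx$, which vanishes because both endpoints converge to the same $w$ by Lemma~\ref{stab} (though your Cauchy--Schwarz argument via $\int\!\!\int u_t^2\to 0$ works equally well). Second, your final step is a genuine addition: the paper's own proof only shows that each $w\in\Omega(u)$ is a non-negative solution of \ef{eq:1.3} and leaves the either/or dichotomy implicit, whereas you supply it via the constancy of $I$ on $\Omega(u)$ together with $I(0)=0$ and $(p+1)I(w)=\tfrac{p-1}{2}\int_{\RN}(|\nabla w|^2+w^2)\,dx+(p-3)\int_{\RN} w^2|\nabla w|^2\,dx>0$ for any nontrivial stationary $w$ (this is the identity from Lemma~\ref{lem:6.6} with $u_t=0$, valid since $p\ge 3$); just note that $I(u(\cdot,t_n))\to I(w)$ requires the uniform exponential decay of $|D^k u|$ guaranteed under \ef{eq:1.4}, as you correctly flag.
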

\begin{proof}
For every $\varphi \in C_0^{\infty}(\RN)$ and $\tau>0$, we have
\begin{align*}
\int_{t_n}^{t_n+\tau}\int_{\R^N} u_t \varphi \,dx\,ds 
&+\int_{t_n}^{t_n+\tau}\int_{\R^N} \nabla u\cdot\nabla \varphi \,dx\,ds
+2\int_{t_n}^{t_n+\tau}\int_{\R^N} u\varphi|\nabla u|^2 \,dx\,ds \\
&\hspace{-2em} 
+2\int_{t_n}^{t_n+\tau}\int_{\R^N} u^2\nabla u\cdot\nabla \varphi \,dx\,ds
+\int_{t_n}^{t_n+\tau}\int_{\R^N} u\varphi \,dx\,ds
=\int_{t_n}^{t_n+\tau}\int_{\R^N} u^p\varphi \,dx\,ds.  \notag
\end{align*}
This yields for some $\xi_n\in [t_n,t_n+\tau]$
\begin{align*}
	&\int_{\R^N} \big( u(x,t_n+\tau)-u(x,t_n) \big) \varphi(x) \,dx +\int_{\R^N} \nabla u(x,\xi_n)\cdot\nabla \varphi(x) \,dx \\
	&+2\int_{\R^N} u(x,\xi_n)\varphi(x)|\nabla u(x,\xi_n)|^2 \,dx +2\int_{\R^N} u^2(x,\xi_n)\nabla u(x,\xi_n)\cdot\nabla \varphi(x) \,dx\\
	&+\int_{\R^N} u(x,\xi_n)\varphi(x) \,dx=\int_{\R^N} u^p(x,\xi_n)\varphi(x) \,dx.  \notag
\end{align*}
Since $u(\cdot,t_n) \to w$ uniformly in $\RN$, by virtue of Lemma \ref{stab}
it follows that $u(\cdot,t_n+\tau) \to w$ in $L^2(\R^N)$ and $u(\cdot,\xi_n) \to w$ in $C^1(\R^N)$,  which yields
\begin{align*}
  \int_{\R^N} \nabla w\cdot\nabla \varphi \,dx 
+2\int_{\R^N} w\varphi|\nabla w|^2 \,dx +2\int_{\R^N} w^2\nabla w\cdot\nabla \varphi \,dx
+\int_{\R^N} w\varphi \,dx=\int_{\R^N} w^p\varphi \,dx,  \notag
\end{align*}
for every $\varphi\in C^\infty_0(\R^N)$, namely $w$ is a non-negative solution of \ef{eq:1.3}.
%
\end{proof}	

\begin{lemma}[Energy bounds]
	\label{lem:2.1}
The following properties hold.
\begin{itemize}
\item[\rm(i)] There exists $C>0$ such that 
$$
\sup_{t>0} \int_{\RN} \big( (1+2|u(x,t)|^2)|\nabla u(x,t)|^2
+|u(x,t)|^2 \big) \,dx \le C.$$
\item[\rm(ii)] If $w\in \Omega(u)$, then $w\in H^1(\RN)$.
Moreover 
$$
\sup_{w\in \Omega(u)}\big( \| w\|_{H^1(\RN)}+\| w\|_{\LI} \big)<\infty.
$$
\end{itemize}
\end{lemma}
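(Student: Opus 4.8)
The plan is to deduce both statements from the Lyapunov structure furnished by Lemma~\ref{energy-est}, together with the uniform pointwise bounds $\|u(\cdot,t)\|_{\LI}\le M$ and $\|\nabla u(\cdot,t)\|_{\LI}\le M'$ assumed in this subsection. For (i), I would first record that, by Lemma~\ref{energy-est}, the map $t\mapsto I(u(\cdot,t))$ is non-increasing; since $u(\cdot,t)\in H^m(\RN)$ for every $t>0$ by Lemma~\ref{lem:0.1}, the energy is finite at positive times, so $I_0:=\sup_{t>0}I(u(\cdot,t))<\infty$. Writing
$$
E(t):=\int_{\RN}\big((1+2u^2)|\nabla u|^2+u^2\big)\,dx=2\,I(u(\cdot,t))+\frac{2}{p+1}\int_{\RN}u^{p+1}\,dx,
$$
the problem reduces to bounding $\int_{\RN}u^{p+1}\,dx$ uniformly in $t$, and since $u\le M$ one has $\int_{\RN}u^{p+1}\,dx\le M^{p-1}\int_{\RN}u^2\,dx$, so it in fact suffices to bound $\|u(\cdot,t)\|_{\LN(\RN)}$ uniformly.

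The main obstacle is precisely this uniform $L^2$ bound. Energy monotonicity alone only yields $\big(\tfrac12-\tfrac{M^{p-1}}{p+1}\big)\int_{\RN}u^2\,dx\le I_0$, which is vacuous unless $M$ is small; and a Gagliardo--Nirenberg absorption based on $u^2\in H^1(\RN)$ and the subcritical range $p<\frac{3N+2}{N-2}$ produces a term of the form $C\,(\int_{\RN}u^2\,dx)^\alpha(\int_{\RN}u^2|\nabla u|^2\,dx)^\beta$ of total degree $\alpha+\beta=(p+1)/4\ge 1$, which for $p\ge 3$ cannot be absorbed into $E(t)$. I would therefore upgrade the qualitative uniform decay \ef{eq:1.4} to a quantitative, time-independent one: using \ef{eq:1.4} and $u\le M$ to control the far field, and the exponential decay of the positive solutions of \ef{eq:1.3}, I would construct an exponentially decaying stationary barrier $\Psi\in \LN\cap\LI$ which is a supersolution of $L(u)=0$ outside a fixed large ball, and then apply the Comparison Principle (Lemma~\ref{lem:0.2}) to conclude $u(x,t)\le\Psi(x)$ for all large $|x|$, uniformly in $t$. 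Together with the bound $u\le M$ on the fixed core ball, this gives $\int_{\RN}u^2\,dx\le C$ uniformly in $t$, hence $E(t)\le C$. Producing this supersolution (a ``universal bound near infinity'') is the delicate point; it is the quasi-linear analogue of the barrier arguments used elsewhere in the paper, and is where the semi-linear techniques must be adapted.

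For (ii), once (i) is available the argument is soft. If $w\in\Omega(u)$, say $u(\cdot,t_n)\to w$ uniformly in $\RN$, then $|w(x)|=\lim_n|u(x,t_n)|\le M$ gives $\|w\|_{\LI}\le M$. By (i) the family $\{u(\cdot,t_n)\}_{n\in\N}$ is bounded in $H^1(\RN)$, since $\|u(\cdot,t_n)\|_{H^1(\RN)}^2\le E(t_n)\le C$; hence a subsequence converges weakly in $H^1(\RN)$, and the uniform convergence identifies the weak limit with $w$. Weak lower semicontinuity of the norm then yields $w\in H^1(\RN)$ with $\|w\|_{H^1(\RN)}^2\le\liminf_n\|u(\cdot,t_n)\|_{H^1(\RN)}^2\le C$. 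Since $C$ and $M$ are independent of the chosen sequence, $\sup_{w\in\Omega(u)}\big(\|w\|_{H^1(\RN)}+\|w\|_{\LI}\big)<\infty$, as claimed.
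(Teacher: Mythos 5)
Your part (ii) is fine and is essentially the paper's argument. The problem is in part (i), where you manufacture an obstacle that is not there and then propose a heavy detour to get around it. You claim that energy monotonicity "only yields $\big(\tfrac12-\tfrac{M^{p-1}}{p+1}\big)\int_{\RN}u^2\,dx\le I_0$, which is vacuous unless $M$ is small," but this is because you are using the \emph{global} supremum $M=\|u\|_{L^\infty(\RN\times(0,\infty))}$ to estimate $\int_{\RN}|u|^{p+1}\,dx$. The hypothesis \ef{eq:1.4} gives you much more: the \emph{far-field} supremum $\sup_{|x|\ge R,\,t>0}|u(x,t)|$ can be made as small as you like by choosing $R$ large. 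The paper's proof simply splits
\begin{equation*}
\frac{1}{p+1}\int_{\RN}|u|^{p+1}\,dx
\le \frac{1}{p+1}\Big(\sup_{|x|\ge R,\,t>0}|u(x,t)|\Big)^{p-1}\int_{B^c(0,R)}|u|^2\,dx
+\int_{B(0,R)}|u|^{p+1}\,dx,
\end{equation*}
chooses $R$ so that the first prefactor is at most $\tfrac14$ (so that term is absorbed into the $\tfrac12\int u^2$ on the left of the energy inequality), and bounds the second term by $\|u\|_{L^\infty}^{p+1}|B(0,R)|$, which is finite because the ball has finite measure. No a priori uniform $L^2$ bound is needed, and the whole of (i) follows in three lines from $I(u(\cdot,t))\le I(u_0)$, the $L^\infty$ bound, and \ef{eq:1.4}.

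Your proposed workaround --- constructing an exponentially decaying stationary supersolution outside a large ball and invoking the Comparison Principle --- is essentially the content of Lemma \ref{lem:6.4}, which lives in a different subsection and serves a different purpose (pointwise exponential decay, needed e.g.\ in Lemma \ref{lem:2.3}). Routing Lemma \ref{lem:2.1} through it is not only unnecessary but problematic: that barrier construction requires $N\ge 2$ (see Remark \ref{rem:6.5}), whereas Lemma \ref{lem:2.1} feeds into Theorem \ref{thm:1.1}, which is stated for $N\ge 1$ under the sole assumptions of boundedness and \ef{eq:1.4}. So as written your proof of (i) has a genuine gap for $N=1$ and, even for $N\ge2$, replaces a trivial absorption by the most delicate barrier argument in the paper.
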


\begin{proof}
We prove (i).\ Since $I$ is decreasing in $t$, it follows that for $t>0$
$$
\frac{1}{2} \int_{\RN}
\Big((1+2u^2)|\nabla u|^2 +u^2\Big) \,dx
-\frac{1}{p+1} \int_{\RN} |u|^{p+1} \,dx\le I(u_0).
$$
Moreover for every $R>0$, one has
\begin{equation*}
\frac{1}{p+1} \int_{\RN} |u|^{p+1} \,dx
\le \frac{1}{p+1} \Big( \sup_{|x| \ge R, \, t>0} |u(x,t)| \Big)^{p-1}
\int_{B^c(0,R)} |u|^2 \,dx
+\int_{B(0,R)} |u|^{p+1} \,dx.
\end{equation*}
Finally from \ef{eq:1.4}, by taking $R$ large enough, we have
$$
\frac{1}{p+1} \Big( \sup_{ |x| \ge R, \, t>0} |u(x,t)| \Big)^{p-1}
 \le \frac{1}{4},
 $$
which yields
$$
\sup_{t>0} \int_{\RN} \Big((1+2|u(x,t)|^2)|\nabla u(x,t)|^2+|u(x,t)|^2\Big) \,dx
\le 4M|B(0,R)|+4I(u_0),$$
where we have set $M= \| u \|_{L^{\infty}( \RN \times [0,\infty))}^{p+1}$.
\vskip3pt
\noindent
(ii) Suppose that 
$$
\| u(\cdot,t_n)-w(\cdot)\|_{L^{\infty}(\RN)} \to 0 \quad\text{for some $t_n \to \infty$}.
$$
Then from (i), it follows that $\{ u(\cdot,t_n) \}$ is bounded in $H^1(\RN)$. 
Thus, up to a subsequence, we have $u(\cdot,t_n) \rightharpoonup \tilde{w}$ in $H^1(\RN)$
and $u(\cdot,t_n) \to \tilde{w}$ a.e.\ in $\RN$ for some $\tilde{w} \in H^1(\RN)$.
Since $u(\cdot,t_n)$ converges to $w$ uniformly, it follows that 
$w \equiv \tilde{w}$, which implies that $w\in H^1(\RN)$.\
By the boundedness of $u(x,t)$ in $H^1(\RN)$ and $L^{\infty}(\RN)$,
the last assertion of (ii) follows.
\end{proof}

\begin{lemma}[Lipschitzianity controls]
	\label{lem:2.2}
Let $0<t_1<t_2$. Then there exists $C>0$ independent of $t_1$ and $t_2$ such that
the following properties hold.
\begin{itemize}
\item[\rm(i)] $ \displaystyle
\| u(\cdot,t_2)-w(\cdot) \|_{\LN(\RN)}
\le e^{C(t_2-t_1)}
\| u(\cdot,t_1)-w(\cdot)\|_{\LN(\RN)}.$
\item[\rm(ii)] 
$ \displaystyle 
\int_{t_1}^{t_2}
\| u(\cdot,s)-w(\cdot) \|_{\HN(\RN)}^2 \,ds
\le Ce^{C(t_2-t_1)}
\| u(\cdot,t_1)-w(\cdot) \|_{\LN(RN)}^2.
$
\end{itemize}
\end{lemma}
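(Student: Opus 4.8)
The plan is to run a Gronwall argument on the $L^2$-distance $v:=u-w$ between the flow and the stationary state. Since $w$ solves \ef{eq:1.3}, in divergence form it satisfies ${\rm div}\,A(w,\nabla w)+B(w,\nabla w)=0$ with $A,B$ as in \ef{divergence}, while $u$ solves $u_t={\rm div}\,A(u,\nabla u)+B(u,\nabla u)$. Subtracting and using that $w$ is time-independent (so $v_t=u_t$), the difference $v$ satisfies
\begin{equation*}
v_t={\rm div}\big( A(u,\nabla u)-A(w,\nabla w) \big)+B(u,\nabla u)-B(w,\nabla w).
\end{equation*}
First I would multiply this identity by $v$ and integrate over $\RN$, integrating the divergence term by parts; the boundary contributions at infinity vanish because $w$ decays exponentially and $u(\cdot,t),\nabla u(\cdot,t)$ are uniformly bounded and decay by \ef{eq:0.2} and \ef{eq:1.4}. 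Since $v_t=u_t\in\LN(\RN)$ by Lemma~\ref{energy-est}, the left-hand side equals $\half\frac{d}{dt}\norm[\LN]{v}^2$, and one obtains
\begin{equation*}
\half\frac{d}{dt}\norm[\LN]{v}^2=-\int_{\RN}\!\big( A(u,\nabla u)-A(w,\nabla w) \big)\!\cdot\!\nabla v\,dx+\int_{\RN}\!\big( B(u,\nabla u)-B(w,\nabla w) \big)v\,dx.
\end{equation*}

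The key algebraic step is to extract a coercive term from the principal part. Writing $A(u,\nabla u)-A(w,\nabla w)=(1+2u^2)\nabla v+2(u^2-w^2)\nabla w$ and using $u^2-w^2=(u+w)v$, the leading contribution is $-\int(1+2u^2)|\nabla v|^2\le-\int|\nabla v|^2$, while the remainder is controlled by Young's inequality as $2\int(u+w)|v|\,|\nabla w|\,|\nabla v|\,dx\le\frac14\int|\nabla v|^2+C\int v^2$. For the lower-order part I would expand $B(u,\nabla u)-B(w,\nabla w)=-v-2\big(|\nabla u|^2u-|\nabla w|^2w\big)+(u^p-w^p)$ and treat each piece: the identity $|\nabla u|^2u-|\nabla w|^2w=|\nabla u|^2v+w(\nabla u+\nabla w)\cdot\nabla v$ again produces a term absorbable by $\frac14\int|\nabla v|^2+C\int v^2$, while nonnegativity and the uniform $\LI$-bounds on $u,w$ give, via the mean value theorem, $|u^p-w^p|\le C|v|$. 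All constants here depend only on the uniform $\LI$-bounds for $u,\nabla u,w,\nabla w$, which are supplied by the standing assumption on $\norm[\LI]{\nabla u}$, by Lemma~\ref{lem:2.1}, and by the smoothness and exponential decay of $w$; hence they are independent of $t_1,t_2$.

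Collecting the estimates and using $(1+2u^2)\ge1$ to absorb the two $\frac14\int|\nabla v|^2$ terms into the coercive term, I arrive at the differential inequality
\begin{equation*}
\frac{d}{dt}\norm[\LN]{v}^2+\norm[\LN]{\nabla v}^2\le C\norm[\LN]{v}^2.
\end{equation*}
Dropping the gradient term and applying Gronwall's lemma on $[t_1,t_2]$ gives $\norm[\LN]{v(t_2)}^2\le e^{C(t_2-t_1)}\norm[\LN]{v(t_1)}^2$, which is (i). For (ii) I would instead integrate the inequality over $[t_1,t_2]$, keeping the gradient term, to obtain $\int_{t_1}^{t_2}\norm[\LN]{\nabla v}^2\,ds\le\norm[\LN]{v(t_1)}^2+C\int_{t_1}^{t_2}\norm[\LN]{v}^2\,ds$; inserting the pointwise-in-time bound from (i) and using $t_2-t_1\le e^{t_2-t_1}$ to absorb the extra time factor yields $\int_{t_1}^{t_2}\norm[\HN]{v}^2\,ds\le Ce^{C(t_2-t_1)}\norm[\LN]{v(t_1)}^2$, which is (ii). The main obstacle is the quasilinear cross terms: one must verify that the coercivity furnished by $(1+2u^2)\ge1$ genuinely dominates the gradient-type errors arising from $u^2-w^2$ and $|\nabla u|^2-|\nabla w|^2$ after Young's inequality, and that the integrations by parts are legitimate given the decay of $u$ and $w$; once this is secured the remainder is a routine Gronwall estimate.
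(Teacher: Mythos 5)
Your proof is correct and follows essentially the same route as the paper: an energy estimate obtained by testing the equation for $u-w$ with $u-w$ itself, extracting coercivity from $(1+2u^2)\ge 1$, absorbing the quasilinear cross terms via Young's inequality, and concluding with Gronwall for (i) and a time integration for (ii). The only cosmetic difference is that you organize the difference of the two equations through the divergence form \ef{divergence}, whereas the paper linearizes via the mean value theorem into the non-divergence equation \ef{eq:2.8.1} before integrating by parts; both lead to the same differential inequality \ef{eq:2.2}.
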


\begin{proof}
(i) The proof is based on the standard energy estimate. 
We put $\phi(x,t)=u(x,t)-w(x)$. 
From \ef{eq:1.1}, \ef{eq:1.3} and by the mean value theorem, one has 
\begin{equation} \label{eq:2.8.1}
\phi_t-(1+2u^2)\Delta \phi -2 w \nabla(u+w) \cdot \nabla \phi
-2(u+w) \Delta w \phi -2| \nabla u|^2 \phi +\phi
-p\big( \kappa u+(1-\kappa)w \big)^{p-1} \phi =0,
\end{equation}
for some $\kappa \in (0,1)$.
Multiplying \ef{eq:2.8.1} by $\phi$ and integrating it over $\RN$, we get
\begin{align*}
\frac{1}{2} \frac{\partial }{\partial t} \| \phi\|_{L^2}^2
&- \int_{\RN} \Big( (1+2u^2)\phi \Delta \phi +2w\phi \nabla (u+w) \cdot \nabla \phi
+2(u+w) \phi^2 \Delta w +2 \phi^2 |\nabla u|^2 \Big) \,dx\\
& +\int_{\RN} \phi^2 \,dx -p \int_{\RN} \big( \kappa u+(1-\kappa)w \big)^{p-1} \phi^2 \,dx =0.
\end{align*}
Using the integration by parts, we have
\begin{align*}
-\int_{\RN} (1+2u^2) \phi \Delta \phi \,dx
&= \int_{\RN} (1+2u^2) |\nabla \phi|^2 +4 u \phi \nabla u \cdot \nabla \phi \,dx, \\
-\int_{\RN} 2(u+w)\phi^2 \Delta w \,dx
&= \int_{\RN} 2 \phi^2 \nabla w \cdot \nabla (u+w)
+4(u+w) \phi \nabla w \cdot \nabla \phi \,dx.
\end{align*}
Thus one has
\begin{align*}
\frac{1}{2} \frac{\partial}{\partial t} \| \phi\|_{L^2}^2
&+ \int_{\RN} \Big\{ (1+2u^2)|\nabla \phi|^2 +4 u \phi \nabla u \cdot \nabla \phi
-2w \phi \nabla (u+w) \cdot \nabla \phi \\
&\hspace{4em} +2 \phi^2 \nabla w \cdot \nabla (u+w) 
+4(u+w) \phi \nabla w \cdot \nabla \phi
-2\phi^2 |\nabla u|^2 \Big\} \,dx \\
&+\int_{\RN} \phi^2 \,dx
-p \int_{\RN} \big( \kappa u+(1-\kappa)w \big)^{p-1} \phi^2 \,dx =0.
\end{align*}
Since $u$, $w$, $\nabla u$ and $\nabla w$ are bounded, we obtain
$$
\frac{1}{2}\frac{\partial }{\partial t} \| \phi\|_{L^2}^2 +\| \phi\|_{H^1}^2 
\le C \| \phi\|_{L^2} \| \nabla \phi\|_{L^2} +C \| \phi \|_{L^2}^2.$$
Thus by the Young inequality, it follows that
\begin{equation}\label{eq:2.2}
\frac{\partial}{\partial t} \| \phi(\cdot,t)\|_{L^2}^2
+\| \phi(\cdot,t)\|_{H^1}^2 \le C \| \phi(\cdot,t) \|_{L^2}^2.
\end{equation}
Now let $\zeta(t):=\| \phi(\cdot,t) \|_{L^2}^2$.
Then one has $\zeta'(t) \le C \zeta(t)$. 
By the Gronwall inequality, it follows that
$\zeta(t_2) \le e^{C(t_2-t_1)} \zeta(t_1)$ and hence the claim holds.

(ii) Integrating \ef{eq:2.2} over $[t_1,t_2]$, one has
\begin{align*}
\int_{t_1}^{t_2} \| \phi(\cdot,s) \|_{H^1}^2 \,ds
\le \| \phi(\cdot,t_1) \|_{L^2}^2
+C \int_{t_1}^{t_2} \| \phi(\cdot,s) \|_{L^2}^2 \,ds.
\end{align*}
Thus from (i), we get
$$
\int_{t_1}^{t_2} \| \phi(\cdot,s) \|_{H^1}^2 \,ds
\le \Big( 1+\frac{1}{2C} \Big)e^{2C(t_2-t_1)}
\| \phi(\cdot,s) \|_{L^2}^2.$$
This completes the proof.
\end{proof}

\begin{lemma}[Further stability estimates]
	\label{lem:2.3}
Let $K>1$ be arbitrarily given and $\{ t_n\}_{n\in\N}$ be a sequence such that 
$t_n \to \infty$ as $n \to \infty$. 
If $\| u(\cdot,t_n)-w(\cdot) \|_{\LI} \to 0$ 
as $n \to \infty$, then the following properties hold.
\begin{itemize}
\item[\rm(i)] 
$\displaystyle \lim_{n \to \infty} 
\int_0^K \| u(\cdot,s+t_n)-w(\cdot) \|_{\HN(\RN)}^2 \,ds =0$.
\item[\rm(ii)]
$\displaystyle 
\lim_{ n \to \infty} \| u(\cdot,t+t_n)-w(\cdot) \|_{L^{\infty}(\RN \times [0,K])}=0.$
\end{itemize}
\end{lemma}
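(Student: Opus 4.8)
The plan is to reduce part (i) to an $L^2$-convergence statement and then to establish that convergence, while part (ii) will follow almost directly from the flow stabilization of Lemma~\ref{stab}. First note that, since $u(\cdot,t_n)\to w$ uniformly, Lemma~\ref{limite} ensures that $w$ solves \eqref{eq:1.3}; hence, by Lemma~\ref{lem:2.1}(ii), $w\in H^1(\RN)$ and $w$ decays at infinity, so that $\int_{B^c(0,R)}w^2\,dx\to0$ as $R\to\infty$. With $w$ thus identified as a stationary solution, I would apply Lemma~\ref{lem:2.2}(ii) on the interval $[t_n,t_n+K]$ and change variables $s=\sigma+t_n$ to obtain
\begin{equation*}
\int_0^K\|u(\cdot,\sigma+t_n)-w\|_{H^1(\RN)}^2\,d\sigma
\le Ce^{CK}\,\|u(\cdot,t_n)-w\|_{L^2(\RN)}^2 .
\end{equation*}
Consequently, (i) holds as soon as one proves that $\|u(\cdot,t_n)-w\|_{L^2(\RN)}\to0$.

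To establish this $L^2$-convergence I would split $\RN=B(0,R)\cup B^c(0,R)$. On the ball the uniform hypothesis suffices, since $\int_{B(0,R)}|u(\cdot,t_n)-w|^2\,dx\le|B(0,R)|\,\|u(\cdot,t_n)-w\|_{L^\infty(\RN)}^2\to0$ for each fixed $R$; together with the tail of $w$ noted above, everything then reduces to the uniform-in-time bound $\sup_{t>0}\int_{B^c(0,2R)}u(x,t)^2\,dx\to0$ as $R\to\infty$, which I regard as the heart of the proof. To obtain it I would test \eqref{eq:1.1}, written in the divergence form \eqref{divergence}, against $u\chi^2$, where $\chi$ vanishes on $B(0,R)$, equals $1$ on $B^c(0,2R)$ and satisfies $|\nabla\chi|\le C/R$. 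Setting $y(t)=\int u^2\chi^2\,dx$, the diffusion contributes the nonnegative terms $\int(1+2u^2)|\nabla u|^2\chi^2+2\int u^2|\nabla u|^2\chi^2$, the cross term is bounded by $\frac{C}{R}\int u|\nabla u|\le C/R$ via the energy estimate of Lemma~\ref{lem:2.1}(i), and, crucially, on $\operatorname{supp}\chi$ the pointwise decay \eqref{eq:1.4} tames the reaction term, since $\int u^{p+1}\chi^2\le\big(\sup_{|x|\ge R,\,t>0}u\big)^{p-1}\,y(t)\le\tfrac12\,y(t)$ for $R$ large. This yields the differential inequality $y'(t)+y(t)\le C/R$, so by Gronwall $y(t)\le y(1)+C/R$ for $t\ge1$; as $u(\cdot,1)\in L^2(\RN)$, the initial tail $y(1)=\int_{B^c(0,R)}u(\cdot,1)^2\,dx$ also vanishes as $R\to\infty$. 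Combining the three pieces gives $\|u(\cdot,t_n)-w\|_{L^2(\RN)}\to0$, and hence (i).

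For (ii) I would argue by contradiction using the last assertion of Lemma~\ref{stab}. If uniform convergence on $\RN\times[0,K]$ failed, there would be $\eps>0$, a subsequence, times $s_k\in[0,K]$ and points $x_k\in\RN$ with $|u(x_k,s_k+t_{n_k})-w(x_k)|\ge\eps$. Since $\{s_k\}\subset[0,K]$ is bounded and $u(\cdot,t_{n_k})\to w$ uniformly, Lemma~\ref{stab} gives $u(\cdot,t_{n_k}+s_k)\to w$ in $C^1(\RN)$, in particular $\sup_{x\in\RN}|u(x,s_k+t_{n_k})-w(x)|\to0$, contradicting the previous inequality. Hence the convergence is uniform on $\RN\times[0,K]$, proving (ii).

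The main obstacle is precisely the uniform tail estimate: the standing hypotheses only furnish the pointwise decay \eqref{eq:1.4} and the global energy bound of Lemma~\ref{lem:2.1}(i), and neither of these by itself rules out that $L^2$-mass drifts off to spatial infinity. The weighted-energy computation sketched above is what upgrades pointwise smallness into genuine $L^2$ tail control, and the favourable sign that makes it work hinges on $u$ being small at infinity so that the reaction term can be absorbed. Once this is in place, everything else is a routine combination of Lemma~\ref{lem:2.2}, Lemma~\ref{stab}, and the elementary ball/complement splitting.
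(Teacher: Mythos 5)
Your proposal is correct, but it takes a genuinely different route from the paper in both parts. For (i) you and the paper agree on the reduction: Lemma~\ref{lem:2.2}(ii) on $[t_n,t_n+K]$ turns everything into the single claim $\| u(\cdot,t_n)-w\|_{L^2(\RN)}\to 0$. The paper proves that claim by invoking the \emph{uniform exponential pointwise decay} of $\sup_n u(\cdot,t_n)$ (the supersolution construction of Lemma~\ref{lem:6.4}, via comparison with a translated ground state) together with the exponential decay of $w$ and the compact embedding $H^1\hookrightarrow L^2_{\rm loc}$. You instead prove a uniform-in-time $L^2$ \emph{tail} bound by testing the divergence form \eqref{divergence} against $u\chi^2$ and absorbing the reaction term using \eqref{eq:1.4}; combined with the trivial estimate on balls and the tail of $w\in H^1$, this gives the same $L^2$ convergence. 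Your computation is sound (the differential inequality $y'+y\le C/R$ and the Gronwall step check out; only the bookkeeping of which good term comes from the diffusion versus from $B(u,\nabla u)$ is slightly off, harmlessly), and it buys something: it avoids the ground-state supersolution construction entirely, hence is more elementary and insensitive to the $N=1$ subtlety discussed in Remark~\ref{rem:6.5}, though it yields only $L^2$ tail smallness rather than pointwise exponential decay. For (ii) the paper runs a quantitative comparison-principle argument in the spirit of \cite{LSU}, producing the Lipschitz-type bound $\|\phi_n\|_{L^\infty(\RN\times[0,K])}\le e^{AK}\|\phi_n(\cdot,0)\|_{L^\infty(\RN)}$; you obtain the same qualitative conclusion more softly from the last assertion of Lemma~\ref{stab} (which is legitimate here: that assertion rests on the $C^1$ relative compactness of the flow, not on Lemma~\ref{lem:2.3}, so there is no circularity). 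The paper's version is stronger as an estimate, but only the qualitative statement (ii) is used later, so your argument suffices.
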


\begin{proof}
Arguing as in the proof of Lemma \ref{lem:2.1} (ii), 
we may assume that $u(\cdot,t_n) \rightharpoonup w$ in $H^1(\RN)$.
Moreover by the uniform decay condition \ef{eq:1.4}, one can show that
$\sup_{n \geq 1} u(x,t_n)$ decays exponentially at infinity, see Lemma \ref{lem:6.4}. 
Thus by the exponential decay of $w$ and the embedding 
$H^1(\RN) \hookrightarrow L^2_{{\rm loc}}(\RN)$, it follows that
\begin{equation}\label{eq:2.3}
\lim_{ n \to \infty} \| u(\cdot,t_n)-w(\cdot) \|_{L^2(\RN)}=0.
\end{equation}
Next applying Lemma \ref{lem:2.2} (ii) with $t_1=t_n$ and $t_2=t_n+K$, one has
$$
\int_{t_n}^{t_n+K} \| u(\cdot,s)-w(\cdot) \|_{H^1}^2 \,ds
\le Ce^{CK} \| u(\cdot,t_n)-w(\cdot) \|_{L^2}^2.$$
Thus from \ef{eq:2.3}, the claim holds.

(ii) 
We argue as in \cite[Theorem 2.5, P. 18]{LSU}.
Let $\phi_n(x,t)=u(x,t+t_n)-w(x)$ and define
\begin{align*}
\tilde{L}(\phi_n)&=
(1+2u^2) \Delta \phi_n +2w \nabla (u+w) \cdot \nabla \phi_n+a(x) \phi_n -( \phi_n)_t,\\
a(x) &= 2(u+w) \Delta w +2 |\nabla u|^2-1+p \big( \kappa u+(1-\kappa)w \big)^{p-1}.
\end{align*}
Then from \ef{eq:2.8.1}, it follows that $\tilde{L}(\phi_n)=0$.
We put
\begin{equation} \label{eq:2.9.1}
\| u\|_{L^{\infty} \big( \RN \times [0,K] \big)}
+\| \nabla u\|_{L^{\infty} \big( \RN \times [0,K] \big)} = M, \
\| a\|_{L^{\infty}(\RN)} = A, \ 
\| \phi_n(\cdot ,0) \|_{L^{\infty}(\RN)} = B.
\end{equation}
For $\varepsilon>0$, $R>0$ and $c>0$, we define
$$
Z(x,t)=\phi_n(x,t)e^{-(A+\varepsilon)t} -B-\frac{M}{R^2} (x^2+ct), \ 
|x|\le R, \ t\in [0,K].
$$
Then by a direct calculation, one has
\begin{align*}
(\tilde{L}-A-\varepsilon) Z &= B \big( A+\varepsilon-a(x) \big) \\
&\quad +\frac{M}{R^2} \Big( c + \big( A+\varepsilon-a(x) \big)(x^2+ct)
-2(1+2u^2)-4w\nabla (u+w) \cdot \nabla x \Big) \\
&=: F(x,t).
\end{align*}
From \ef{eq:2.9.1} and the boundedness of $u$, $w$, $\nabla u$, $\nabla w$, 
we can choose large $c$ independent of $\varepsilon$, $R$ and $n\in \N$ so that $F \ge 0$.
Moreover from \ef{eq:2.9.1}, we also have
\begin{align*}
Z(x,t) &= \phi_ne^{-(A+\varepsilon)t}-B-M-\frac{M}{R^2}ct \le 0 \quad 
\hbox{on} \ |x|=R, \ t\in [0,K],\\
Z(x,0) &= \phi_n(x,0)-B-\frac{M}{R^2}x^2 \le 0 \quad 
\hbox{for} \ |x| \le R.
\end{align*}
Thus by applying the Comparison Principle to $\tilde{L}-A-\varepsilon$, 
we obtain $Z \le 0$ for $|x| \le R$ and $t\in [0,K]$. 
Defining
$$
z(x,t)=\phi_n(x,t) e^{-(A+\varepsilon)t}+B+\frac{M}{R^2}(x^2+ct)$$
for same $c>0$, one can see that
$$
(\tilde{L}-A-\varepsilon)z=f \le 0, \quad 
z \ge 0 \ \hbox{on} \ |x|=R, \ t\in [0,K], \quad 
z(x,0) \ge 0 \ \hbox{for} \ |x|\le R.
$$
Thus by the Comparison Principle, we get $z \ge 0$ and hence
$$
| \phi_n(x,t)| \le e^{(A+\varepsilon)t}\left( B+ \frac{M}{R^2}(x^2+ct) \right)
\quad \hbox{for} \ |x|\le R, \ t\in [0,K].$$
Since $c$ is independent of $\varepsilon$ and $R$, 
we can take $R \to \infty$, $\varepsilon \to 0$ to obtain 
$$
\| \phi_n(\cdot,t) \|_{L^{\infty} \big( \RN \times [0,K] \big)}
\le Be^{AK} = e^{AK} \| \phi_n(\cdot ,0) \|_{L^{\infty}(\RN)}.$$
Then by the assumption $\| \phi_n(\cdot,0)\|_{L^{\infty}}=
\| u(\cdot,t_n)-w(\cdot)\|_{L^{\infty}}\to 0$, it follows that
$$ \| \phi_n( \cdot,t)\|_{L^{\infty} \big( \RN \times [0,K] \big)} \to 0
\quad \hbox{as} \ n \to \infty.$$
This completes the proof.
\end{proof}

\begin{lemma}[Further stability estimates]
	\label{lem:2.4}
Let $K>1$.\ Then there exists $C=C(K)>0$ with
$$
\int_0^K \| u(\cdot,s+\tau +t)-w(\cdot)\|_{\HN(\RN)}^2 \,ds
\le C \int_0^K \| u(\cdot,s+t)-w(\cdot) \|_{\HN(\RN)}^2 \,ds
$$
for any $t>0$ and $\tau \in [0,K]$.
\end{lemma}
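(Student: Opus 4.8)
The plan is to reduce everything to the scalar quantities $\zeta(s):=\|u(\cdot,s)-w\|_{L^2(\RN)}^2$ and $g(s):=\|u(\cdot,s)-w\|_{H^1(\RN)}^2$, noting that $\zeta(s)\le g(s)$ for every $s$. After the substitutions $s\mapsto s+t$ and $s\mapsto s+\tau+t$ in the two integrals, the asserted inequality becomes
$$
\int_{\tau+t}^{\tau+t+K} g(s)\,ds \le C(K)\int_t^{t+K} g(s)\,ds,
$$
so it suffices to compare the integral of $g$ over the shifted window $[\tau+t,\tau+t+K]$ with that over the reference window $[t,t+K]$, uniformly in $t>0$ and $\tau\in[0,K]$.

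Since $0\le\tau\le K$, the shifted window splits as $[\tau+t,\tau+t+K]=[\tau+t,t+K]\cup[t+K,\tau+t+K]$. The first piece is contained in $[t,t+K]$, so by positivity of $g$ its integral is immediately bounded by $\int_t^{t+K} g$. Thus the whole matter reduces to the overhanging piece $\int_{t+K}^{\tau+t+K} g(s)\,ds$, of length $\tau\le K$ (and empty, hence trivial, when $\tau=0$). First I would apply Lemma \ref{lem:2.2}(ii) with $t_1=t+K$ and $t_2=\tau+t+K$ to obtain
$$
\int_{t+K}^{\tau+t+K} g(s)\,ds \le Ce^{C\tau}\,\zeta(t+K)\le Ce^{CK}\,\zeta(t+K),
$$
which transfers the estimate of the overhang to the control of the single value $\zeta(t+K)$.

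The remaining, and main, point is to bound this pointwise value $\zeta(t+K)$ by the reference integral. Here I would invoke the squared form of Lemma \ref{lem:2.2}(i), namely $\zeta(t+K)\le e^{C(t+K-r)}\zeta(r)\le e^{CK}\zeta(r)$ for every $r\in[t,t+K]$, which is legitimate because $t+K-r\le K$ and $r\le t+K$ (the constant $C$ being that of Lemma \ref{lem:2.2}, possibly doubled upon squaring the $L^2$-norm). Averaging this inequality over $r\in[t,t+K]$ and using $\zeta\le g$ yields
$$
K\,\zeta(t+K)\le e^{CK}\int_t^{t+K}\zeta(r)\,dr\le e^{CK}\int_t^{t+K} g(r)\,dr,
$$
so that $\zeta(t+K)\le (e^{CK}/K)\int_t^{t+K} g$. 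It is precisely at this averaging step that the hypothesis $K>1$ is convenient: it keeps the factor $1/K$ harmless, so no blow-up occurs and the bound stays uniform.

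Combining the three estimates gives
$$
\int_{\tau+t}^{\tau+t+K} g(s)\,ds \le \Big(1+\tfrac{C}{K}e^{2CK}\Big)\int_t^{t+K} g(s)\,ds,
$$
which is the desired inequality with $C(K):=1+\tfrac{C}{K}e^{2CK}$ independent of $t$ and $\tau$. I expect the only delicate bookkeeping to be the splitting of the shifted window and the averaging trick that converts the pointwise quantity $\zeta(t+K)$ into the $H^1$-integral over the reference window; everything else is a direct application of Lemma \ref{lem:2.2}.
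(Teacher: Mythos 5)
Your proof is correct. It follows the same basic strategy as the paper --- reduce the estimate on the shifted window to the control of a single pointwise $L^2$ quantity by the reference integral $\int_t^{t+K}\|u(\cdot,s)-w\|_{H^1}^2\,ds$, and then propagate forward via the Gronwall-type bounds of Lemma~\ref{lem:2.2} --- but the details differ in two respects. First, your decomposition is more economical: you write $[t+\tau,t+\tau+K]=[t+\tau,t+K]\cup[t+K,t+\tau+K]$ and only need to estimate the overhang of length $\tau\le K$, whereas the paper covers the whole shifted interval by $[t,t+K]\cup[t+s_0,t+s_0+2K]$ and applies Lemma~\ref{lem:2.2}(ii) over a window of length $2K$. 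Second, to convert the pointwise quantity into the reference integral, the paper uses the mean value theorem together with the Cauchy--Schwarz inequality to select a single time $s_0\in[0,K]$ at which $\|u(\cdot,t+s_0)-w\|_{H^1}$ is at most (a constant times) the $L^2$-average, while you instead average the squared form of Lemma~\ref{lem:2.2}(i), $\zeta(t+K)\le e^{2CK}\zeta(r)$, over $r\in[t,t+K]$; these are two implementations of the same mean-value idea, and both are legitimate. Two harmless bookkeeping remarks: your final constant should read $1+\tfrac{C}{K}e^{3CK}$ rather than $1+\tfrac{C}{K}e^{2CK}$ (one factor $e^{CK}$ from the overhang estimate and $e^{2CK}$ from the squared part (i)), and the hypothesis $K>1$ is not actually needed to keep $1/K$ harmless, since the constant $C(K)$ is allowed to depend on $K$ in any case.
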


\begin{proof}
Although the proof proceeds as in  \cite[Proposition 4.2]{CGH},
we will sketch it for the sake of completeness.\
By the mean value theorem and Schwarz inequality, 
there is $s_0\in [0,K]$ with
\begin{align*}
\| u(\cdot,t+s_0)-w(\cdot) \|_{L^2}
&\le \| u(\cdot,t+s_0)-w(\cdot) \|_{H^1} \\
&\le C\int_t^{t+K} \| u(\cdot,s)-w(\cdot) \|_{H^1} \,ds \\
&\le C \Big( \int_t^{t+K} 
\| u(\cdot,s)-w(\cdot) \|_{H^1}^2 \,ds \Big)^{1\over 2}.
\end{align*}
Thus by applying Lemma \ref{lem:2.2} (ii) with $t_1=t+s_0$ and
$t_2=t+s_0+2K$, it follows that
\begin{align*}
\int_{t+s_0}^{t+s_0+2K}
\| u(\cdot,s)-w(\cdot)\|_{H^1}^2 \,ds
&\le Ce^{CK} \| u(\cdot,t+s_0)-w(\cdot) \|_{L^2}^2 \\
&\le Ce^{CK} \int_t^{t+K} \| u(\cdot,s)-w(\cdot) \|_{H^1}^2 \,ds.
\end{align*}
Let $\tau \in [0,K]$. Since $[t+\tau,t+\tau+K] \subset
[t,t+2K] \subset [t,t+K] \cup [t+s_0,t+s_0+2K]$, we obtain
\begin{align*}
\int_{t+\tau}^{t+\tau+K}
\| u(\cdot,s)-w(\cdot) \|_{H^1}^2 \,ds 
\le (1+Ce^{CK})\int_t^{t+K} \| u(\cdot,s)-w(\cdot) \|_{H^1}^2 \,ds.
\end{align*}
This completes the proof.
\end{proof}

\begin{lemma}\label{lem:2.5}
Let $\{z_n\}_{n\in\N} \subset \RN$ be a sequence with $| z_n |\le 1$ for all $n\in\N$.\
Then there exists $C>0$ independent of $n \in \N$ such that
the following properties hold.
\begin{itemize}
\item[\rm(i)]
$\| w(\cdot+z_n)-w(\cdot)-\nabla w(\cdot) \cdot z_n \|_{\HN(\RN)}\le C|z_n|^2$.
\item[\rm(ii)]
$\displaystyle \| w(\cdot+z_n)-w(\cdot) \|_{\HN(\RN)} \le C|z_n|$.
\end{itemize}
\end{lemma}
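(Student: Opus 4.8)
The plan is to read both inequalities as quantitative second- and first-order Taylor estimates for the shift $y \mapsto w(\cdot + y)$, and to reduce everything to the single fact that $w$ possesses three derivatives in $L^2(\RN)$. So I would first record the regularity input: since $w$ is a classical solution of the uniformly elliptic equation \ef{eq:1.3}, the nonlinearity $t \mapsto |t|^{p-1}t$ is of class $C^3$ for $p \ge 3$, and $w$ together with its derivatives decays exponentially (as recalled in the Introduction), so a standard elliptic bootstrap gives $w \in H^3(\RN)$, with $\|D^2 w\|_{L^2}$ and $\|D^3 w\|_{L^2}$ finite (here $\|D^k w\|_{L^2}$ abbreviates the $L^2$ norm of the Euclidean norm of the array of $k$-th order partials). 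These two quantities will furnish the constant $C$.

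For (i) I would use the integral form of Taylor's formula along the segment $[x,x+z_n]$. Writing $g_n(x) := w(x+z_n)-w(x)-\nabla w(x)\cdot z_n$ and differentiating, one checks the pointwise bounds
$$
|g_n(x)| \le |z_n|^2 \int_0^1 (1-t)\,|D^2 w(x+tz_n)|\,dt, \qquad
|\nabla g_n(x)| \le |z_n|^2 \int_0^1 (1-t)\,|D^3 w(x+tz_n)|\,dt,
$$
where the gradient bound uses the identity $\nabla g_n = \nabla w(\cdot+z_n)-\nabla w-D^2 w\,z_n$, i.e.\ that $g_n$ is the second-order Taylor remainder of $w$ and $\nabla g_n$ the first-order Taylor remainder of $\nabla w$. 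Applying Minkowski's integral inequality, the translation invariance $\|D^k w(\cdot+tz_n)\|_{L^2}=\|D^k w\|_{L^2}$, and $\int_0^1(1-t)\,dt=\tfrac12$, I obtain $\|g_n\|_{L^2}\le \tfrac12|z_n|^2\|D^2 w\|_{L^2}$ and $\|\nabla g_n\|_{L^2}\le \tfrac12|z_n|^2\|D^3 w\|_{L^2}$, hence $\|g_n\|_{H^1}\le C|z_n|^2$, which is (i).

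For (ii) I would simply combine (i) with the triangle inequality and the hypothesis $|z_n|\le 1$:
$$
\|w(\cdot+z_n)-w\|_{H^1} \le \|g_n\|_{H^1} + \|\nabla w\cdot z_n\|_{H^1} \le C|z_n|^2 + \|w\|_{H^2}\,|z_n| \le C'|z_n|,
$$
using $\|\nabla w\cdot z_n\|_{H^1}\le |z_n|\,\|w\|_{H^2}$ (by Cauchy--Schwarz on the components) and $|z_n|^2\le |z_n|$. Alternatively, (ii) follows directly from the first-order identity $w(x+z_n)-w(x)=\int_0^1 \nabla w(x+tz_n)\cdot z_n\,dt$, together with the analogous one for $\nabla w$, by the same Minkowski and translation-invariance argument, which even removes the restriction $|z_n|\le 1$.

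The only genuinely non-mechanical point is the regularity claim $w\in H^3(\RN)$; once this is secured the estimates are routine. The restriction $p\ge 3$ is precisely what makes $t\mapsto |t|^{p-1}t$ three times continuously differentiable, so that the third-order Taylor remainder controlling $\|\nabla g_n\|_{L^2}$ is meaningful; for smaller $p$ this is exactly where the argument would break down. I would therefore concentrate the effort on confirming that the exponential decay of $w$ and of its derivatives indeed yields $D^2 w,\,D^3 w\in L^2(\RN)$.
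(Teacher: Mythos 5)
Your proof is correct and follows essentially the same route as the paper: both estimate the $H^1$ norm of the Taylor remainder by the $L^2$ norms of the (translated) second and third derivatives of $w$, which are controlled via the equation \eqref{eq:1.3} and the exponential decay, and both deduce (ii) from (i) by the triangle inequality together with $\|\nabla w(\cdot)\cdot z_n\|_{H^1}\le C|z_n|$ and $|z_n|^2\le|z_n|$. The only cosmetic differences are that you use the integral form of the remainder with Minkowski's integral inequality where the paper uses the Lagrange form, and that the paper obtains the needed bound on $\|D^2w\|_{L^2}$ by an explicit energy identity for the differentiated equation rather than by invoking an $H^3$ bootstrap (note also that, since $w>0$, the smoothness of $t\mapsto|t|^{p-1}t$ at $t=0$ is not actually what is at stake for this regularity).
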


\begin{proof}
(i) By the Taylor expansion, one has
$$
| w(x+z_n)-w(x)-\nabla w(x) \cdot z_n|
\le C |z_n|^2 \sum_{i,j=1}^N \Big| \frac{\partial^2 w}{\partial x_i \partial x_j}(x+\kappa_n z_n) \Big|,
$$
for some $\kappa_n \in (0,1)$.
From \ef{eq:1.3} and by the exponential decay of $w$, we can show that
$\frac{\partial^3 w}{\partial x_i \partial x_j \partial x_k}$
also decays exponentially at infinity for all $i,j,k=1,\cdots,N$. Thus, we get
\begin{align*}
\| w(\cdot+z_n)-w(\cdot)-\nabla w(\cdot)\cdot z_n \|_{H^1}
\le C |z_n|^2 \sum_{i,j=1}^N 
\Big\| \frac{\partial^2 w}{\partial x_i \partial x_j} (\cdot+\kappa_n z_n)
\Big\|_{H^1}\le C|z_n|^2.
\end{align*}
(ii) We differentiate \ef{eq:1.3} w.r.t\  to $x_i$.
Then multiplying by $\frac{\partial w}{\partial x_i}$ and integrating on $\RN$, yields
\begin{align*}
&\int_{\RN} \Big\{ (1+2w^2) \left| \nabla \frac{\partial w}{\partial x_i} \right|^2 +2 \left( \frac{\partial w}{\partial x_i} \right)^2 |\nabla w|^2\\
&\qquad \quad +8w \frac{\partial w}{\partial x_i} \nabla w \cdot \nabla \frac{\partial w}{\partial x_i} +\left( \frac{\partial w}{\partial x_i} \right)^2 
-pw^{p-1} \left( \frac{\partial w}{\partial x_i} \right)^2 \Big\} \,dx=0.
\end{align*}
Then by the Schwarz inequality, Young inequality and from the boundedness of $w$, $\nabla w$, 
we get
\begin{equation}\label{eq:2.4}
\int_{\RN} \left|\nabla \frac{\partial w}{\partial x_i} \right|^2 \,dx
\le C \int_{\RN} \left| \frac{\partial w}{\partial x_i} \right|^2 \,dx
\le C \| w\|_{H^1}^2.
\end{equation}
Thus from (i), \ef{eq:2.4} and $|z_n|\le 1$, it follows that
\begin{align*}
\| w(\cdot+z_n)-w(\cdot)\|_{H^1}
&\le \| w(\cdot+z_n)-w(\cdot)-\nabla w(\cdot)\cdot z_n \|_{H^1}
+\| \nabla w(\cdot)\cdot z_n \|_{H^1} \\
&\le C|z_n|^2+C|z_n| \le C|z_n|.
\end{align*}
This completes the proof.
\end{proof}

\subsection{Decay estimates}
In this subsection, we show uniform estimates for global solutions of \ef{eq:1.1}-\ef{eq:1.2}. 
Our goal of this subsection is to prove the following proposition.

\begin{proposition}[Uniform decay]
	\label{prop:6.1}
Let $u(x,t)$ be a non-negative, radially non-increasing and globally defined solution of \ef{eq:1.1}-\ef{eq:1.2}. 
Then the following properties hold.
\begin{itemize}
\item[\rm(i)] $\displaystyle \sup_{t>0} \| u(\cdot,t) \|_{L^{\infty}(\RN)} <\infty$.
\item[\rm (ii)] $\displaystyle \lim_{|x|\to \infty} \sup_{t>0} u(x,t)=0$.
\end{itemize}
\end{proposition}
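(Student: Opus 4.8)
The plan is to combine the radial structure from Lemma \ref{lem:0.3} with the Comparison Principle (Lemma \ref{lem:0.2}), treating separately a fixed exterior region, where the reaction term is favourable, and the bounded core $B(0,R_0)$, where the subcritical growth must be exploited. Two preliminary observations guide the whole argument. First, by Lemma \ref{lem:0.3} the solution stays radial and radially non-increasing, so $\|u(\cdot,t)\|_{\LI}=u(0,t)$: claim (i) is a time-uniform bound on the value at the origin, while (ii) concerns the decay of the profile $r\mapsto\sup_{t>0}u(r,t)$. Second, wherever $0\le u\le 1$ one has $-u+|u|^{p-1}u\le 0$, so there $u$ is a subsolution of the diffusion--absorption inequality $u_t\le(1+2u^2)\Delta u+2u|\nabla u|^2-u$, which admits decaying stationary barriers; and, crucially, from \ef{eq:0.2} the solution vanishes at infinity \emph{for each fixed $t$}, which will let me locate maxima in unbounded exterior domains without presupposing any uniform-in-time decay.

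The heart of the matter is a \emph{universal bound near infinity}: a radius $R_0\ge 1$ and a constant, independent of $t$, with $u(R_0,t)\le 1$ for all $t>0$ (see the role of Remark \ref{rem:6.3}). Here the radial monotonicity is essential and this is where the hypotheses on $p$ intervene. Indeed, the standard substitution $v=\int_0^u\sqrt{1+2s^2}\,ds$ (cf.\ \cite{CJS}) turns the quasi-linear diffusion into the Laplacian and converts the reaction into one of effective exponent $(p-1)/2$; a direct computation shows that the upper bound $p<\frac{3N+2}{N-2}$ is exactly the Sobolev subcriticality $(p-1)/2<\frac{N+2}{N-2}$ for the transformed problem, while $p\ge 3$ makes the transformed reaction at least linear. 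Subcriticality is precisely what forbids, through a rescaling/doubling argument, a nontrivial limit profile concentrating at a finite distance from the origin, and yields the time-uniform bound at $|x|=R_0$.

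Granting this, I would first establish (ii). Since $u$ is radially non-increasing and $u(R_0,t)\le 1$, one has $u(x,t)\le 1$ on the whole exterior $\{|x|\ge R_0\}$, so there $u$ solves the favourable inequality above. I would then build a stationary supersolution $\Psi$ on $\{|x|\ge R_0\}$ with $\Psi\to 0$ as $|x|\to\infty$ and $\Psi\ge 1$ on $\partial B(0,R_0)$; the existence of such a \emph{decaying} barrier for the quasi-linear exterior operator is exactly where the hypothesis $N\ge 2$ enters (cf.\ Remark \ref{rem:6.5}). Comparing $u$ with $\Psi$ on the annuli $\{R_0\le|x|\le R\}$ and letting $R\to\infty$ --- the maximum of $u-\Psi$ being attained at a finite point thanks to \ef{eq:0.2} --- the Comparison Principle gives $u(x,t)\le\Psi(x)$ throughout the exterior and for all $t$; since $\Psi$ is a fixed function with $\Psi(x)\to 0$, this is exactly \ef{eq:1.4}, namely (ii).

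Finally, for the uniform bound (i) it remains to control $u$ on the core $B(0,R_0)$, where the radial profile is largest. On this fixed ball the equation is uniformly parabolic and its boundary datum $u(R_0,t)$ is already bounded by the previous step; rewriting it for $v=\int_0^u\sqrt{1+2s^2}\,ds$ gives a \emph{semi-linear} problem with clean Laplacian diffusion, a bounded positive coefficient in front of $v_t$, and the subcritical reaction of exponent $(p-1)/2$. Combining the integral bounds furnished by the non-increasing energy and by (ii) with standard interior parabolic regularity (e.g.\ a Moser-type iteration) for this transformed equation then upgrades them to a time-uniform $L^\infty$ bound on $u(0,t)$. The main obstacle throughout is that the principal coefficient $1+2u^2$ depends on the very quantity to be estimated, so neither the Liouville-type nonexistence behind the universal bound nor the construction of the decaying exterior barrier can simply be quoted from the semi-linear theory; both have to be carried out for the genuinely quasi-linear equation, which is precisely where, as the authors note, explicit sub- and supersolution constructions replace the semi-linear arguments.
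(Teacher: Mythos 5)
Your skeleton (a time-uniform bound on an exterior region, then a decaying stationary barrier for (ii), then an interior bound for (i)) matches the paper's, but two of your key steps do not hold up as stated.

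The first and most serious gap is the claimed universal bound $u(R_0,t)\le 1$ near infinity. Your justification (the substitution $v=\int_0^u\sqrt{1+2s^2}\,ds$ plus a ``rescaling/doubling argument'') is not carried out, and the constant $1$ is exactly what cannot be reached: the paper's Lemma \ref{lem:6.2} only yields $u\le K$ in $\{|x|\ge R_K\}$ for $K>\big(\tfrac{p+1}{2}\big)^{1/(p-1)}$, a threshold that is strictly larger than $1$ for every $p\ge 3$, and Remark \ref{rem:6.5} explains that lowering the threshold to $1$ is precisely the open obstruction (it is why the case $N=1$ is excluded). The mechanism behind Lemma \ref{lem:6.2} is also different from what you describe: it is a contradiction argument in which a plateau test function with value just above the threshold makes the localized energy $I_R$ negative, and a Levine-type concavity argument then forces a Dirichlet subsolution on $B(0,\tilde R)$ to blow up, contradicting global existence. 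Since your exterior barrier relies on $-u+u^p\le 0$, i.e.\ on $u\le 1$, the whole second stage of your argument collapses once only $u\le K>1$ is available. The paper avoids this by taking the barrier $Z(x)=w(|x|-R_0)$ built from the positive stationary solution $w$, using the Pohozaev-based gap $w(0)>\big(\tfrac{p+1}{2}\big)^{1/(p-1)}$ (valid only for $N\ge 2$) to ensure $Z\ge K\ge u$ on the matching region; no smallness of $u$ is needed.

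The second gap is in (i). The transformed equation for $v$ is not the clean semi-linear problem you describe: one gets $v_t=(1+2u^2)\Delta v-\sqrt{1+2u^2}\,(u-u^p)$, so the coefficient multiplying $\Delta v$ (equivalently, the coefficient $\tfrac{1}{1+2u^2}$ in front of $v_t$) degenerates exactly in the regime $u\to\infty$ that you are trying to exclude; Remark \ref{rem:6.8} makes the same point about the failure of the semi-linear scaling. A Moser iteration ``with bounded coefficients'' therefore cannot simply be quoted. The paper's route is different and uses an ingredient you omit entirely: global existence forces $I(u(\cdot,t))\ge 0$ for all $t$ (Lemma \ref{lem:6.6}, concavity method), and this sign condition is what kills the time derivative in the blow-up limit of the quasi-linear rescaling $v_n(y,\tau)=M_n^{-1}u\big(x_n+y M_n^{-(p-3)/2},t_n+\tau M_n^{-(p-1)}\big)$, reducing matters to the Liouville theorem for $-v\Delta v^2=v^p$ (Lemma \ref{lem:6.7}). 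Without either the sign of $I$ or a genuinely quasi-linear replacement for the Moser scheme, your interior bound is not established.
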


\noindent
The proof of Proposition \ref{prop:6.1} consists of several lemmas. 
First, we prove that $u$ is uniformly bounded near infinity.

\begin{lemma}[Universal bound near infinity]
	\label{lem:6.2}
Let $u(x,t)$ be a non-negative globally defined solution of \ef{eq:1.1}-\ef{eq:1.2} and assume that $u$ is radially non-increasing with respect to the origin. Then for any 
$$
K> \Big( \frac{p+1}{2} \Big)^{1 \over p-1},
$$
there exists $R_K>0$ such that
\begin{equation}\label{eq:6.1}
u(x,t) \le K \quad \hbox{for all} \ |x| \ge R_K \ \hbox{and} \ t>0.
\end{equation}
\end{lemma}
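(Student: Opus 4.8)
The plan is to argue by contradiction, reading \emph{globally defined} as the absence of finite-time blow-up, and to extract blow-up from the Lyapunov structure as soon as $u$ is forced to be large on balls of diverging radius. Suppose the conclusion fails for the given $K$. Then there are points $(x_n,t_n)$ with $R_n:=|x_n|\to\infty$ and $u(x_n,t_n)\ge K$. Since $u(\cdot,t_n)$ is radially non-increasing by Lemma~\ref{lem:0.3}, we get $u(\cdot,t_n)\ge K$ on all of $B(0,R_n)$, and hence
$$
y(t_n):=\|u(\cdot,t_n)\|_{L^2(\RN)}^2\ge K^2\,|B(0,R_n)|\longrightarrow\infty .
$$
Everything then reduces to showing that such growth of the $L^2$-norm is incompatible with global existence.

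First I would establish a differential inequality for $y(t)=\|u(\cdot,t)\|_{L^2(\RN)}^2$. Testing \eqref{eq:1.1} with $u$, integrating by parts (boundary terms at infinity vanish by the decay \eqref{eq:0.2} and the regularity of $u$), and removing $\int_{\RN}u^{p+1}\,dx$ through the functional $I$, one is led to
$$
\tfrac12\,y'(t)=\tfrac{p-1}{2}\,\|\nabla u\|_{L^2}^2+(p-3)\!\int_{\RN}u^2|\nabla u|^2\,dx+\tfrac{p-1}{2}\,y(t)-(p+1)\,I\big(u(\cdot,t)\big).
$$
Here $p\ge 3$ is decisive: it makes the two gradient integrals nonnegative so that they can be discarded, and combined with the monotonicity of $I$ (Lemma~\ref{energy-est}) this gives the clean bound $y'(t)\ge (p-1)\,y(t)-2(p+1)\,I(u_0)$.

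The next step is a blow-up criterion: if $I(u(\cdot,t_0))<0$ for some $t_0$, then $u$ is not global. Setting $J:=-I$, the energy identity reads $J'=\|u_t\|_{L^2}^2$, while the identity above (after discarding nonnegative terms) gives $y'\ge 2(p+1)J>0$ for $t\ge t_0$. Together with the Cauchy--Schwarz bound $(y')^2\le 4\,y\,\|u_t\|_{L^2}^2=4\,y\,J'$ this yields $J'/J\ge \tfrac{p+1}{2}\,y'/y$, hence $J\ge c\,y^{(p+1)/2}$ and finally $y'\ge c'\,y^{(p+1)/2}$ with exponent $(p+1)/2>1$, i.e.\ finite-time blow-up. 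Consequently global existence forces $I(u(\cdot,t))\ge 0$ for all $t$; then $\int_0^\infty\|u_t\|_{L^2}^2\,dt\le I(u_0)$, and from $\tfrac{d}{dt}\sqrt{y}\le\|u_t\|_{L^2}$ we obtain the at-most-linear growth $y(t)\le\big(\sqrt{y(0)}+\sqrt{I(u_0)\,t}\big)^2$.

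These two conclusions collide. For large $n$ we have $y(t_n)>2(p+1)I(u_0)/(p-1)$, and beyond this level the inequality $y'\ge (p-1)y-2(p+1)I(u_0)$ forces at least exponential growth of $y$ for $t\ge t_n$, contradicting the linear bound just obtained. (If instead $I(u_0)\le 0$, the blow-up criterion already contradicts global existence.) This proves the existence of $R_K$. I expect the blow-up criterion to be the main obstacle, both in justifying the concavity computation for the quasi-linear operator and in that it is the only place sensitive to the size of the constant: the threshold $K_0=((p+1)/2)^{1/(p-1)}$ is exactly where the potential density $\tfrac12 K^2-\tfrac1{p+1}K^{p+1}$ changes sign, so a more direct argument deriving $I(u(\cdot,t_n))<0$ from $u\ge K$ on $B(0,R_n)$ would indeed require $K>K_0$; that route is blocked by the uncontrolled gradient energy near the origin, which is precisely why I would route the blow-up through the dichotomy above.
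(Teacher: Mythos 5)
Your argument is correct, but it takes a genuinely different route from the paper's. The paper localizes: from $u>K_0$ on $B(0,\tilde R)$ it builds a compactly supported datum $v_{\tilde R}\le K_0$ with $I_{\tilde R}(v_{\tilde R})<0$, shows the associated Dirichlet flow on $B(0,\tilde R)$ blows up by a Levine-type computation, and transfers the blow-up to $u$ via the comparison principle (Lemma \ref{lem:0.2}); the threshold $\big(\tfrac{p+1}{2}\big)^{1/(p-1)}$ enters precisely because the plateau value $\zeta$ of $v_{\tilde R}$ must satisfy both $\zeta\le K_0$ and $\tfrac{\zeta^2}{2}-\tfrac{\zeta^{p+1}}{p+1}<0$. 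You instead work globally with the Lyapunov structure: your identity for $\tfrac12 y'$ is literally the one appearing in the proof of Lemma \ref{lem:6.6}, your blow-up criterion is exactly Lemma \ref{lem:6.6} (proved there independently of Lemma \ref{lem:6.2}, so you could cite it rather than re-derive it; no circularity), and the new ingredients are the at-most-linear bound $y(t)\le\big(\sqrt{y(0)}+\sqrt{I(u_0)\,t}\big)^2$ coming from $\int_0^\infty\|u_t\|_{L^2}^2\,dt\le I(u_0)$, versus the exponential growth forced by $y'\ge(p-1)y-2(p+1)I(u_0)$ once $y$ crosses $2(p+1)I(u_0)/(p-1)$. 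Both routes need $p\ge3$ for the signs of the gradient terms, and both implicitly use that ``globally defined'' propagates the $H^m$ (hence $L^2$) regularity of Lemma \ref{lem:0.1}, so that $y(t)$ is finite and $C^1$ --- the paper's Lemma \ref{lem:6.6} makes the same tacit assumption. What your approach buys is notable: the constant $K$ plays no role, so you actually prove the conclusion for \emph{every} $K>0$, and indeed the uniform $L^2$ bound $y(t)\le 2(p+1)I(u_0)/(p-1)$ combined with radial monotonicity gives $\sup_{t>0}u(x,t)\le C|x|^{-N/2}$ directly. This is strictly stronger than the stated lemma and, per Remark \ref{rem:6.5}, would remove the obstruction to treating $N=1$ in Lemma \ref{lem:6.4}; it is worth stating this explicitly and checking it against the authors' reasons for believing the threshold is needed, but I do not see a gap in the argument.
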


\begin{proof}
Suppose by contradiction that the claim fails. 
Then we find $K_0>((p+1)/2)^{1/(p-1)}$ such that for all $R>0$, 
$u(x_R, t_R) >K_0$ for some $| x_R| \ge R$ and $t_R>0$. 
For simplicity, we write $|x_R|= \tilde{R}$. 
Since $u$ is radially non-increasing, it follows that
\begin{equation}\label{eq:6.2}
u(x,t_R) >K_0 \quad \hbox{for all $x \in B(0,\tilde{R})$}.
\end{equation}
We claim that $u(x,t)$ must blow-up in finite time. We define a functional $I_R$ by
\begin{equation}\label{eq:6.3}
I_R(u):= \frac{1}{2} \int_{B(0,R)} \big((1+2u^2)|\nabla u|^2 +u^2\big) \,dx
	-\frac{1}{p+1} \int_{B(0,R)} |u|^{p+1} \,dx.
\end{equation}
First, for sufficiently large $R>1$, 
we show that there exists a function $v_R \in C_0^{\infty}(\RN)$ such that 
\begin{equation}\label{eq:6.4}
I_R(v_R)<0, \quad 
v_R \le K_0 \quad \hbox{in} \ B(0,R), \quad 
v_R=0 \quad \hbox{on} \ \partial B(0,R).
\end{equation}
To this aim, let 
\begin{equation}
	\label{gamma-cond}
\Big( \frac{p+1}{2}\Big)^{1\over p-1}<\zeta<K_0
\end{equation} 
be arbitrarily given and choose $v_R \in C_0^{\infty}(\RN)$ so that
$0 \le v_R(x) \le \zeta$ for all $x\in \RN$,
$$
v_R(x)= \zeta \quad \hbox{for} \ |x| \le R-1, \quad v_R(x)=0 \quad \hbox{for} \ |x| \ge R,
\quad  \ | \nabla v_R(x)| \le C\zeta.
$$
Then we have
\begin{align*}
I_R(v_R) & \le \int_{ \{R-1 \le |x| \le R\}} \Big(
	\frac{C^2\zeta^2}{2}(1+2v_R^2)+\frac{v_R^2}{2}-\frac{v_R^{p+1}}{p+1} \Big) \,dx
	+|B(0,R-1)| \Big( \frac{\zeta^2}{2}-\frac{\zeta^{p+1}}{p+1} \Big) \\
	&\le \big( R^N-(R-1)^N \big) C^2|B(0,1)| \zeta^2(1+\zeta^2)
	+(R-1)^N |B(0,1)| \Big( \frac{\zeta^2}{2}-\frac{\zeta^{p+1}}{p+1} \Big).
\end{align*}
By \eqref{gamma-cond}, it follows that $\zeta^2/2-\zeta^{p+1}/(p+1)<0$ and hence
$I_R(v_R) \to - \infty$, as $R \to \infty$. 
Thus, by taking large $R>1$, we obtain $I_R(v_R)<0$. 
Moreover, by the construction, we also have $v_R \le K_0$ in $B(0,R)$ and 
$v_R=0$ on $\partial B(0,R)$. Finally since $\tilde{R} \ge R$, 
we can replace $R$ by $\tilde{R}$. 
Next we consider the following auxiliary problem:
\begin{align}\label{eq:6.5}
	v_t=(1+2v^2)\Delta v+2v|\nabla v|^2-v+v^p 
	&\quad\,\, \hbox{in} \ B(0,\tilde{R}) \times [t_R,\infty), \\
	v=0 &\quad\,\, \hbox{on} \ \partial B(0,\tilde{R}) \times [t_R,\infty), \nonumber \\
	v(x,t_R)=v_{\tilde{R}}(x) &\quad\,\, \hbox{in} \ B(0,\tilde{R}). \nonumber
\end{align}
We claim that $v(x,t)$ blows up in finite time in a similar argument as \cite{GK2}. 
Indeed by a direct calculation, 
$\frac{d}{dt} I_{\tilde{R}} \big( v(\cdot ,t) \big) \le 0$ for $t \ge t_R$. 
Next from \ef{eq:6.5}, we obtain
\begin{align*}
	\frac{\partial}{\partial t}\Bigl( \frac{1}{2} \int_{B(0,\tilde{R})} v^2(x,t) \,dx \Bigr)
	&= - \int_{B(0,\tilde{R})} \Big((1+4v^2)|\nabla v|^2 +v^2 -v^{p+1}\Big) \,dx\\
	&= -4I_{\tilde{R}} \big( v(\cdot,t) \big)
	+\Big(1-\frac{4}{p+1} \Big) \int_{B(0,\tilde{R})} v^{p+1} \,dx\\
	&\ge -4I_{\tilde{R}}(v_{\tilde{R}}) +\frac{p-3}{p+1}
	\int_{B(0,\tilde{R})} v^{p+1} \,dx.
\end{align*}
By the H\"older inequality, we also have
$$
|B(0, \tilde{R})|^{- \frac{p-1}{2}} \Big( \int_{B(0,\tilde{R})} v^2 \,dx \Big)^{ p+1 \over 2} 
\le \int_{B(0,\tilde{R})} v^{p+1} \,dx. 
$$
Thus from $I_{\tilde{R}}(v_{\tilde{R}})<0$ and $p\geq 3$, we obtain
$$
\frac{\partial }{\partial t} \Big( \int_{B(0,\tilde{R})} v^2(x,t) \,dx \Big) 
\ge C \Big( \int_{B(0,\tilde{R})} v^2(x,t) \,dx \Big)^{p+1 \over 2} 
\quad \hbox{for all} \ t \ge t_R.
$$
This implies that $v(x,t)$ blows up in finite time. 
Now from \ef{eq:6.2} and \ef{eq:6.4}, one has
$$
L(v) \ge 0 \quad \hbox{in} \ B(0,\tilde{R}) \times [t_R,\infty), \quad 
v \le u \quad \hbox{on} \ \partial B(0,\tilde{R}) \times [t_R,\infty), \quad 
v(\cdot,t_R) \le u(\cdot,t_R) \ \hbox{in} \ B(0,\tilde{R}). 
$$
Then by Lemma \ref{lem:0.2}, it follows that 
$u(x,t) \ge v(x,t)$ for all $x\in B(0,\tilde{R})$ and $t \ge t_R$. 
Thus $u(x,t)$ must blow-up in finite time, 
contradicting the assumption that $u(x,t)$ is globally defined. 
\end{proof}

\begin{remark}\label{rem:6.3}\rm
Lemma \ref{lem:6.2} is the only part where the radial non-increase of $u(x,t)$ is needed. 
We can remove this assumption if we could show that
$$
\max_{x \in \partial B(0,R+2)} u(x,t) \le \inf_{x \in B(0,R)} u(x,t) 
\quad \hbox{for} \ t\in [0,T] \ \hbox{and large} \ R>0. 
$$
This type of estimates were obtained for porous medium equations, see \cite[Proposition 2.1]{AC}. 
However we don't know whether this estimate holds true for our quasi-linear parabolic problem.
\end{remark}

\noindent
Once we have the uniform boundedness near infinity, 
we can get the decay estimate at infinity. 

\begin{lemma}[Exponential decays]
	\label{lem:6.4}
Suppose $N \ge 2$ and let $u(x,t)$ be a non-negative global solution of \ef{eq:1.1}-\ef{eq:1.2}
which satisfies the uniform boundedness property \ef{eq:6.1}. 
Then there exist $\delta>0$, $C>0$ and $R_0>0$ such that
$$
\sup_{t>0} | D^k u(x,t) | \le Ce^{-\delta |x|} \quad 
\hbox{for all} \ |x| \ge R_0 \ \hbox{and} \ |k|\le 2.
$$
\end{lemma}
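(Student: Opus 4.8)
The plan is to prove the exponential decay of $u$ itself first and then to transfer it to the derivatives $D^k u$, $|k|\le 2$, by interior parabolic regularity. For the decay of $u$ I would compare $u$, on the exterior region $\{|x|\ge R_0\}\times(0,\infty)$, with a suitable \emph{time-independent}, radially decreasing supersolution $\bar u(x)$ of $L(\bar u)=0$, built out of the bound \eqref{eq:6.1}. The point to keep in mind is that along $\{|x|\ge R_0\}$ we only control $u\le K$ with $K>\big((p+1)/2\big)^{1/(p-1)}>1$, so the reaction $-u+u^{p}$ is \emph{positive} on $\{1<u\le K\}$; the whole difficulty is concentrated there.

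For the tail, a direct computation with $\bar u=Ce^{-\delta|x|}$ and $\delta\in(0,1)$ (writing $r=|x|$) gives
\[
L(\bar u)=\bar u\Big[(\delta^2-1)-\tfrac{(N-1)\delta}{r}+\bar u^{2}\big(4\delta^2-\tfrac{2(N-1)\delta}{r}\big)+\bar u^{p-1}\Big],
\]
so $\bar u$ is a supersolution wherever $\bar u\le\theta:=(1-\delta^2)^{1/(p-1)}<1$, the negative constant $\delta^2-1$ absorbing the superlinear term. It then remains to prepend, on the shell where $\theta<\bar u\le K$, a \emph{boundary layer} that drops from the value $K$ on $\{|x|=R_0\}$ down to $\theta$ while remaining a supersolution despite the positive reaction. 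Here I would use $N\ge 2$: for a radially decreasing \emph{concave} profile one has $\Delta\bar u=\bar u''+\tfrac{N-1}{r}\bar u'<0$, and the curvature term $\tfrac{N-1}{r}\bar u'<0$ (which vanishes only when $N=1$) supplies the negativity needed to beat $-\bar u+\bar u^{p}$. To neutralise the obstructive quasi-linear gradient term $2\bar u|\nabla\bar u|^2$, which is exactly what makes naive steep barriers fail, I would use the change of variables $v=f(u)$ with $f'(s)=\sqrt{1+2s^2}$: since $(1+2u^2)\Delta u+2u|\nabla u|^2=\sqrt{1+2u^2}\,\Delta v$, the inequality $L(\bar u)\le 0$ reduces, with no first-order term, to
\[
\Delta\bar v\le \frac{\bar u\,(1-\bar u^{p-1})}{\sqrt{1+2\bar u^{2}}}.
\]
One then takes $\bar v$ smooth, strictly concave on the layer (where the right-hand side is negative) and convex exponential on the tail (where it is positive), with a single inflection near the level $\bar u=1$; the curvature prescribed in the layer is dictated by the last inequality and is feasible precisely because $N\ge2$.

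Having produced $\bar u$, I would invoke the Comparison Principle of Lemma \ref{lem:0.2}. On $\{|x|=R_0\}$ one has $u\le K=\bar u$, and at $t=0$, choosing $R_0$ larger than the radius of $\supp u_0$, one has $u_0=0\le\bar u$. Since the exterior is unbounded I would work on the annuli $\{R_0\le|x|\le R_1\}\times(0,T)$ and add to $\bar u$ a time-independent corrector of the form $K\cosh(\mu|x|)/\cosh(\mu R_1)$, which dominates $u\le K$ on the outer face $\{|x|=R_1\}$, can be kept compatible with the supersolution inequality for $\mu$ small, and tends to $0$ pointwise as $R_1\to\infty$. Letting $R_1\to\infty$ (for every $T$, hence for all $t>0$) yields $\sup_{t>0}u(x,t)\le Ce^{-\delta|x|}$ for $|x|\ge R_0$.

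Finally, to pass from $u$ to $D^k u$, $|k|\le 2$, I would use interior parabolic estimates. On each unit cylinder $Q_x=B(x,1)\times(t-1,t)$ with $|x|\ge R_0+1$, equation \eqref{eq:1.1} is uniformly parabolic (ellipticity $\ge 1$) with coefficients controlled by $\|u\|_{L^\infty(Q_x)}$ and $\|\nabla u\|_{L^\infty(Q_x)}$; a local gradient (Bernstein) bound together with the Schauder estimate gives $\|u\|_{C^{2,1}(\tfrac12 Q_x)}\le C\|u\|_{L^\infty(Q_x)}$, and since $|y|\ge|x|-1$ on $Q_x$ the decay already proven yields $\|u\|_{L^\infty(Q_x)}\le Ce^{-\delta(|x|-1)}$, whence $|D^k u(x,t)|\le C'e^{-\delta|x|}$. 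I expect the main obstacle to be the boundary-layer supersolution of the second paragraph: reconciling the destabilising reaction on $\{1<u\le K\}$ with the quasi-linear gradient term is the delicate step, and it is exactly where the hypothesis $N\ge2$ is indispensable, since for $N=1$ the curvature term $\tfrac{N-1}{r}\bar u'$ vanishes and the construction collapses.
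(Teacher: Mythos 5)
Your overall architecture is the same as the paper's: prove the case $k=0$ by trapping $u$ on $\{|x|\ge R_0\}$ under a time-independent radial supersolution (using \eqref{eq:6.1} on the inner sphere and $\supp u_0\subset B(0,R_0)$ at $t=0$), then recover $|k|\le 2$ by interior parabolic regularity. Your tail computation, the substitution $v=f(u)$ with $f'(s)=\sqrt{1+2s^2}$, and the truncation with a $\cosh$ corrector are all sound. The gap is exactly at the step you flag, and it cannot be closed the way you propose: the term $\frac{N-1}{r}\bar v'$ is quantitatively too weak to build the boundary layer, because $r\ge R_0$ and $R_0$ is necessarily large (it is the $R_K$ of Lemma \ref{lem:6.2}, enlarged to contain $\supp u_0$). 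Concretely, write the supersolution inequality as $\bar v''+\frac{N-1}{r}\bar v'\le H(\bar v)$ with $H(f(s))=\frac{s-s^{p}}{\sqrt{1+2s^{2}}}$, and set $E=\frac12(\bar v')^2+V(\bar v)$ with $V(f(s))=\frac{s^{p+1}}{p+1}-\frac{s^{2}}{2}$. For a non-increasing profile one gets $E'\ge -\frac{N-1}{r}(\bar v')^2\ge-\frac{N-1}{R_0}(\bar v')^2$. Since $K>\big(\frac{p+1}{2}\big)^{1/(p-1)}$ forces $V(f(K))>0$ while $E\to 0$ at infinity, you need $\int_{R_0}^\infty(\bar v')^2\,dr\ge \frac{R_0}{N-1}V(f(K))$; as $\int_{R_0}^\infty|\bar v'|\,dr=f(K)$ is fixed, this forces $\sup|\bar v'|\ge \frac{R_0\,V(f(K))}{(N-1)f(K)}$. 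On the other hand, applying the same energy inequality from the steepest point onward (where $E\ge\frac12\sup|\bar v'|^2-\frac12$) caps $\sup|\bar v'|$ by a constant independent of $R_0$. For $R_0$ large these are incompatible: \emph{no} radially non-increasing, time-independent supersolution on $\{|x|\ge R_0\}$ can descend from the level $K$ at $|x|=R_0$ to $0$ at infinity. Making the layer steeper only injects kinetic energy that the damping rate $\frac{N-1}{R_0}$ cannot dissipate; this defeats concave layers just as it defeats exponential or power-law barriers.

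The ingredient your construction never uses is the one the paper leans on: by the Poh\v ozaev identity the positive radial solution $w$ of \eqref{eq:1.3} satisfies $w(0)>\big(\frac{p+1}{2}\big)^{1/(p-1)}$ when $N\ge2$, so one may choose $K$ strictly \emph{below} $w(0)$ and use the translated ground-state profile $Z(x)=w(|x|-R_0)$, a function that does descend from a level above $\big(\frac{p+1}{2}\big)^{1/(p-1)}$ to $0$ because it inherits the full (singular at $s=0$) damping $\frac{N-1}{s}$ of the ground-state ODE. That said, even this device requires care: a direct computation gives $L(Z)=(1+2w^2)(N-1)\big(\frac1r-\frac1{r-R_0}\big)w'(r-R_0)\ge 0$, i.e.\ the outward shift weakens the curvature term and makes $Z$ a subsolution pointwise, so the paper's comparison step itself needs further justification. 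For the radially non-increasing solutions to which this lemma is actually applied, a route that does close is to first derive a uniform-in-time $L^2$ bound by combining $\frac{d}{dt}\|u\|_{L^2}^2\ge(p-1)\|u\|_{L^2}^2-2(p+1)I(u_0)$ (from the identity in the proof of Lemma \ref{lem:6.6}) with $\|u(\cdot,t)\|_{L^2}\le\|u_0\|_{L^2}+\sqrt{t\,I(u_0)}$, then deduce $\sup_{t>0}u(x,t)\le C|x|^{-N/2}$ from radial monotonicity, and only afterwards run your exponential barrier on the region where $u\le\theta<1$, where the reaction is favorable and no boundary layer is needed.
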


\begin{proof}
By standard linear parabolic estimates, it suffices to consider the case $k=0$. 
Let $w$ be a positive solution of \ef{eq:1.3}.\ Then $w$ is radially decreasing 
and decays exponentially at infinity. 
Moreover we claim that $w(0)> ((p+1)/2)^{1/(p-1)}$. 
Indeed, $w$ satisfies the Poh\v ozaev identity
$$
0 \le \frac{N-2}{2N} \int_{\RN} (1+2w^2) |\nabla w|^2 \,dx 
= \int_{\RN} \Big( \frac{w^{p+1}}{p+1}-\frac{w^2}{2} \Big) \,dx. 
$$
For the proof, see \cite[Lemma 3.1]{CJS}. 
If the claim fails, 
then $w(x) < ((p+1)/2)^{1/(p-1)}$ for all $x\in \RN \setminus \{ 0\}$ by the monotonicity of $w$, 
which implies
$$
\int_{\RN} \left( \frac{w^{p+1}}{p+1}-\frac{w^2}{2} \right) \,dx < 0, 
$$
which is impossible. Now applying Lemma \ref{lem:6.2} with 
$$
\Big( \frac{p+1}{2} \Big)^{1\over p-1}<K<w(0), 
$$ 
there exists $R_0=R(K)$ such that
\begin{equation} \label{eq:6.6}
u(x,t) \le K \ \hbox{for} \ |x| \ge R_0 \ \hbox{and} \ t>0. 
\end{equation}
Moreover choosing $R_0$ larger if necessary, we may assume 
${\rm supp}(u_0) \subset B(0,R_0)$. Next, we put 
$$
Z(x,t):=Z(x)=w(|x|-R_0),\quad  \text{for $|x| \ge R_0$ and $t>0$}. 
$$
Then there exists $\varepsilon_0>0$ such that $Z(x) \ge K$ for $R_0 \le |x| \le R_0+\varepsilon_0$. From \ef{eq:6.6}, we get
\begin{align*}
L(Z) \le 0 &\quad \hbox{for} \ \ R_0\le |x| \le R_0+\varepsilon_0 \ \hbox{and} \ t>0, \\ 
Z \ge u &\quad \hbox{for} \ \ |x|=R_0, R_0+\varepsilon_0 \ \hbox{and} \ t>0, \\ 
Z(\cdot,0) \ge u_0 &\quad \hbox{for} \ \ R_0 \le |x| \le R_0+\varepsilon_0. 
\end{align*}
Thus by Lemma \ref{lem:0.2}, 
we obtain $u(x,t) \le U(x)$ for $R_0 \le |x| \le R_0+\varepsilon_0$ and $t>0$. 
Applying the Comparison Principle again, we have 
$u(x,t) \le U(x)$ for all $|x| \ge R_0$ and $t>0$. This completes the proof. 
\end{proof}

\begin{remark}\label{rem:6.5}\rm
In the proof of Lemma \ref{lem:6.4}, our construction of a supersolution $Z$ fails when $N=1$. 
In fact in this case, we claim that 
$$
w(0)= \Big( \frac{p+1}{2} \Big)^{1\over p-1}. 
$$
To see this, we multiply $w'$ by the one-dimensional version of equation \ef{eq:1.3} 
$$
(1+2w^2)w''+2w(w')^2 -w +w^p=0. 
$$
Integrating it over $[0,r]$, since $w'(0)=0$, we have 
$$
\frac{1}{2}\big( 1+2w^2(r) \big) \big( w'(r) \big)^2 +\frac{w^{p+1}(r)}{p+1}-\frac{w^2(r)}{2} 
= \frac{w^{p+1}(0)}{p+1}-\frac{w^2(0)}{2} \ \hbox{for} \ r>0. 
$$
Passing to a limit $r \to \infty$, the claim is proved. 
Since there is no gap between $w(0)$ and $((p+1)/2)^{1/(p-1)}$, 
we cannot apply Lemma \ref{lem:6.2} for $N=1$. 
But if we could replace $((p+1)/2)^{1/(p-1)}$ by 1 in Lemma \ref{lem:6.2}, 
we could construct a decaying supersolution $Z$ in the same way. 
More precisely, instead of \ef{eq:6.1}, let us assume that for any $K>1$, 
there exists $R_K>0$ such that 
$$
u(x,t) \le K, \quad \hbox{for all} \ |x| \ge R_K \ \hbox{and} \ t>0. 
$$
Then the same conclusion as Lemma \ref{lem:6.4} holds. 
On the other hand, replacing $((p+1)/2)^{1/(p-1)}$ by 1, 
our construction of a blow up subsolution $v$ in the proof of Lemma \ref{lem:6.2} fails. 
Thus we need another argument when $N=1$. 
We also remark that a construction of blowing up subsolutions for semi-linear problems as in \cite{delpino-cort} does not work for our problem.
\end{remark}

\begin{lemma}[Global existence and energy sign I]
	\label{lem:6.6}
Let $u$ be a global solution of \ef{eq:1.1}-\ef{eq:1.2}.
Then $I \big( u(\cdot,t) \big) \ge 0$ for every $t>0$. 
\end{lemma}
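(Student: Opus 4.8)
The plan is to argue by contradiction, showing that a single negative value of the energy forces the $L^2$–norm of $u$ to blow up in finite time, which is incompatible with global existence. Suppose, for contradiction, that $I(u(\cdot,t_0))<0$ for some $t_0>0$. By Lemma~\ref{energy-est} the map $t\mapsto I(u(\cdot,t))$ is non-increasing, so $I(u(\cdot,t))\le I(u(\cdot,t_0))<0$ for every $t\ge t_0$; in particular $u(\cdot,t)\not\equiv 0$ there. I would introduce the two quantities
\[
y(t):=\tfrac12\int_{\RN}u(x,t)^2\,dx,\qquad H(t):=-I\big(u(\cdot,t)\big)\ge -I\big(u(\cdot,t_0)\big)>0\quad(t\ge t_0),
\]
and note that $y(t_0)>0$ since $u(\cdot,t_0)\not\equiv 0$.

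Next I would derive three differential relations for $t\ge t_0$. First, a Nehari/virial identity: multiplying \ef{eq:1.1} by $u$, integrating over $\RN$ and integrating by parts (legitimate because $u(\cdot,t)\in H^m(\RN)$ has fast-decaying derivatives by Lemma~\ref{lem:0.1}) gives
\[
y'(t)=-\int_{\RN}(1+4u^2)|\nabla u|^2\,dx-\int_{\RN}u^2\,dx+\int_{\RN}|u|^{p+1}\,dx .
\]
Re-expressing the right-hand side through $I$ and using $p\ge 3$ yields
\[
y'(t)=4H(t)+\int_{\RN}\big(|\nabla u|^2+u^2\big)\,dx+\frac{p-3}{p+1}\int_{\RN}|u|^{p+1}\,dx\ge 4H(t)>0 .
\]
Second, the energy identity of Lemma~\ref{energy-est} reads $H'(t)=\int_{\RN}u_t^2\,dx$. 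Third, Cauchy–Schwarz gives
\[
\big(y'(t)\big)^2=\Big(\int_{\RN}u\,u_t\,dx\Big)^2\le\Big(\int_{\RN}u^2\,dx\Big)\Big(\int_{\RN}u_t^2\,dx\Big)=2\,y(t)\,H'(t).
\]

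I would then close the argument by a concavity-type coupling of these three relations. Multiplying $y'\ge 4H$ by $y'>0$ and inserting the result into the Cauchy–Schwarz bound gives $H'\ge (y')^2/(2y)\ge 2Hy'/y$, that is $(\log H)'\ge 2(\log y)'$. Integrating from $t_0$ to $t$ produces $H(t)\ge H(t_0)\,y(t_0)^{-2}\,y(t)^2$, and feeding this back into $y'\ge 4H$ yields the autonomous superlinear inequality
\[
y'(t)\ge c\,y(t)^2,\qquad c:=\frac{4H(t_0)}{y(t_0)^2}>0 .
\]
This forces $1/y(t)\le 1/y(t_0)-c(t-t_0)$, hence $y(t)\to+\infty$ at a finite time $T^*\le t_0+(c\,y(t_0))^{-1}$, i.e. $\|u(\cdot,t)\|_{L^2(\RN)}\to\infty$ as $t\uparrow T^*$. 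This contradicts global existence, for which $\|u(\cdot,t)\|_{L^2(\RN)}$ is finite at every finite time. Therefore no such $t_0$ exists and $I(u(\cdot,t))\ge 0$ for all $t>0$.

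The main obstacle is that, in contrast with Lemma~\ref{lem:6.2} where the problem lives on a ball and the H\"older inequality converts $\int|u|^{p+1}$ into a power of $\int u^2$ (yielding directly $y'\ge C\,y^{(p+1)/2}$), here the domain is all of $\RN$ and no such finite-measure estimate is available. The substitute is the concavity mechanism above, which manufactures a superlinear ODE purely from the energy identity and Cauchy–Schwarz. Two points require care in a full write-up: the integration by parts and the differentiation $y'(t)=\int_{\RN}u\,u_t\,dx$ must be justified via the $H^m$-regularity and decay from Lemma~\ref{lem:0.1}, and the hypothesis $p\ge 3$ is exactly what makes the coefficient $\tfrac{p-3}{p+1}$ nonnegative, so that the crucial bound $y'\ge 4H$ holds.
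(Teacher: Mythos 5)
Your proof is correct, and while it belongs to the same family as the paper's (Levine's concavity method, which the paper cites explicitly), the way you close the argument is genuinely different. Both proofs rest on the same three ingredients: the Nehari-type identity obtained by multiplying \ef{eq:1.1} by $u$ and integrating by parts (with $p\ge 3$ making the leftover terms nonnegative), the energy identity of Lemma \ref{energy-est}, and Cauchy--Schwarz. The paper introduces the double time integral $M(t)=\tfrac12\int_{t_0}^t\|u(\cdot,s)\|_{L^2}^2\,ds$, shows $M''=\int_{\R^N} uu_t\,dx\ge -(p+1)I(u(\cdot,t_0))>0$, applies Cauchy--Schwarz in both $x$ and $t$ to reach $MM''\ge(1+\alpha)(M')^2$ for large $t$, and derives a contradiction from the concavity of the positive function $M^{-\alpha}$ tending to zero. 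You instead work directly with $y(t)=\tfrac12\|u(\cdot,t)\|_{L^2}^2$ and $H=-I$: the pointwise-in-time Cauchy--Schwarz $(y')^2\le 2yH'$ combined with $y'\ge 4H$ gives $(\log H)'\ge 2(\log y)'$, hence $H(t)\ge H(t_0)y(t_0)^{-2}y(t)^2$ and the Riccati inequality $y'\ge cy^2$, which forces $\|u(\cdot,t)\|_{L^2}$ itself to blow up by an explicit finite time. Your route avoids the auxiliary functional $M$ and the $M^{-\alpha}$ trick, and buys a more concrete conclusion (an explicit upper bound on the $L^2$ blow-up time), at the cost of a slightly less standard logarithmic comparison step; the paper's version is the textbook form of the method. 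The only points requiring the care you already flagged --- the legitimacy of $y'(t)=\int_{\R^N} uu_t\,dx$ and of the integrations by parts --- are covered by the $H^m$ regularity and decay supplied by Lemma \ref{lem:0.1}.
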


\begin{proof}
We use the concavity method as in \cite{Le}. 
It suffices to show that if $I \big( u(\cdot,t_0) \big) <0$ for some $t_0>0$, 
then $u(x,t)$ must blow-up in finite time.
To this end, suppose by contradiction that $I \big( u(\cdot,t_0) \big) <0$ but
$u$ is globally defined.
First multiplying \ef{eq:1.1} by $u$ and integrating it over $\RN$, one has 
$$
\int_{\RN} uu_t \,dx 
+\int_{\RN} \big((1+4u^2)|\nabla u|^2+u^2 \big)dx-\int_{\RN} |u|^{p+1} \,dx =0. 
$$
Thus by the definition of $I(u)$, it follows that 
$$
\frac{p-1}{2} \int_{\RN} \big(|\nabla u|^2+u^2\big) dx +(p-3) \int_{\RN} u^2|\nabla u|^2 \,dx 
=(p+1)I(u)+\int_{\RN} uu_t \,dx. 
$$
We put 
$$
M(t):= \frac{1}{2} \int_{t_0}^t \| u(\cdot,s) \|_{L^2(\RN)}^2 \,ds.$$
Then one has $M'(t)=\frac{1}{2} \| u(\cdot,t) \|_{L^2}^2$.
Moreover by Lemma \ref{energy-est} and from $p \ge 3$, we also have
\begin{align*}
M''(t) &= \int_{\RN} uu_t \,dx \\
&= -(p+1) I \big( u(\cdot,t) \big) 
+\frac{p-1}{2} \int_{\RN} \big( \nabla u|^2 +u^2 \big) \,dx
+(p-3) \int_{\RN} u^2 |\nabla u|^2 \,dx \\
&\ge -(p+1) I \big( u(\cdot,t_0) \big) >0 \quad \hbox{for} \ t \ge t_0.
\end{align*}
This implies that $M'(t) \to \infty$ and $M(t) \to \infty$ as $t \to \infty$.
Next by Lemma \ref{energy-est}, it follows that
$$
\int_{t_0}^t \| u_t(\cdot,s) \|_{L^2}^2 \,ds
=I \big( u(\cdot,t_0) \big)-I \big( u(\cdot,t) \big)
<-I \big( u(\cdot,t) \big),$$
which implies that
$$
M''(t) \ge -(p+1) I \big( u(\cdot,t) \big)
> (p+1) \int_{t_0}^t \| u_t(\cdot,s) \|_{L^2}^2 \,ds.$$
Thus we get
\begin{align*}
M(t)M''(t) & \ge \frac{p+1}{2} \left( \int_{t_0}^t \| u(\cdot,s) \|_{L^2}^2 \,ds \right)
\left( \int_{t_0}^t \| u_t(\cdot,s) \|_{L^2}^2 \,ds \right) \\
&\ge \frac{p+1}{2} \left( \int_{t_0}^t u(x,s)u_t(x,s) \,dx\,ds \right)^2 \\
&= \frac{p+1}{2} \big( M'(t)-M'(t_0) \big)^2.
\end{align*}
Since $M'(t) \to \infty$ as $t \to \infty$, there exists $\alpha>0$ and $t_1 \ge t_0$
such that  
$$
M(t)M''(t) \ge (1+\alpha) M'(t)^2 \quad \ \hbox{for} \ t \ge t_1.$$
This shows that $M^{-\alpha}(t)$ is concave on $[t_1,\infty)$,
contradicting to $M^{-\alpha}(t) \to 0$ as $t \to \infty$.
Thus, the assertion holds. 
\end{proof}

\noindent
Finally we show the following lemma.

\begin{lemma}[Global existence and energy sign II]
	\label{lem:6.7}
Let $0<T \le \infty$. 
Let $u$ be a non-negative solution of \ef{eq:1.1}-\ef{eq:1.2} 
and assume that $I \big( u(\cdot,t) \big) \ge 0$ for all $t\in (0,L)$.
Then then there exists $C>0$ such that 
$$
\sup_{t\in (0,L)} \| u(\cdot,t) \|_{L^{\infty}(\RN)} \le C.
$$
\end{lemma}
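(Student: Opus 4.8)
The plan is to run the argument in three stages: first turn the sign hypothesis into a two-sided control of the energy along the flow, then upgrade this to a time-uniform $L^2$ (hence local-in-time $H^1$) bound, and finally promote the latter to an $L^\infty$ bound by a subcritical parabolic iteration. Throughout I write $y(t):=\|u(\cdot,t)\|_{L^2(\RN)}^2$ and abbreviate $I_0:=I(u_0)$.

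\medskip\noindent\emph{Stage 1 (energy sandwich and dissipation).} By Lemma \ref{energy-est} the map $t\mapsto I(u(\cdot,t))$ is non-increasing, so together with the hypothesis $I(u(\cdot,t))\ge 0$ one gets $0\le I(u(\cdot,t))\le I_0$ for all $t\in(0,L)$. Integrating the identity of Lemma \ref{energy-est} also yields the crucial dissipation bound $\int_0^L\|u_t(\cdot,s)\|_{L^2(\RN)}^2\,ds\le I_0$, which will be the tool that excludes grow-up.

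\medskip\noindent\emph{Stage 2 (uniform $L^2$ bound).} Testing \eqref{eq:1.1} with $u$ and integrating gives, exactly as in the proof of Lemma \ref{lem:6.6}, the identity
\[
\tfrac12\,y'(t)=\frac{p-1}{2}\int_{\RN}\big(|\nabla u|^2+u^2\big)\,dx+(p-3)\int_{\RN} u^2|\nabla u|^2\,dx-(p+1)\,I(u(\cdot,t)).
\]
Since $p\ge 3$, the first two terms are non-negative, and $I(u(\cdot,t))\le I_0$, whence $y'(t)\ge (p-1)\,y(t)-2(p+1)I_0$. Set the threshold $y_\ast:=2(p+1)I_0/(p-1)$. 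I would then argue that $y$ can never climb above $\max(y(0),y_\ast)$: once $y(t_0)>y_\ast$, the lower bound forces $y'>0$, so $y$ is monotone and grows at least exponentially for $t>t_0$; but the Cauchy--Schwarz inequality $\tfrac12 y'=\int_{\RN}uu_t\,dx\le \sqrt{y}\,\|u_t\|_{L^2(\RN)}$ then gives $\|u_t\|_{L^2(\RN)}^2\ge (y')^2/(4y)\gtrsim y$, so $\int_{t_0}^{\infty}\|u_t\|_{L^2(\RN)}^2\,ds=\infty$, contradicting the Stage 1 dissipation bound. Hence $\sup_{t\in(0,L)}y(t)\le C$. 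Averaging the displayed identity over unit time windows and using $0\le I\le I_0$ with the just-obtained $L^2$ bound, I would moreover extract time-uniform averaged bounds $\int_t^{t+1}\|u(\cdot,s)\|_{\HN(\RN)}^2\,ds\le C$ and $\int_t^{t+1}\int_{\RN}u^2|\nabla u|^2\,dx\,ds\le C$, i.e. uniform local-in-time $H^1$ control of $u$ and of $v:=u^2$.

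\medskip\noindent\emph{Stage 3 (bootstrap to $L^\infty$).} Writing \eqref{eq:1.1} in the divergence form \eqref{divergence}, the equation is uniformly parabolic with the absorbing lower-order term $-2u|\nabla u|^2\le 0$. Setting $v=u^2$, one has $\int_{\RN}u^2|\nabla u|^2\,dx=\tfrac14\|\nabla v\|_{L^2(\RN)}^2$ and $\|u\|_{L^{p+1}(\RN)}^{p+1}=\|v\|_{L^{(p+1)/2}(\RN)}^{(p+1)/2}$, and the exponent restriction $p<(3N+2)/(N-2)$ is precisely the statement that $(p+1)/2<2^\ast=2N/(N-2)$, so the nonlinearity is $H^1$-subcritical for $v$. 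Feeding the Stage 2 bounds into a parabolic De Giorgi--Nash--Moser iteration on \eqref{divergence} then promotes them to the desired uniform estimate $\sup_{t\in(0,L)}\|u(\cdot,t)\|_{L^\infty(\RN)}\le C$.

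\medskip\noindent The main obstacle is Stage 2. The condition $I\ge 0$ is only a \emph{static} constraint and does not by itself bound $\|u\|_{L^\infty}$—a thin tall spike can keep $I$ non-negative while its height diverges—so excluding grow-up genuinely requires the gradient-flow structure, namely the finiteness of $\int_0^\infty\|u_t\|_{L^2(\RN)}^2$ played against the differential inequality for $y$. It is exactly here that the hypothesis $p\ge 3$ is indispensable, since it is what makes the term $\int_{\RN}u^2|\nabla u|^2\,dx$ enter with a favourable sign in the $L^2$ identity.
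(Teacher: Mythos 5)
Your Stages 1--2 are sound in substance: the identity for $\tfrac12 y'(t)$ is exactly the one the paper derives inside Lemma \ref{lem:6.6}, the inequality $y'\ge (p-1)y-2(p+1)I_0$ is correct, and playing it against the dissipation bound $\int_0^{\infty}\|u_t(\cdot,s)\|_{L^2}^2\,ds\le I_0$ does force $\sup_t\|u(\cdot,t)\|_{L^2}^2\le C$ when $L=\infty$. Two caveats: your divergence argument needs an infinite time horizon, so for finite $L$ you should instead use $\frac{d}{dt}\sqrt{y}\le\|u_t\|_{L^2}$, which gives $\sqrt{y(t)}\le\sqrt{y(0)}+\sqrt{L\,I_0}$ directly; and when $p=3$ the coefficient of $\int_{\RN}u^2|\nabla u|^2\,dx$ in your identity vanishes, so the claimed time-averaged bound on that quantity does not follow from it (and it is not controlled by the $L^2$ and $H^1$ bounds alone when $N\ge 5$).

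The genuine gap is Stage 3, which is where the entire content of the lemma lives. The phrase ``feeding the Stage 2 bounds into a parabolic De Giorgi--Nash--Moser iteration'' conceals the hard step. The zeroth-order coefficient $V=|u|^{p-1}$ is supercritical relative to the regularity of $u$ itself: interpolating your bounds $u\in L^{\infty}_tL^2_x$ and $\int_t^{t+1}\|u(\cdot,s)\|_{L^{4N/(N-2)}}^4\,ds\le C$ places $V$ in a Lebesgue class satisfying the standard condition $\frac{N}{2r}+\frac1q<1$ only for $p<1+\frac{8}{N}$, which is far below $\frac{3N+2}{N-2}$ (for $N=4$ it excludes the entire admissible range $p\ge 3$). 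So the classical local boundedness theory does not apply off the shelf; one must build a bespoke iteration around $v=u^2$ (testing with powers $u^{2k-1}$ so that the term $u^2|\nabla u|^2$ upgrades the diffusion), verify that it closes precisely for $p<\frac{3N+2}{N-2}$, and check uniformity over balls to get a bound uniform in $x$ and $t$. None of this is carried out, and it is not a citation-level step. The paper's proof avoids it entirely: assuming $M_n:=\|u(\cdot,t_n)\|_{L^{\infty}}\to\infty$, it rescales $v_n(y,\tau)=M_n^{-1}u\big(x_n+M_n^{-(p-3)/2}y,\,t_n+M_n^{-(p-1)}\tau\big)$, uses the dissipation bound (this is the only place $I\ge0$ enters) together with the scaling factor $M_n^{((N-2)p-3N-2)/2}\to0$ (this is exactly where $p<\frac{3N+2}{N-2}$ enters) to show that the blow-up limit is a \emph{stationary} bounded non-negative solution of $-v\Delta v^2=v^p$, and concludes by a Liouville-type nonexistence result (where $p\ge3$ enters, both through the scaling exponent and the nonexistence statement). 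Either execute the quasilinear Moser iteration in full, or switch to the rescaling/Liouville route.
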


\begin{proof}
Suppose that there exists a sequence $\{t_n\}_{n\in\N} \subset (0,L)$ 
converging to $L$ such that 
$$
M_n:= \| u(\cdot,t_n) \|_{L^{\infty}(\R^N)} \to \infty. 
$$
We derive a contradiction by using a blow-up type argument. 
Let $\{x_n\}_{n\in\N} \subset \RN$ be such that
$$
\frac{M_n}{2} \le u(x_n,t_n) \le M_n, 
$$
and consider the sequence 
$$
v_n(y,\tau):= 
\frac{1}{M_n} u \left( x_n+\frac{y}{M_n^{p-3 \over 2}}, t_n+\frac{\tau}{M_n^{p-1}} \right). 
$$
Then by a direct calculation, one has 
$$
\frac{\partial v_n}{\partial \tau}
=\frac{1}{M_n^2} \Delta v_n +v_n \Delta v_n^2-\frac{1}{M_n^{p-1}}v_n
 +v_n^p \quad \hbox{in} \ \RN \times ( -M_n^{p-1} t_n ,0]. 
$$
Passing to a subsequence and using a diagonal argument as in \cite{GK1}, we have 
$$
v_n \to v \quad \hbox{in} \ C^{2,1}_{{\rm loc}}(\RN\times (-\infty,0]), 
$$ 
where $v$ is a non-negative solution of the following parabolic problem 
$$
\frac{\partial v}{\partial \tau} =v\Delta v^2 +v^p \quad \hbox{in} \ \RN\times (-\infty,0]. 
$$
Now we claim that $v_{\tau} \equiv 0$. 
To this end, we observe that by Lemma \ref{energy-est} that 
$$
\int_{0}^{t_0} \int_{\RN} | u_t(x,t)|^2 \,dx\,dt 
\le I\big( u(\cdot,0) \big)-I\big( u(\cdot,t_0)\big) \quad \hbox{for any} \ t_0>0. 
$$
Since $I\big( u(\cdot,t) \big) \ge 0$ for all $t>0$, we have
\begin{align*}
	\int_{-\infty}^0 \int_{\R^N} \left| \frac{\partial }{\partial \tau} v_n(y,\tau) \right|^2 \,dy \,d\tau 
	&= \frac{1}{M_n^{2p}} \int_{-\infty}^0 \int_{\R^N} \left| u_t \left( x_n+\frac{y}{M_n^{p-3 \over 2}}, t_n+\frac{\tau}{M_n^{p-1}} \right) \right|^2 \,dy\,d\tau \\ 
	&\le M_n^{ \frac{(N-2)p-3N-2}{2}} \int_0^{\infty} \int_{\RN} | u_t(x,t)|^2 \,dx \,dt \\ 
	&\le  M_n^{ \frac{(N-2)p-3N-2}{2}} I \big( u(\cdot,0) \big). 
\end{align*}
Since $p< (3N+2)/(N-2)$ and $M_n \to \infty$, it follows that $v_{\tau} \equiv 0$. 
Now since $v_{\tau} \equiv 0$, 
$v$ is a nontrivial, non-negative bounded solution of the following nonlinear elliptic problem 
\begin{equation} \label{eq:6.8}
-v\Delta v^2 = v^p \quad \hbox{in} \ \RN. 
\end{equation}
If $3 < p< (3N+2)/(N-2)$, it follows that $v \equiv 0$ by applying the Liouville theorem to $v^2$. 
This contradicts to the fact $v(0) \ge 1/2$. 
On the other hand if $p=3$, it follows that $v^2$ is a nontrivial bounded eigenfunction of 
$-\Delta$ in $\RN$ associated with the eigenvalue $1$. But this is impossible. 
Thus in both cases, we obtain a contradiction and hence the proof is complete. 
\end{proof}

\begin{remark}\label{rem:6.8}\rm
We note that in the proof of Lemma \ref{lem:6.7}, 
we need the assumption $3\le p$ to obtain the non-existence of nontrivial, non-negative bounded solutions of \ef{eq:6.8}. 
We also observe that if we adopt the scaling 
$$
v_n(y,\tau):= 
\frac{1}{M_n} u \Big( x_n+\frac{y}{M_n^{p-1 \over 2}}, t_n+\frac{\tau}{M_n^{p-1}} \Big) 
$$
as in \cite{delpino-cort}, then we obtain the following rescaled problem 
$$
\frac{\partial v_n}{\partial \tau} 
=\Delta v_n +M_n^2 v_n \Delta v_n^2 -\frac{1}{M_n^{p-1}}v_n +v_n^p 
\ \hbox{in} \ \RN \times ( -M_n^{p-1} t_n ,0]. 
$$
Hence this scaling does not work in our case due to the term $M_n^2 v_n \Delta  v_n^2$. 
\end{remark}

\noindent
Now we can see that Proposition \ref{prop:6.1} follows from Lemmas \ref{lem:0.3}, \ref{lem:6.4}, \ref{lem:6.6} and \ref{lem:6.7}.

\subsection{Some technical results}
In this subsection, we prepare some technical lemmas to prove Theorem \ref{thm:1.1}. 
First we shall need the following result. 

\begin{lemma}[$\omega$-limit]
	\label{lem:3.1}
Let $u$ be a non-negative, bounded and globally defined 
solution of \ef{eq:1.1}-\ef{eq:1.2} satisfying the uniform decay condition \ef{eq:1.4}. 
Then $\{ u(\cdot,t_n)\}$ has a uniformly convergent subsequence in $\RN$ 
for any sequence $\{t_n\}_{n\in\N}$ with $t_n \to \infty$. 
In particular, the $\omega$-limit set $\Omega(u)$ is well-defined. 
Furthermore, the set $\{ u(\cdot,t_n)\}$ is relatively compact in $C^1(\RN)$. 
\end{lemma}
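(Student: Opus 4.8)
The plan is to combine uniform-in-time interior parabolic estimates with the tightness at infinity encoded in \ef{eq:1.4}. Since $u$ is a global classical solution, it is already smooth for $t>0$, so the only issue is to obtain bounds that are \emph{uniform} in $t$. Granting such bounds, Arzel\`a--Ascoli gives relative compactness of $\{u(\cdot,t_n)\}$ on every ball, a diagonal argument over an exhaustion of $\RN$ by balls produces a subsequence converging in $C^2_{\mathrm{loc}}(\RN)$, and the uniform decay \ef{eq:1.4} then upgrades this local convergence to convergence in $C^0(\RN)$ and, after controlling $\nabla u$ at infinity, in $C^1(\RN)$.

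For the interior estimates I would proceed as follows. The hypotheses give $\|u\|_{L^\infty(\RN\times(0,\infty))}=:M_0<\infty$, and by Section 2.1 the operator $L$ is uniformly parabolic with ellipticity constant in $[1,1+2M_0^2]$. Writing the equation in the divergence form \ef{divergence}, the structural conditions of \cite{Tr} (already verified in the proof of Lemma \ref{lem:0.3}) hold, so De Giorgi--Nash--Moser theory yields a uniform interior H\"older bound for $u$ (for some $\alpha\in(0,1)$); the interior gradient estimate for quasilinear parabolic equations (see \cite{LSU}) then gives, for each $\tau>0$, a constant $C_1=C_1(\tau,M_0)$ with $\sup_{t\ge\tau}\|\nabla u(\cdot,t)\|_{L^\infty(\RN)}\le C_1$. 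With $u$ and $\nabla u$ now uniformly bounded and H\"older continuous, the coefficient $1+2u^2$ and the lower order terms are uniformly H\"older, so the parabolic Schauder estimates \cite[Theorem 8.1]{LSU} furnish a constant $C_2=C_2(\tau)$ with $\sup_{t\ge\tau}\|u(\cdot,t)\|_{C^{2+\alpha}(\RN)}\le C_2$. This is the technical heart of the proof; the one delicate point is the quadratic gradient growth of the term $2u|\nabla u|^2$ in $B(u,\nabla u)$, which is admissible (\emph{natural growth}) precisely because $u$ is \emph{a priori} bounded.

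Granting these bounds, fix $t_n\to\infty$. On each ball $B(0,R)$ the family $\{u(\cdot,t_n)\}$ is bounded in $C^{2+\alpha}$, hence precompact in $C^2(\overline{B(0,R)})$; a diagonal extraction over $R\in\N$ produces a subsequence (not relabelled) and a limit $w$ with $u(\cdot,t_n)\to w$ in $C^2_{\mathrm{loc}}(\RN)$, so in particular $u(\cdot,t_n)\to w$ pointwise on $\RN$. To reach $C^0(\RN)$, fix $\eps>0$ and use \ef{eq:1.4} to choose $R$ with $\sup_{t>0}u(x,t)\le\eps$ for $|x|\ge R$; then $0\le w(x)\le\eps$ there as well, so $|u(x,t_n)-w(x)|\le\eps$ for $|x|\ge R$ and all $n$, while on $\overline{B(0,R)}$ the convergence is uniform. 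Hence $\|u(\cdot,t_n)-w\|_{L^\infty(\RN)}\to0$, which proves the first assertion and shows that $\Omega(u)$ is nonempty and well-defined.

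It remains to promote this to $C^1(\RN)$. Since $\{u(\cdot,t):t\ge1\}$ is bounded in $C^2(\RN)$ while $\sup_{t>0}u(x,t)\to0$ as $|x|\to\infty$ by \ef{eq:1.4}, the interpolation inequality $\sup_{B(x,1)}|\nabla u|\le C\big(\sup_{B(x,2)}|u|+(\sup_{B(x,2)}|u|)^{1/2}(\sup_{B(x,2)}|D^2u|)^{1/2}\big)$ forces $\sup_{t>0}|\nabla u(x,t)|\to0$ as $|x|\to\infty$ as well. Repeating the $\eps$-argument above for $\nabla u$ --- using $C^1_{\mathrm{loc}}$ convergence on balls together with this uniform decay of the gradient at infinity (and noting $|\nabla w(x)|\le\sup_{t>0}|\nabla u(x,t)|$) --- gives $u(\cdot,t_n)\to w$ in $C^1(\RN)$. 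Thus $\{u(\cdot,t_n)\}$ is relatively compact in $C^1(\RN)$, completing the proof.
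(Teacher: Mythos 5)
Your proof is correct, and it reaches the conclusion by a genuinely different route from the paper's. You work entirely on the "solution side": uniform-in-time interior estimates (De Giorgi--Nash--Moser, gradient bounds, Schauder) give a bound on $\sup_{t\ge\tau}\|u(\cdot,t)\|_{C^{2+\alpha}(\RN)}$, after which Arzel\`a--Ascoli plus a diagonal argument produce a $C^2_{\mathrm{loc}}$-convergent subsequence, and the tail is controlled directly by \eqref{eq:1.4} together with the interpolation inequality for $\nabla u$, which converts decay of $u$ plus a uniform $D^2u$ bound into decay of the gradient. The paper instead extracts a weak $H^1$ limit $w$ (using the energy bound of Lemma \ref{lem:2.1}), identifies it as $0$ or a positive stationary solution via Lemma \ref{limite} so that $w$ decays exponentially, deduces $L^2$ convergence, and then upgrades to $C^0$ and $C^1$ convergence through higher-order local regularity for the difference $u(\cdot,t_n)-w$ on bounded domains. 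Your approach is more self-contained for the compactness claim itself and avoids identifying the limit, at the price of invoking the full interior $C^{2+\alpha}$ theory explicitly (which the paper also uses, but less visibly, via \eqref{eq:0.1} and its exponential-decay estimates for $|D^ku|$, $|k|\le 2$, in Lemma \ref{lem:6.4}); the paper's route yields as a by-product that the limit solves \eqref{eq:1.3} and decays exponentially, information it needs in later sections. Two cosmetic remarks: on $|x|\ge R$ the bound is $|u(x,t_n)-w(x)|\le 2\eps$ rather than $\eps$, and in the $C^1$ step you should state that the limit of the gradients of the $C^1_{\mathrm{loc}}$-convergent subsequence is $\nabla w$, so that $|\nabla u(x,t_n)-\nabla w(x)|\le 2\sup_{t>0}|\nabla u(x,t)|$ in the tail; neither affects the argument.
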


\begin{proof}
We know that $\| u(\cdot,t)\|_{L^{\infty}(\RN)}$ is uniformly bounded. 
Moreover by assumption \ef{eq:1.4}, we can show that the function $\sup_{t>0} | D^ku(x,t)|$ 
decay exponentially for $|k|\le 2$ (cf.\ the proof of Lemma \ref{lem:6.4}). 
Applying the Schauder estimate, we also have the uniform boundedness of 
$\| \nabla u(\cdot,t)\|_{L^{\infty}(\RN)}$.
Let $\{t_n\}_{n\in\N}$ be a sequence such that $t_n \to \infty$. 
Then by (i) of Lemma \ref{lem:2.1}, it follows that $\| u(\cdot,t_n)\|_{H^1} \le C$. 
Passing to a subsequence, we may assume that $u(\cdot,t_n) \rightharpoonup w$ in $H^1(\RN)$ for some $w\in H^1(\RN)$. 
Then arguing as in Lemma \ref{limite}, one can see that 
either $w=0$ or $w$ is a positive solution of \ef{eq:1.3}. 
In particular, $w$ decays exponentially at infinity. 
Arguing as for the proof of (i) of Lemma \ref{lem:2.3}, we have 
$\| u(\cdot,t_n)-w(\cdot) \|_{L^2} \to 0$. 
Let $U$ be any bounded domain. 
Then applying higher order regularity theory, we get 
$$
\| u(\cdot,t_n)-w(\cdot) \|_{H^m(V)}
\le C \| u(\cdot,t_n)-w(\cdot) \|_{L^2(U)} 
$$
for any $V \subset \subset U$ and $m \ge \frac{N}{2}+1$. 
By the Sobolev embedding $H^m(V) \hookrightarrow C^0(V)$, 
passing to a subsequence, we have that $u(\cdot,t_n) \to w$ uniformly on $V$. 
Since $U$ is arbitrary and $u(x,t_n)$ decays uniformly at infinity, 
it follows that $u(\cdot,t_n) \to w(\cdot)$ uniformly in $\RN$. 
Finally since $m \ge \frac{N}{2}+1$, we have the continuous embedding 
$H^m_{loc}(\RN) \hookrightarrow C^1_{loc}(\RN)$. 
Together with the uniform exponential decay of $|D^k(u(\cdot,t_n)|$ for $|k|\le 2$, 
passing to a subsequence if necessary, 
it follows that $u(\cdot,t_n) \to w$ in $C^1(\RN)$. This completes the proof.
\end{proof}

\noindent
To finish the proof of Theorem \ref{thm:1.1}, we have to prove \ef{eq:1.5} 
and show that the limit $w \in \Omega(u)$ is independent of the choice of the sequence $\{t_n\}$. 
To this end, we put 
$$
\eta(y,t):=\left( \int_0^T \| u(\cdot,s+t)-w(\cdot+y) \|_{H^1(\RN)}^2 \,ds \right)^{1\over 2},
$$
where $w$ is a fixed element in the $\omega$-limit set $\Omega(u)$. 
First we state the following proposition whose proof will be given later. 

\begin{proposition}\label{prop:4.2}
There exist $M>0$ and $T>1$ such that the following properties hold. 
For every sequence $\{(y_n,t_n)\}_{n\in\N} \subset \R^N \times \R^+$ satisfying 
$|y_n| \le 1$, $t_n \to \infty$ and 
$$
\eta(y_n,t_n) 
=\left( \int_0^T \| u(\cdot,s+t_n)-w(\cdot+y_n) \|_{H^1}^2 \,ds \right)^{1\over 2} \to 0,
$$
there exist a subsequence $\{(y_{n_j},t_{n_j})\}$ and 
$\{ z_j \}\subset \RN$ such that $| z_j| \le M\eta (y_{n_j},t_{n_j})$ and 
$$
\eta \left( z_{n_j}+y_{n_j},t_{n_j}+T \right) \le \frac{1}{2} \eta(y_{n_j},t_{n_j}).
$$
\end{proposition}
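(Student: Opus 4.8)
The plan is to argue by contradiction through a blow-up (normalization) procedure, linearizing the flow around $w$ and exploiting two structural facts: the non-degeneracy encoded in the linearized operator $\mathcal L$ of \eqref{linearized}, and the fact that $I$ is a Lyapunov functional satisfying $I(u(\cdot,t))\ge I_\infty:=I(w)$. First I would fix the constants $M$ and $T$ from the linear theory below, and suppose the statement fails for these values. This produces a sequence $(y_n,t_n)$ with $|y_n|\le 1$, $t_n\to\infty$, and $\alpha_n:=\eta(y_n,t_n)\to 0$, such that $\eta(z+y_n,t_n+T)>\tfrac12\alpha_n$ for \emph{every} $z$ with $|z|\le M\alpha_n$. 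Passing to a subsequence I may assume $y_n\to y_\ast$, and after translating $w$ I normalize $y_\ast=0$. I then set
\[
\phi_n(x,s):=\alpha_n^{-1}\big(u(x,s+t_n)-w(x+y_n)\big),\qquad\text{so that}\quad \int_0^T\|\phi_n(\cdot,s)\|_{H^1}^2\,ds=1.
\]
Lemma~\ref{lem:2.4} propagates this normalization to every interval $[0,S]$, giving $\int_0^S\|\phi_n\|_{H^1}^2\,ds\le C(S)$, while Lemma~\ref{lem:6.4} yields tail decay uniform in $n$.

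Next I would extract a limit profile. Dividing \eqref{eq:2.8.1} by $\alpha_n$ and using that $u(\cdot,s+t_n)\to w$ in $C^1_{\rm loc}(\RN)$ (Lemma~\ref{stab} and Lemma~\ref{lem:3.1}), the coefficients converge and the nonlinearity linearizes, so $\phi_n$ solves a uniformly parabolic equation whose coefficients converge to those of $\mathcal L$. Interior parabolic smoothing ($H^2$ bounds for $s$ bounded away from $0$) together with the uniform exponential tails upgrades weak $H^1$ convergence to strong convergence in $L^2((\eps,S);H^1(\RN))$; a diagonal argument then produces $\phi_n\to\phi$, where $\phi$ solves the linear parabolic problem $\phi_s+\mathcal L\phi=0$ on $\RN\times(0,\infty)$.

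The crucial step is to control the spectrum of $\mathcal L$ along $\phi$. Since $I(u(\cdot,t))$ is non-increasing (Lemma~\ref{energy-est}) and tends to $I_\infty$, one has $I(u(\cdot,t))\ge I_\infty$ for all $t$; expanding $I$ around $w$ (using $I'(w)=0$, and the fact that the uniformly bounded differences $\phi_n$ lie in the class where $I$ is differentiable) and dividing by $\alpha_n^2$ gives $\langle\mathcal L\phi_n(\cdot,s),\phi_n(\cdot,s)\rangle\ge -o(1)$, hence in the limit $\langle\mathcal L\phi(\cdot,s),\phi(\cdot,s)\rangle\ge 0$ for a.e.\ $s>0$. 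By the non-degeneracy in \eqref{linearized}, $\mathcal L$ has exactly one negative eigenvalue $\mu_1<0$, kernel equal to ${\rm span}\{\partial_{x_1}w,\dots,\partial_{x_N}w\}$, and a positive spectral gap $\mu_\ast>0$ above the kernel. Decomposing $\phi$ spectrally, its $\mu_1$-component evolves like $e^{|\mu_1|s}$, forcing $\langle\mathcal L\phi(\cdot,s),\phi(\cdot,s)\rangle\to-\infty$ unless that component vanishes; thus $\phi$ carries no negative mode. The kernel component $\phi^0=\sum_i a_i\,\partial_{x_i}w$ is constant in $s$, and $\int_0^T\|\phi^0\|_{H^1}^2\,ds\le 1$ bounds $|a|$ by a constant depending only on $T$, which fixes $M$. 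Choosing $z_n:=\alpha_n a$ (so $|z_n|\le M\alpha_n$) and invoking Lemma~\ref{lem:2.5}, the recentered sequence $\psi_n(x,\sigma):=\alpha_n^{-1}\big(u(x,\sigma+t_n)-w(x+y_n+z_n)\big)$ converges to $\tilde\phi:=\phi-\phi^0$, which lies entirely in the positive spectral subspace.

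On that subspace the linear flow contracts, $\|\tilde\phi(\cdot,s)\|_{H^1}\le e^{-\mu_\ast s}\|\tilde\phi(\cdot,0)\|_{H^1}$, so
\[
\int_0^T\|\tilde\phi(\cdot,s+T)\|_{H^1}^2\,ds\le e^{-2\mu_\ast T}\int_0^T\|\tilde\phi(\cdot,s)\|_{H^1}^2\,ds\le e^{-2\mu_\ast T},
\]
and choosing $T$ with $e^{-2\mu_\ast T}\le \tfrac18$ fixes $T$. Passing to the limit in $\alpha_n^{-2}\eta(z_n+y_n,t_n+T)^2=\int_0^T\|\psi_n(\cdot,s+T)\|_{H^1}^2\,ds$ (a genuine limit, since on $[T,2T]$ one has $s+T\ge 1$ and hence strong convergence by the smoothing above) yields a value $\le\tfrac18<\tfrac14$, contradicting $\eta(z_n+y_n,t_n+T)>\tfrac12\alpha_n$. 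I expect the main obstacle to lie in the third paragraph: rigorously linearizing the merely lower-semicontinuous functional $I$ and justifying the convergence of the quadratic form $\langle\mathcal L\phi_n,\phi_n\rangle$ under only weak $H^1$ convergence — this is precisely where strong local convergence from parabolic smoothing must be combined with the uniform exponential tails of Lemma~\ref{lem:6.4} — and in extracting $\phi$ on all of $(0,\infty)$ so that the energy sign constraint can eliminate the single negative eigenvalue.
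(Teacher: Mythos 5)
Your overall architecture matches the paper's: normalize the difference by $\eta_n$, pass to the linearized equation $\phi_t+\mathcal L_0\phi=0$, use the non-degeneracy to identify the kernel with $\mathrm{span}\{\partial_{x_i}w_0\}$, kill the negative mode with the Lyapunov functional $I$, recenter by the kernel component to define $z_n$, and contract on the positive spectral subspace. However, there is a genuine gap at the compactness step on which everything downstream rests. You claim that parabolic smoothing plus ``uniform exponential tails'' upgrades the weak convergence $\phi_n\rightharpoonup\phi$ to \emph{strong} convergence in $L^2((\varepsilon,S);H^1(\R^N))$. The exponential tails of Lemma \ref{lem:6.4} apply to $u$ and $w$, not to $\phi_n=\eta_n^{-1}(u_n-w_n)$: after dividing by $\eta_n\to 0$ there is no uniform spatial decay of $\phi_n$, so parabolic regularity only yields compactness in $H^1_{\rm loc}$, and a fixed amount of $L^2(\R^N)$-mass of the remainder $\theta_n=\phi_n-\phi$ may escape to spatial infinity. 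Indeed the paper does \emph{not} prove strong convergence: Lemma \ref{lem:5.3}(i) only gives $\sup_n\int_1^K\|\theta_n\|_{H^1}^2\,ds\le\hat C$ (with $\hat C$ independent of $K$, which is the whole point), starting from $\|\theta_n(\cdot,\tau_n)\|_{L^2}^2\le 36$, a quantity that does not tend to zero. Smallness of $\int_T^{2T}\|\theta_n\|_{H^1}^2\,ds$ is then recovered in Lemma \ref{lem:5.3}(ii) only by taking $T$ large (pigeonholing a good time $s_n$ with $\|\theta_n(\cdot,s_n)\|_{L^2}^2\le\hat C/(T-1)$ and propagating). Consequently, in your scheme the choice of $T$ cannot be dictated by the spectral gap alone, as you propose: $T$ must also be large enough to suppress the non-compact remainder, and your final limit ``$\alpha_n^{-2}\eta(z_n+y_n,t_n+T)^2\to\int_T^{2T}\|\tilde\phi\|_{H^1}^2$'' is not a genuine limit without the strong convergence you have not established.

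The same issue infects your elimination of the negative mode. From weak convergence alone, the inference ``$\langle\mathcal L_0\phi_n,\phi_n\rangle\ge -o(1)$, hence $\langle\mathcal L_0\phi,\phi\rangle\ge 0$'' is the wrong direction for weak lower semicontinuity: the coercive part of the form can drop in the limit, so a sign on the limit does not follow. The paper avoids this by never passing the quadratic form to the limit: it expands $0\le\int_1^{t_0}\bigl(I(u)(s+t_n)-I(w_0)\bigr)ds$ at the level of $\phi_n=\phi+\theta_n$, bounds the cross term and the $\theta_n$-quadratic term \emph{from above} by constants independent of $t_0$ (using precisely the uniform bound of Lemma \ref{lem:5.3}(i)), and lets the $C_0\neq 0$ contribution $-\tfrac{C_0^2}{2}e^{-2\mu_1 t_0}$ diverge to $-\infty$ as $t_0\to\infty$. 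So the missing ingredient in your proposal is exactly the quantitative substitute for strong convergence --- a horizon-independent bound on $\int_1^K\|\theta_n\|_{H^1}^2\,ds$ obtained by localized energy estimates that exploit the smallness of the \emph{coefficients} (i.e.\ of $u_n$, $w_n$ and their gradients) at spatial infinity rather than any decay of $\phi_n$ itself. You correctly flagged this step as the main obstacle, but the fix you suggest does not work; the paper's Lemma \ref{lem:5.3} is the necessary repair.
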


\noindent
By Proposition \ref{prop:4.2}, we obtain the following Corollary.

\begin{corollary}[Uniform stability]
	\label{cor:4.3}
There exist $M>0$, $T>1$, $t_0>0$ and $\eta_0>0$ such that the following property hold. 
For every $(y,t) \in \RN \times \R_+$ satisfying 
$|y| \le 1$, $t \ge t_0$ and $\eta(y,t) \le \eta_0$, 
there exists $z\in \RN$ such that $|z| \le M\eta(y,t)$ and 
$$
\eta(z+y,t+T) \le \frac{1}{2} \eta(y,t). 
$$
\end{corollary}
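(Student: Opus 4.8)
The plan is to derive Corollary~\ref{cor:4.3} from Proposition~\ref{prop:4.2} by a standard contradiction argument, using the \emph{same} constants $M>0$ and $T>1$ produced by the proposition. The whole point of the corollary is to upgrade the sequential conclusion of Proposition~\ref{prop:4.2} into a uniform threshold statement governed by two parameters $t_0$ and $\eta_0$, so no new analytic input is needed.

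First I would fix $M$ and $T$ as in Proposition~\ref{prop:4.2} and suppose, for contradiction, that no pair of thresholds $(t_0,\eta_0)$ makes the desired conclusion hold. Negating the statement and choosing $t_0=n$, $\eta_0=1/n$ for each $n\in\N$, I obtain a sequence $\{(y_n,t_n)\}_{n\in\N}\subset\RN\times\R^+$ with $|y_n|\le 1$, $t_n\ge n$ and $\eta(y_n,t_n)\le 1/n$, such that for \emph{every} $z\in\RN$ with $|z|\le M\eta(y_n,t_n)$ one has
\begin{equation*}
\eta(z+y_n,t_n+T) > \tfrac12\,\eta(y_n,t_n).
\end{equation*}

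In particular $t_n\to\infty$ and $\eta(y_n,t_n)\to 0$, so $\{(y_n,t_n)\}$ satisfies exactly the hypotheses of Proposition~\ref{prop:4.2}. Applying the proposition, I extract a subsequence $\{(y_{n_j},t_{n_j})\}$ and points $\{z_j\}\subset\RN$ with $|z_j|\le M\eta(y_{n_j},t_{n_j})$ and $\eta(z_j+y_{n_j},t_{n_j}+T)\le \tfrac12\eta(y_{n_j},t_{n_j})$. Taking $z=z_j$ in the displayed inequality above (with $n=n_j$) forces $\eta(z_j+y_{n_j},t_{n_j}+T)>\tfrac12\eta(y_{n_j},t_{n_j})$, a contradiction. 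Hence suitable $t_0,\eta_0$ exist and the corollary follows.

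Since this is purely a compactness-to-uniformity repackaging, there is no genuine analytic obstacle here: all the substance lives in Proposition~\ref{prop:4.2}. The only points requiring minor care are to recycle the proposition's constants $M$ and $T$ rather than introducing new ones, and to check that the degenerate case $\eta(y_n,t_n)=0$ causes no trouble—there the admissible set of competitors reduces to $\{0\}$ and the very same contradiction with Proposition~\ref{prop:4.2} applies.
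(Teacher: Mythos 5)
Your proof is correct and follows essentially the same route as the paper: fix $M$ and $T$ from Proposition~\ref{prop:4.2}, negate the statement to produce a sequence $\{(y_n,t_n)\}$ with $|y_n|\le 1$, $t_n\to\infty$, $\eta(y_n,t_n)\to 0$ for which every admissible $z$ fails the decay estimate, and then contradict the proposition. The only difference is that you spell out the diagonal choice $t_0=n$, $\eta_0=1/n$ and the degenerate case $\eta(y_n,t_n)=0$, which the paper leaves implicit.
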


\begin{proof}
Let $M>0$, $T>1$ be the constants provided in Proposition \ref{prop:4.2}. 
We assume by contradiction that Corollary \ref{cor:4.3} does not hold. 
Then there exists $\{ (y_n,t_n)\}_{n\in\N} \subset \RN \times \R_+$ such that 
$\eta(y_n,t_n) \to 0$, $|y_n|\leq 1$, $t_n\to\infty$ and 
$$
\eta(z+y_n,t_n+T)>\frac{1}{2}\eta(y_n,t_n)
$$
for all $z\in \RN$ with $|z| \le M \eta (y_n,t_n)$. 
This contradicts Proposition \ref{prop:4.2}. 
\end{proof}

\begin{lemma} \label{lem:4.4}
Let $M>0$, $T>1$ and $t_0>0$ be constants provided by Corollary \ref{cor:4.3}. 
Then there exists $\bar{\eta}>0$ such that the following property hold. 
For every $k \in \N$ and $t^*>t_0$ with $\eta(0,t^*)\le \bar{\eta}$, 
there exists $\{ x_i \}_{i=1}^k \subset \RN$ such that 
$| x_i| \le M\eta \big( x_1+\cdots +x_{i-1},t^*+(i-1)T \big)$ and 
$$
\eta \big( x_1+\cdots+x_k,t^*+kT \big) 
\le \frac{1}{2} \eta \big( x_1+\cdots+x_{k-1},t^*+(k-1)T \big). 
$$
Here we put $x_0=0$. 
\end{lemma}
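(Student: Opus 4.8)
The plan is to construct the points $x_1,\dots,x_k$ inductively by applying Corollary~\ref{cor:4.3} once at each stage, the entire difficulty being to keep the accumulated center $x_1+\cdots+x_{i-1}$ inside the closed unit ball so that the corollary remains applicable throughout. I would first fix the threshold $\bar\eta:=\min\{\eta_0,\tfrac{1}{2M}\}$, where $\eta_0$, $M$, $T$, $t_0$ are the constants furnished by Corollary~\ref{cor:4.3}, and abbreviate $S_i:=x_1+\cdots+x_i$ with $S_0=0$ (consistent with the convention $x_0=0$).

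The induction hypothesis at stage $i$ would be that $x_1,\dots,x_i$ have been chosen with $|x_j|\le M\,\eta(S_{j-1},t^*+(j-1)T)$ for $1\le j\le i$, and that the geometric decay
\[
\eta(S_j,t^*+jT)\le 2^{-j}\,\eta(0,t^*)\le 2^{-j}\bar\eta
\]
holds for $0\le j\le i$. The base case $i=0$ is exactly the standing hypothesis $\eta(0,t^*)\le\bar\eta$. For the inductive step, to invoke Corollary~\ref{cor:4.3} at the pair $(S_{i-1},t^*+(i-1)T)$ I must verify its three hypotheses: the time lower bound $t^*+(i-1)T\ge t_0$ is immediate from $t^*>t_0$; the smallness $\eta(S_{i-1},t^*+(i-1)T)\le 2^{-(i-1)}\bar\eta\le\bar\eta\le\eta_0$ follows from the induction hypothesis and the choice of $\bar\eta$; and the crucial constraint $|S_{i-1}|\le 1$ follows by summing the displacement bounds,
\[
|S_{i-1}|\le\sum_{j=1}^{i-1}|x_j|\le M\sum_{j=1}^{i-1}\eta(S_{j-1},t^*+(j-1)T)\le M\bar\eta\sum_{j=1}^{i-1}2^{-(j-1)}\le 2M\bar\eta\le 1,
\]
where the final inequality is precisely why I impose $\bar\eta\le\tfrac{1}{2M}$.

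Once these hypotheses hold, Corollary~\ref{cor:4.3} produces $z\in\R^N$ with $|z|\le M\,\eta(S_{i-1},t^*+(i-1)T)$ and $\eta(z+S_{i-1},t^*+iT)\le\tfrac12\eta(S_{i-1},t^*+(i-1)T)$. Setting $x_i:=z$ gives $S_i=z+S_{i-1}$, which verifies the displacement bound for $x_i$ and propagates the decay:
\[
\eta(S_i,t^*+iT)\le\tfrac12\,\eta(S_{i-1},t^*+(i-1)T)\le\tfrac12\cdot 2^{-(i-1)}\bar\eta=2^{-i}\bar\eta.
\]
This closes the induction, and the two assertions of the lemma are then read off directly: the displacement bounds $|x_i|\le M\eta(S_{i-1},t^*+(i-1)T)$ are built into the construction, while the final contraction $\eta(S_k,t^*+kT)\le\tfrac12\eta(S_{k-1},t^*+(k-1)T)$ is exactly the estimate delivered by the $k$-th application of Corollary~\ref{cor:4.3}. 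I expect the only genuinely delicate point to be the bookkeeping that keeps $|S_{i-1}|\le 1$ uniformly in $i$, and hence independently of $k$; this is secured by the summability of the geometric series arising from the contraction factor $\tfrac12<1$, which bounds the total displacement $\sum_j|x_j|$ by $2M\bar\eta$ no matter how many steps are performed.
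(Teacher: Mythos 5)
Your proof is correct and follows essentially the same route as the paper's: the same inductive application of Corollary \ref{cor:4.3}, with the geometric decay $\eta(S_j,t^*+jT)\le 2^{-j}\eta(0,t^*)$ used to sum the displacements and keep $|S_{i-1}|\le 1$ (the paper takes $\bar\eta=\tfrac12\min\{\eta_0,\tfrac{1}{2M}\}$, giving $2M\bar\eta\le\tfrac12$ instead of your $\le 1$, but either choice suffices).
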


\begin{proof}
If $M>0$, $T>1$, $t_0>0$ and $\eta_0>0$ denote the constants by Corollary \ref{cor:4.3}, we define 
$$
\bar{\eta}:= \frac{1}{2} \min \left\{ \eta_0, \frac{1}{2M} \right\}
$$
and claim that for each $k\in \N$, there exists $x_k \in \RN$ such that 
\begin{equation}\label{eq:4.1}
\left\{
\begin{array}{l} \displaystyle 
\eta \big( x_1+\cdots+x_k,t^*+kT \big) 
\le \frac{1}{2} \eta \big( x_1+\cdots+x_{k-1},t^*+(k-1)T \big), \smallskip \\ 
|x_k| \le M \eta \big( x_1+\cdots +x_{k-1},t^*+(k-1)T \big), \smallskip \\ 
|x_1+\cdots +x_k|\le 1, \smallskip \\ 
\eta \big( x_1+\cdots +x_k,t^*+kT \big) <\bar{\eta}. 
\end{array}
\right.
\end{equation}
This will be proved by an induction argument. 
Suppose that $\eta(0,t^*) \le \bar{\eta}$. 
Then by applying Corollary \ref{cor:4.3} with $y:=0$ and $t:=t^*$, 
there exists $z=:x_1 \in \RN$ such that 
$$
|x_1| \le M\eta(0,t^*) \le M\bar{\eta} \le \frac{1}{4} 
\quad \hbox{and} \quad 
\eta(x_1,t^*+T) \le \frac{1}{2}\eta(0,t^*) < \bar{\eta}. 
$$
This implies that \ef{eq:4.1} holds for $k=1$. 
Next we assume that \ef{eq:4.1} holds for $k \in \N$. 
Using Corollary \ref{cor:4.3} with $y:=x_1+\cdots+x_k$ and $t:=t^*+kT$, there exists 
$x_{k+1}\in \RN$ such that $|x_{k+1}| \le M\eta (x_1+\cdots +x_k,t^*+kT)$ and 
\begin{equation}\label{eq:4.2}
\eta \big (x_1+\cdots+x_k+x_{k+1},t^*+(k+1)T \big) 
\le \frac{1}{2} \eta \big( x_1+\cdots +x_k,t^*+kT \big). 
\end{equation}
To finish the inductive step, it suffices to show that 
$$
|x_1+\cdots+x_k+x_{k+1}|\le 1 \quad \hbox{and} \quad 
\eta \big( x_1+\cdots+x_k+x_{k+1},t^*+(k+1)T \big) <\bar{\eta}. 
$$
Now by the induction hypothesis, it follows that 
$$
\sum_{i=2}^{k+1} |x_i|
\le M \sum_{i=2}^{k+1} \eta \big( x_1+\cdots+x_{i-1},t^*+(i-1)T \big), 
$$
\begin{equation}\label{eq:4.3}
\eta \big( x_1+\cdots+x_{i-1},t^*+(i-1)T \big) 
\le \frac{1}{2}\eta \big( x_1+\cdots+x_{i-2},t^*+(i-2)T \big) 
\le \frac{1}{2^{i-1}} \eta(0,t^*),
\end{equation}
for every $2\leq i\leq k+1$. Thus one has 
$$
|x_1+\cdots+x_k+x_{k+1}| \le 2M \eta(0,t^*) \le 2M\bar{\eta} \le \frac{1}{2}. 
$$
Finally from \ef{eq:4.1} and \ef{eq:4.2}, we also have 
$$
\eta \big( x_1+\cdots +x_k+x_{k+1},t^*+(k+1)T \big) \le \frac{1}{2}\bar{\eta}.
$$
Thus by induction, Lemma \ref{lem:4.4} holds. 
\end{proof}

\begin{lemma}\label{lem:4.5}
Let $T>1$, $t_0>0$ and $\eta_0>0$ be constants in Corollary \ref{cor:4.3} and Lemma \ref{lem:4.4}. 
Then there exists $\tilde{C}>0$ such that the following properties hold. 
For every $k\in \N$ and $t^*>t_0$ with $\eta(0,t^*) \le \bar{\eta}$, it follows that 
$$
\eta(0,t^*+kT) \le \tilde{C}\eta(0,t^*). 
$$
\end{lemma}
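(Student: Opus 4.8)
The plan is to feed the sequence $\{x_i\}_{i\in\N}$ produced by Lemma~\ref{lem:4.4} into a triangle inequality that trades the shifted center for the unshifted one. Write $S_j:=x_1+\cdots+x_j$ with $S_0=0$. The two facts I would extract from Lemma~\ref{lem:4.4} are, first, the geometric decay of the residual already recorded in \ef{eq:4.3}, namely
$$
\eta\big(S_k,t^*+kT\big)\le\frac{1}{2^k}\,\eta(0,t^*),
$$
and second, a bound on the accumulated drift of the center. For the latter I would combine the displacement estimate $|x_i|\le M\,\eta\big(S_{i-1},t^*+(i-1)T\big)$ with the same geometric decay and sum the resulting series:
$$
|S_k|\le\sum_{i=1}^k|x_i|
\le M\sum_{i=1}^k\frac{1}{2^{i-1}}\,\eta(0,t^*)\le 2M\,\eta(0,t^*).
$$
The essential point is that the center moves by at most a fixed multiple of $\eta(0,t^*)$, \emph{uniformly in} $k$; this is precisely what will make the final constant $\tilde C$ independent of $k$.

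Next I would convert the shifted quantity $\eta(S_k,t^*+kT)$ back to $\eta(0,t^*+kT)$. Since the integrand $\|w(\cdot+S_k)-w(\cdot)\|_{H^1}$ does not depend on $s$, the triangle inequality for the $L^2\big((0,T);H^1\big)$ norm defining $\eta$ gives
$$
\eta(0,t^*+kT)\le\eta\big(S_k,t^*+kT\big)
+\sqrt{T}\,\|w(\cdot+S_k)-w(\cdot)\|_{H^1}.
$$
Because $|S_k|\le 2M\bar\eta\le 1$, with the last inequality following from the choice $\bar\eta\le 1/(4M)$ in Lemma~\ref{lem:4.4}, Lemma~\ref{lem:2.5}(ii) applies and bounds the last factor by $C|S_k|$. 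Inserting the two displayed estimates yields
$$
\eta(0,t^*+kT)\le\frac{1}{2^k}\,\eta(0,t^*)+2CM\sqrt{T}\,\eta(0,t^*)
\le\big(1+2CM\sqrt{T}\big)\,\eta(0,t^*),
$$
so that $\tilde C:=1+2CM\sqrt{T}$ satisfies the claim.

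I do not expect a genuine obstacle: once Lemma~\ref{lem:4.4} is in hand, the statement is essentially a geometric-series bookkeeping argument. The only points that need care are checking that the accumulated shift stays in the range $|S_k|\le 1$ where the Lipschitz estimate of Lemma~\ref{lem:2.5} is valid --- which is exactly what the definition of $\bar\eta$ guarantees --- and making sure the series is summed to a bound with no residual $k$-dependence, so that $\tilde C$ is truly uniform in $k$.
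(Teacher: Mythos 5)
Your argument is correct and is essentially the paper's own proof: both control $\eta(0,t^*+kT)$ by the triangle inequality in $L^2((0,T);H^1)$, bound the translation error via Lemma \ref{lem:2.5}(ii) together with the summed displacement bound $\sum_i |x_i|\le 2M\eta(0,t^*)$ from \eqref{eq:4.3}, and absorb the residual $\eta(S_k,t^*+kT)\le 2^{-k}\eta(0,t^*)$, yielding a constant uniform in $k$. The only cosmetic difference is the final value of $\tilde C$.
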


\begin{proof}
Let $M>0$, $T>1$, $t_0>0$, $\bar{\eta}>0$ and 
$\{ x_i\}_{i=1}^k \subset \RN$ be as in Lemma \ref{lem:4.4}. 
Then by the triangular inequality and (ii) of Lemma \ref{lem:2.5}, one has 
\begin{align*}
& \big| \eta(x_1+\cdots +x_k,t^*+kT)-\eta(0,t^*+kT) \big| \\ 
&\le \Biggl| \left( \int_0^T \| u(\cdot,s+t^*+kT)-w(\cdot+x_1+\cdots +x_k)
\|_{H^1}^2 \,ds \right)^{1\over 2} \\ 
& \qquad -\left( \int_0^T \| u(\cdot,s+t^*+kT)-w(\cdot) \|_{H^1}^2 \,ds 
\right)^{1\over 2} \Biggr| \\ 
&\le \left( \int_0^T \| w(\cdot +x_1+\cdots +x_k)-w(\cdot) \|_{H^1}^2
\,ds \right)^{1\over 2} \\ 
&\le CT^{1\over 2} \sum_{i=1}^{k} |x_i| 
\le CT^{1\over 2} M \sum_{i=1}^k \eta\big( x_1+\cdots +x_{i-1},t^*+(i-1)T\big). 
\end{align*}
Thus from \ef{eq:4.3}, we obtain 
\begin{align*}
\eta(0,t^*+kT)
&\le (1+CT^{1\over 2}M) \sum_{i=1}^k \eta\big( x_1+\cdots +x_{i-1},t^*+(i-1)T \big) \\ 
&\le 2(1+CT^{1\over 2}M)\eta(0,t^*). 
\end{align*}
Taking $\tilde{C}=2(1+CT^{1\over 2}M)$, the claim holds. 
\end{proof}

\noindent
We shall now prove Proposition \ref{prop:4.2}. 
Let $T>1$ be a constant which will be chosen later and suppose that a sequence $\{ (y_n,t_n) \}_{n\in\N} \subset \RN \times \R_+$ satisfies $|y_n| \le 1$, $t_n \to \infty$ and
$$
\eta(y_n,t_n)=
\left( \int_0^T \| u(\cdot,s+t_n)-w(\cdot+y_n) \|_{H^1(\RN)}^2 \,ds \right)^{1\over 2} \to 0. 
$$
Then passing to a subsequence, we may assume that $y_n \to y_0$ as $n \to \infty$. 
Moreover since $t_n \to \infty$ as $n\to\infty$, 
we may also assume that $u(\cdot,t_n) \to \tilde{w}$ uniformly for some $\tilde{w} \in \Omega(u)$. 
Thus for any $K>1$, we have by (ii) of Lemma \ref{lem:2.3} that 
$$
\lim_{ n \to \infty} \| u(\cdot,t+t_n)-\tilde{w}(\cdot) \|_{L^{\infty}(\RN \times [0,K])}=0. 
$$
On the other hand, it follows that $w(\cdot+y_n) \to w(\cdot+y_0)$ in $H^1(\RN)$. 
Thus from $\eta(y_n,t_n) \to 0$, 
\begin{equation}\label{eq:5.1}
\lim_{ n \to \infty} \int_0^T
\| u(\cdot,s+t_n)-w(\cdot+y_0) \|_{H^1(\RN)}^2 \,ds =0. 
\end{equation}
This implies that $\tilde{w}(\cdot)=w(\cdot +y_0)$ and hence 
\begin{equation}\label{eq:5.2}
\lim_{n \to \infty} \| u(\cdot,t+t_n)-w(\cdot+y_0) \|_{L^{\infty}(\RN \times [0,K])}=0. 
\end{equation}
Moreover by Lemma \ref{lem:3.1}, 
we know that $\{ u(\cdot, t+t_n)\}$ is relatively compact in $C^1(\RN)$. 
Thus by the uniform convergence of $u(\cdot,t+t_n) \to w(\cdot+y_0)$, one also has 
\begin{equation}
\label{gradienti}
\lim_{n \to \infty} \| \nabla u(\cdot,t+t_n)-\nabla w(\cdot+y_0) 
\|_{L^{\infty}(\RN \times [0,K])}=0. 
\end{equation}
Hereafter, we write for simplicity 
$$
\eta_n:=\eta(y_n,t_n),\quad 
u_n(x,t):=u(x,t+t_n),\quad 
w_n(x):=w(x+y_n),\quad
w_0(x):=w(x+y_0).
$$
We also note that up to translation, $w_0$ is radially symmetric with respect to $y_0$. 
Now we set
$$
\phi_n(x,t):=\frac{u_n(x,t)-w_n(x)}{\eta_n}. $$
Since 
$$
\int_{1\over 2}^1 \| \phi(\cdot,s) \|_{H^1}^2 \,ds 
\le \int_0^T \| \phi_n (\cdot,s) \|_{H^1}^2 \,ds =1, $$
we have $\| \phi_n(\cdot, \tau_n) \|_{H^1} \le 2$ 
for some $\{ \tau_n\} \subset [\frac{1}{2},1]$ by the mean value theorem. 
Thus, passing to a subsequence, we may assume that 
\begin{equation}\label{eq:5.3}
\tau_n \to \tau_0 \in \Big[\frac{1}{2},1\Big] \ \hbox{and} \ 
\phi_n(\cdot,\tau_n) \rightharpoonup \phi_0 \ \hbox{in} \ H^1(\RN),
\end{equation}
for some $\phi_0 \in H^1(\RN)$. 
Moreover by the compact embedding $H^1_{{\rm loc}}(\RN) \hookrightarrow L^2_{{\rm loc}}(\RN)$, 
we have
$$
\phi_n(\cdot,\tau_n) \to \phi_0 \quad \hbox{in} \ L_{{\rm loc}}^2(\RN) 
\quad \hbox{and} \quad \| \phi_0 \|_{L^2(\RN)} \le 2. 
$$

\begin{lemma}\label{lem:5.1}
Let $K>1$ be given. Then there exists $C>0$ such that
$$
\sup_{n\in\N}\int_0^K \| \phi_n(\cdot,s) \|_{H^1(\RN)}^2 \,ds \le C .
$$
\end{lemma}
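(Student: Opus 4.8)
The plan is to combine the normalization built into the definition of $\phi_n$ with an iteration of Lemma~\ref{lem:2.4}. Recall that by construction
\[
\int_0^T \|\phi_n(\cdot,s)\|_{H^1}^2\,ds
=\frac{1}{\eta_n^2}\int_0^T \|u(\cdot,s+t_n)-w(\cdot+y_n)\|_{H^1}^2\,ds=1,
\]
so the assertion is immediate when $K\le T$. For $K>T$ I would propagate this bound across the larger window $[0,K]$ in steps of length $T$.

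The key preliminary observation is that $w_n:=w(\cdot+y_n)$ is again a positive solution of \eqref{eq:1.3}, by translation invariance of the stationary equation. Since the constants appearing in Lemmas~\ref{lem:2.2} and \ref{lem:2.4} depend only on the $L^\infty$-bounds for $u$, $w$, $\nabla u$, $\nabla w$, which are unchanged under translating $w$, these lemmas hold with $w$ replaced by $w_n$ and with a constant independent of $n$ (recall $|y_n|\le 1$). In particular, applying Lemma~\ref{lem:2.4} with window length $T$, shift $\tau=T$, and base time $t=(j-1)T+t_n$, I obtain for the quantities
\[
a_j:=\int_{jT}^{(j+1)T}\|u(\cdot,s+t_n)-w_n\|_{H^1}^2\,ds
=\int_0^T\|u(\cdot,s+jT+t_n)-w_n\|_{H^1}^2\,ds
\]
the recursive estimate $a_j\le C\,a_{j-1}$, whence $a_j\le C^j a_0=C^j\eta_n^2$, since $a_0=\eta_n^2$.

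Summing this geometric bound, I would choose $m\in\N$ with $mT\ge K$ and conclude
\[
\int_0^K \|u(\cdot,s+t_n)-w_n\|_{H^1}^2\,ds
\le\sum_{j=0}^{m-1}a_j
\le\eta_n^2\sum_{j=0}^{m-1}C^j,
\]
so that dividing by $\eta_n^2$ gives $\int_0^K\|\phi_n(\cdot,s)\|_{H^1}^2\,ds\le\sum_{j=0}^{m-1}C^j=:C_K$, uniformly in $n$. The only genuinely delicate point is the uniformity just described: one must check that replacing $w$ by the translates $w_n$ does not let the constant in Lemma~\ref{lem:2.4} blow up with $n$. This is exactly guaranteed by the fact that all the coefficient bounds entering \eqref{eq:2.8.1} and the proof of Lemma~\ref{lem:2.2} are translation-invariant, so the iteration closes with a single constant $C=C(K)$ and the remaining steps are routine.
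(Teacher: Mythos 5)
Your argument is correct, and it reaches the conclusion by a route that differs from the paper's. The paper exploits the time slice $\tau_n\in[\tfrac12,1]$ already extracted before the lemma (via the mean value theorem applied to the normalization $\int_0^T\|\phi_n\|_{H^1}^2\,ds=1$, giving $\|\phi_n(\cdot,\tau_n)\|_{L^2}\le 2$), and then makes a \emph{single} application of Lemma~\ref{lem:2.2}~(ii) with $t_1=\tau_n+t_n$, $t_2=K+t_n$ to control $\int_{\tau_n}^K$, the piece $\int_0^{\tau_n}$ being $\le 1$ by the normalization since $\tau_n\le 1\le T$. You instead bypass the slice $\tau_n$ entirely and iterate Lemma~\ref{lem:2.4} (window $T$, shift $\tau=T$) to propagate the normalized bound $a_0=\eta_n^2$ geometrically across $\lceil K/T\rceil$ windows; this is legitimate since Lemma~\ref{lem:2.4} applies for each base time $t=(j-1)T+t_n>0$ and each window contributes a factor $C=C(T)$ independent of $n$ and $j$. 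Both arguments rest on the same underlying stability estimate (Lemma~\ref{lem:2.4} is itself proved from Lemma~\ref{lem:2.2}~(ii)), and both require the observation you make explicit --- and which the paper leaves implicit --- that the constants in these lemmas are unaffected by replacing $w$ with the translate $w_n=w(\cdot+y_n)$, because they depend only on the $L^\infty$ bounds of $u$, $w$, $\nabla u$, $\nabla w$ and on the fact that $w_n$ still solves \eqref{eq:1.3}. The paper's route is shorter and yields the cleaner constant $1+Ce^{2CK}$; yours is self-contained (it does not rely on the prior extraction of $\tau_n$) at the cost of a larger constant $\sum_{j<m}C^j$, which is harmless here.
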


\begin{proof}
By applying (ii) of Lemma \ref{lem:2.2} with $t_1=\tau_n+t_n$ and $t_2=K+t_n$, one has 
$$
\int_{\tau_n}^K \| \phi_n(\cdot,s) \|_{H^1}^2 \,ds 
\le Ce^{2C(K-\tau_n)} \| \phi_n(\cdot,\tau_n) \|_{L^2}^2. 
$$
Since $\tau_n \in [\frac{1}{2},1]$ and $\| \phi_n(\cdot,\tau_n) \|_{L^2(\RN)} \le 2$, we get 
$$
\int_{\tau_n}^K \| \phi_n(\cdot,s) \|_{H^1}^2 \,ds \le C,
$$
where $C>0$ is independent of $n \in \N$. 
Since $\int_0^T \| \phi_n(\cdot,s)\|_{H^1}^2 \,ds=1$, we also have 
$$
\int_0^{\tau_n} \| \phi_n(\cdot,s) \|_{H^1}^2 \,ds 
\le \int_0^T \| \phi_n(\cdot,s)\|_{H^1} \,ds =1. 
$$
Thus the claim holds. 
\end{proof}

\begin{lemma}[Convergence to the linearized problem]
	\label{lem:5.2}
Let $K>1$ be arbitrarily given. 
Then there exists a subsequence of $\{ \phi_n\}$, still denoted by $\{ \phi_n\}$, 
such that $\phi_n \rightharpoonup \phi$ in $L^2( [0,K),H^1(\RN))$. 
Moreover $\phi \in C( (0,\infty), L^2(\RN))$ and satisfies the following linear parabolic problem 
\begin{equation}
\label{eq:5.4}
\left\{
\begin{array}{ll}
\phi_t+{\mathcal L}_0 \phi =0 &\hbox{in} \ \RN \times (0,\infty), \\ 
\phi(x,\tau_0)=\phi_0(x) &\hbox{in} \ \RN. 
\end{array}
\right.
\end{equation}
Here ${\mathcal L}_0$ is the linearized operator around $w_0$, which is defined by 
$$
{\mathcal L}_0 \phi:= -(1+2w_0^2)\Delta \phi -4w_0 \nabla w_0 \cdot \nabla \phi 
-(4w_0\Delta w_0+2| \nabla w_0|^2) \phi +\phi -pw_0^{p-1} \phi. 
$$
\end{lemma}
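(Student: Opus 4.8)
The plan is to show that each $\phi_n$ solves a linear parabolic equation whose coefficients converge \emph{uniformly} to those of $\partial_t+\mathcal{L}_0$, to extract a weak limit from the $L^2([0,K],H^1(\RN))$ bound of Lemma~\ref{lem:5.1}, and then to pass to the limit in the weak formulation by a weak--strong argument.

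\smallskip
First I would record the equation for $\phi_n$. Since $u_n(\cdot,t)=u(\cdot,t+t_n)$ is a classical solution of \ef{eq:1.1} and $w_n=w(\cdot+y_n)$ is a smooth stationary solution of \ef{eq:1.3}, the difference $u_n-w_n=\eta_n\phi_n$ obeys the linear identity \ef{eq:2.8.1} with $(u,w)$ replaced by $(u_n,w_n)$; dividing by $\eta_n$, which is legitimate by linearity, shows that $\phi_n$ solves, classically on $\RN\times(0,K)$,
\[
\partial_t\phi_n-(1+2u_n^2)\Delta\phi_n-2w_n\nabla(u_n+w_n)\cdot\nabla\phi_n-c_n\phi_n=0,
\]
where $c_n:=2(u_n+w_n)\Delta w_n+2|\nabla u_n|^2-1+p\big(\kappa_n u_n+(1-\kappa_n)w_n\big)^{p-1}$ for some measurable $\kappa_n=\kappa_n(x,t)\in(0,1)$. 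By Lemma~\ref{lem:5.1}, $\{\phi_n\}$ is bounded in $L^2([0,K],H^1(\RN))$, and by testing the equation against $H^1(\RN)$ functions and using the uniform $L^\infty$-bounds on $u_n,\nabla u_n,w_n,\nabla w_n,\Delta w_n$ one obtains $\|\partial_t\phi_n(\cdot,t)\|_{H^{-1}(\RN)}\le C\|\phi_n(\cdot,t)\|_{H^1(\RN)}$, hence $\{\partial_t\phi_n\}$ is bounded in $L^2([0,K],H^{-1}(\RN))$. Passing to a subsequence, $\phi_n\rightharpoonup\phi$ in $L^2([0,K],H^1(\RN))$ with $\partial_t\phi\in L^2([0,K],H^{-1}(\RN))$; by the Lions--Magenes theorem $\phi$ has a representative in $C([0,K],L^2(\RN))$.

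\smallskip
The heart of the proof is the passage to the limit. By \ef{eq:5.2} and \ef{gradienti} one has $u_n\to w_0$ and $\nabla u_n\to\nabla w_0$ uniformly on $\RN\times[0,K]$, while $y_n\to y_0$ yields $w_n\to w_0$, $\nabla w_n\to\nabla w_0$ and $\Delta w_n\to\Delta w_0$ uniformly; since $\kappa_n u_n+(1-\kappa_n)w_n\to w_0$ uniformly whatever the values $\kappa_n\in(0,1)$, every coefficient above converges uniformly to the corresponding coefficient of $\partial_t+\mathcal{L}_0$. For $\psi\in C_0^\infty(\RN\times(0,K))$ I would write the weak form, integrating the second-order term by parts so that exactly one derivative falls on $\psi$ and one remains on $\phi_n$:
\[
\int_0^K\!\!\int_{\RN}\!\Big(-\phi_n\psi_t+(1+2u_n^2)\nabla\phi_n\cdot\nabla\psi+b_n\cdot\nabla\phi_n\,\psi-c_n\phi_n\psi\Big)\,dx\,dt=0,
\]
with $b_n:=4u_n\nabla u_n-2w_n\nabla(u_n+w_n)$. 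Because $\psi$ has fixed compact support, each coefficient converges strongly in $L^2$ on $\supp\psi$, while $\phi_n,\nabla\phi_n\rightharpoonup\phi,\nabla\phi$ weakly in $L^2$; every term is therefore a genuine weak--strong pairing and passes to its limit, yielding precisely the weak formulation of $\phi_t+\mathcal{L}_0\phi=0$ on $\RN\times(0,K)$. This weak--strong step is the only delicate point, and the main obstacle: the leading coefficient $1+2u_n^2$ is solution-dependent, but the uniform $C^1$-convergence $u_n\to w_0$ turns every product into a weak-times-strong limit, so that no strong convergence of $\nabla\phi_n$ is required.

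\smallskip
It remains to identify the initial value and to let $K\to\infty$. Repeating the limiting argument on $(\tau_n,K)$ with a test function $\psi$ vanishing at $t=K$ but not at $t=\tau_n$, the integration by parts in time produces the extra boundary term $-\int_{\RN}\phi_n(\cdot,\tau_n)\psi(\cdot,\tau_n)\,dx$; by \ef{eq:5.3}, namely $\phi_n(\cdot,\tau_n)\rightharpoonup\phi_0$ in $H^1(\RN)$, together with $\tau_n\to\tau_0$ and $\psi(\cdot,\tau_n)\to\psi(\cdot,\tau_0)$ in $L^2(\RN)$, this converges to $-\int_{\RN}\phi_0\,\psi(\cdot,\tau_0)\,dx$. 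Comparing with the weak form already obtained and using $\phi\in C([0,K],L^2(\RN))$ forces $\phi(\cdot,\tau_0)=\phi_0$, so $\phi$ solves \ef{eq:5.4} on $\RN\times(0,K)$. Finally, since $K>1$ was arbitrary, a diagonal extraction over an increasing sequence $K\to\infty$ produces a single subsequence and a limit $\phi\in C((0,\infty),L^2(\RN))$ solving \ef{eq:5.4} on $\RN\times(0,\infty)$, as claimed.
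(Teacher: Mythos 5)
Your proposal is correct and follows essentially the same route as the paper: write the linear equation satisfied by $\phi_n$ (the paper's \eqref{eq:2.8.1}/\eqref{eq:5.5} divided by $\eta_n$), test against compactly supported smooth functions, and pass to the limit by pairing the weak $L^2([0,K],H^1)$ convergence of $\phi_n$ from Lemma \ref{lem:5.1} with the uniform convergence of the coefficients coming from \eqref{eq:5.2}, \eqref{gradienti} and $w_n\to w_0$ in $C^2$, recovering the initial datum from the boundary term at $t=\tau_n$ via \eqref{eq:5.3}. Your extra $L^2([0,K],H^{-1})$ bound on $\partial_t\phi_n$ with Lions--Magenes, and the diagonal extraction in $K$, merely make explicit what the paper delegates to ``linear parabolic theory.''
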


\begin{proof}
The weak convergence of $\phi_n$ follows by Lemma \ref{lem:5.1}. 
We show that the weak limit $\phi$ satisfies \ef{eq:5.4}. 
Now from \ef{eq:1.1} and \ef{eq:1.3} and by the definition of $\phi_n$, one has 
\begin{equation}\label{eq:5.5}
(\phi_n)_t-\Delta \phi_n+\phi_n
-\eta_n^{-1}\big( u_n\Delta u_n^2-w_n\Delta w_n^2 \big)
-\eta_n^{-1}(u_n^p-w_n^p)=0. 
\end{equation}
Fix $\varphi \in C_0^{\infty}\big( \RN\times [0,K] \big)$. 
Multiplying \ef{eq:5.5} by $\varphi$ and integrating over $[\tau_n,K] \times \RN$, we get 
\begin{align*}
&\int_{\tau_n}^K \int_{\RN} \Big(
-\phi_n \varphi_t+\nabla \phi_n \cdot \nabla \varphi
+\phi_n \varphi
-\frac{1}{\eta_n}\big( u_n\Delta u_n^2-w_n\Delta w_n^2\big) \varphi 
-\frac{1}{\eta_n}(u_n^p-w_n^p) \varphi \Big) \,dx \,ds \\ 
&=\int_{\RN} \phi_n(x,\tau_n) \varphi(x,\tau_n) \,dx. 
\end{align*}
Now from the integration by parts, it follows that 
\begin{align}\label{eq:5.6}
-\frac{1}{\eta_n} \int_{\RN} \big( u_n\Delta u_n^2-w_n\Delta w_n^2 \big) \varphi \,dx 
&= 2 \int_{\RN} ( u_n \nabla (u_n+w_n)\cdot \nabla \phi_n 
+| \nabla w_n|^2 \phi_n)\varphi \,dx \nonumber \\
&+2\int_{\RN} \big( (u_n+w_n)\phi_n \nabla u_n +w_n^2 \nabla \phi_n\big) \cdot \nabla \varphi \,dx.
\end{align}
Moreover by the mean value theorem, we also have
\begin{equation}\label{eq:5.7}
-\frac{1}{\eta_n}\int_{\RN} (u_n^p-w_n^p) \varphi \,dx 
=-p \int_{\RN} (\kappa_n u_n+(1-\kappa_n)w_n)^{p-1} \phi_n \varphi \,dx 
\end{equation}
for some $\kappa_n \in (0,1)$. 
Thus, we obtain 
\begin{equation}\label{eq:5.8}
 \int_{\tau_0}^K \int_{\RN} (-\phi_n\varphi_t 
+{\mathcal L}_0 \phi_n \cdot \varphi ) \,dx \,ds 
-\int_{\RN} \phi_0(x) \varphi(x,\tau_0) \,dx=\sum_{i=1}^4 \I_i^n,
\end{equation}
where we have set 
\begin{align*}
 \I^n_1& := \int_{\tau_0}^{\tau_n} \int_{\RN}
\Big\{ -\phi_n \varphi_t+\nabla \phi_n \cdot \nabla \varphi
+\phi_n \varphi
+2 \big( (u_n+w_n)\phi_n \nabla u_n +w_n^2 \nabla \phi_n\big) \cdot \nabla \varphi
\nonumber \\
&\quad +2( u_n \nabla (u_n+w_n)\cdot \nabla \phi_n
+| \nabla w_n|^2 \phi_n)\varphi 
-p(\kappa_n u_n+(1-\kappa_n)w_n)^{p-1}\phi_n \varphi 
\Big\} \,dx \,ds, \\
 \I^n_2&:=\int_{\RN} \big( \phi_n(x,\tau_n)\varphi(x,\tau_n)-\phi_0(x) \varphi(x,\tau_0) \big) \,dx, \\
\I^n_3 & :=\int_{\tau_0}^K \int_{\RN}
p \big((\kappa_n u_n+(1-\kappa_n)w_n)^{p-1}- w_0^{p-1}\Big) 
\phi_n \varphi \,dx \,ds, \\
\I^n_4& :=-\int_{\tau_0}^K \int_{\RN} \Big\{
2 \phi_n \big( (u_n+w_n) \nabla u_n- 2 w_0 \nabla w_0 \big) \cdot \nabla \varphi +2(w_n^2-w_0^2) \nabla \phi_n \cdot \nabla \varphi 
 \nonumber \\
&\quad
+2\varphi \big( u_n \nabla (u_n+w_n) -2w_0 \nabla w_0 \big) \cdot \nabla \phi_n
+2(|\nabla w_n|^2 -|\nabla w_0|^2) \phi_n \varphi  \Big\} \,dx \,ds.
\end{align*}
Now by virtue of Lemma \ref{lem:5.1}, as $n\to\infty$ one has 
\begin{align*}
|\I_1^n| &\le 
C \int_{\tau_0}^{\tau_n} \int_{\RN}
\Big( |\phi_n | \big( |\varphi_t|+|\varphi|+|\nabla \varphi| \big)
+|\nabla \phi_n| \big( |\nabla \varphi|+|\varphi| \big) \Big) \,dx \,ds \\
&\le C \Big( \int_{\tau_0}^{\tau_n} \int_{\RN} 
\big(|\varphi_t|^2+|\varphi|^2+|\nabla \varphi|^2 \big) dx \,ds \Big)^{1\over 2}
\Big( \int_{\tau_0}^{\tau_n} \int_{\RN} \big(|\nabla \phi_n|^2 +|\phi_n|^2\big) \,dx \,ds\Big)^{1\over 2} \\
&\le C |\tau_n -\tau_0|^{1\over 2} 
\Big( \int_0^K \| \phi_n(\cdot,s) \|_{H^1}^2 \,ds \Big)^{1\over 2}
\le C| \tau_n -\tau_0|^{1\over 2} \to 0.
\end{align*}
Moreover since $\phi_n(\cdot,\tau_n) \to \phi_0$ in $L^2_{{\rm loc}}(\RN)$, 
we also have $|\I_2^n| \to 0$ as $n \to \infty$. 
Next from \ef{eq:5.2} and by the uniform convergence of $w_n$ to $ w_0$, it follows that 
$$
\lim_{n \to \infty} \Big( \sup_{\RN \times [0,K]}
\big| w_0^{p-1}-
(\kappa_n u_n +(1-\kappa_n)w_n)^{p-1} \big| \Big) =0. 
$$
Thus by Lemma \ref{lem:5.1}, one has $|\I_3^n| \to 0$. 
Similarly by the uniform convergences of $\nabla u_n \to \nabla w_0$, 
$\nabla w_n \to \nabla w_0$ and from \ef{eq:5.1}, we also have $|\I_4^n| \to 0$. 
Letting $n \to \infty$ in \ef{eq:5.8}, we obtain 
$$
\int_{\tau_0}^K \int_{\RN} 
(-\phi \varphi_t+{\mathcal L}_0 \phi \cdot \varphi) \,dx \,ds 
-\int_{\RN} \phi_0(x) \varphi(x,\tau_0) \,dx =0. 
$$
This implies that $\phi$ is a weak solution of \ef{eq:5.4}. 
Then by the linear parabolic theory, it follows that $\phi$ is a classical solution and $\phi \in C \big( (0,\infty),L^2(\RN) \big)$. 
\end{proof}

\begin{lemma}
	\label{lem:5.3} 
Let $\theta_n(x,t):= \phi_n(x,t)-\phi(x,t).$ Then the following facts hold.
\begin{itemize}
\item[\rm(i)] Let $K>1$. 
Then there exist $n_1=n_1(K)\in \N$ and a positive constant $\hat{C}$ independent of $n \in \N$ and $K$ such that 
$$
\sup_{n\geq n_1}\int_1^K \| \theta_n(\cdot,s) \|_{H^1(\RN)}^2 \,ds\le \hat{C}. 
$$
\item[\rm(ii)] 
For any $\varepsilon>0$, there exist $T_{\varepsilon}>1$ and 
$n_2=n_2(T_{\varepsilon})\in \N$ such that
$$
\sup_{n\geq n_2}\int_{T_{\varepsilon}}^{2T_{\varepsilon}}
\| \theta_n(\cdot,s) \|_{H^1(\RN)}^2 \,ds < \varepsilon. 
$$
\end{itemize}
\end{lemma}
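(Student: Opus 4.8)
The plan is to run a single energy analysis for the difference $\theta_n=\phi_n-\phi$, handling (i) and (ii) with the same machinery. First I would record the equation it solves: subtracting the linearized problem \ef{eq:5.4} from \ef{eq:5.5} and using the integrated identities \ef{eq:5.6}--\ef{eq:5.7}, one writes
$$(\theta_n)_t+{\mathcal L}_0\theta_n=R_n\qquad\text{in }\RN\times(\tau_n,K],$$
where ${\mathcal L}_0$ is the limiting operator around $w_0$ and the remainder $R_n$ gathers precisely the differences between the coefficients built on $(u_n,w_n)$ and those built on $w_0$. These are exactly the quantities already shown to be negligible in the treatment of the terms $\I^n_3,\I^n_4$ of Lemma \ref{lem:5.2}: by the uniform convergences \ef{eq:5.2} and \ef{gradienti} together with Lemma \ref{lem:5.1}, the coefficients of $R_n$ tend to zero uniformly on $\RN\times[0,K]$ while $\phi_n$ stays bounded in $L^2((0,K),\HN(\RN))$. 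The basic tool is then the $L^2$ energy identity obtained by testing against $\theta_n$, in which the quadratic form of ${\mathcal L}_0$ is self-adjoint and satisfies a G\aa rding inequality $\langle{\mathcal L}_0 v,v\rangle\ge c_0\|v\|_{\HN}^2-C_0\|v\|_{\LN}^2$.

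For (i) I would integrate the resulting differential inequality $\tfrac{d}{dt}\|\theta_n\|_{\LN}^2+2c_0\|\theta_n\|_{\HN}^2\le 2C_0\|\theta_n\|_{\LN}^2+2(R_n,\theta_n)$ over $[1,K]$. A bound on a \emph{fixed} window is immediate — for $\phi_n$ from Lemma \ref{lem:5.1}, for $\phi$ from weak lower semicontinuity — so the whole content is that $\hat C$ be independent of $K$. This forces one to avoid the lossy Gronwall factor $e^{CK}$ of Lemma \ref{lem:2.2} and exhibit genuine dissipation, i.e.\ to control $\int_1^K\|\theta_n\|_{\LN}^2$ uniformly in $K$. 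To do so I would split $\theta_n$ along ${\rm Ker}\,{\mathcal L}_0={\rm span}\{\partial w_0/\partial x_1,\dots,\partial w_0/\partial x_N\}$ and its orthogonal complement: since $\phi_n(\cdot,\tau_n)\to\phi_0$ in $L^2_{{\rm loc}}$, upgraded to $\LN(\RN)$ by the uniform exponential spatial decay of Lemma \ref{lem:6.4}, the kernel component of $\theta_n$ is initially small and evolves only under the negligible forcing $R_n$, while the strictly stable part is exponentially damped by the coercivity of ${\mathcal L}_0$ (non-degeneracy of $w_0$) and is thus time-integrable.

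I expect the genuine obstacle to be the one remaining finite-dimensional direction — the negative eigenvalue of ${\mathcal L}_0$ — along which the linearized flow is unstable, and where the gain from the $C_0\|\theta_n\|_{\LN}^2$ term is not available over long windows. The plan is to neutralize it using the a priori fact that the \emph{true} trajectory converges, namely \ef{eq:5.2}, together with the Lyapunov structure (Lemma \ref{energy-est} and $I(u(\cdot,t))\ge0$ from Lemma \ref{lem:6.6}), which should preclude a growing unstable mode for the rescaled difference and yield the $K$-independent constant. This uniformity as $K\to\infty$ is the crux of the whole statement.

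For (ii) I would instead argue by compactness on a fixed horizon, where the unstable direction is harmless. The equation for $\theta_n$ shows that $(\phi_n)_t$ is bounded in $L^2((0,2T),H^{-1}(\RN))$ while $\phi_n$ is bounded in $L^2((0,2T),\HN(\RN))$, so by the Aubin--Lions lemma $\theta_n\to0$ strongly in $L^2((0,2T),L^2_{{\rm loc}}(\RN))$; the uniform exponential decay of Lemma \ref{lem:2.3} kills the spatial tails and upgrades this to strong convergence in $L^2((0,2T),\LN(\RN))$. Feeding $\int_T^{2T}\|\theta_n\|_{\LN}^2\to0$ and a suitably chosen slice $\|\theta_n(\cdot,T')\|_{\LN}\to0$ into the energy inequality above then forces $\int_T^{2T}\|\theta_n\|_{\HN}^2\to0$, the offending $C_0\|\theta_n\|_{\LN}^2$ now integrating to zero. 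Hence any fixed $T_\varepsilon>1$ works with $n_2$ large, and Lemma \ref{lem:2.4} can be invoked if one wishes to place the window far out while keeping the threshold uniform in $n$.
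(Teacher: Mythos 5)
Your setup --- writing $(\theta_n)_t+{\mathcal L}_0\theta_n=R_n$ with vanishing remainders and testing against $\theta_n$ --- matches the paper, but the way you close the estimate in (i) goes astray. You correctly identify that the content is the $K$-independence of $\hat C$, but you then try to obtain it from genuine $K$-uniform dissipation, which drives you into the spectral decomposition of ${\mathcal L}_0$ and leaves the unstable direction as an acknowledged obstacle that you only ``plan'' to neutralize via the Lyapunov functional. That step is not carried out and is not straightforward: the energy identity of Lemma \ref{energy-est} constrains $u$, not the rescaled difference $\phi_n-\phi$, and $\theta_n$ solves a forced equation, so no clean ODE for its unstable coefficient is available (in the paper the Lyapunov argument is used only later, in Lemma \ref{lem:5.5}, to kill the unstable mode of $\phi$ itself). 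More importantly, the difficulty is illusory, because the statement allows the threshold $n_1$ to depend on $K$. The paper exploits exactly this: in the global energy inequality \ef{eq:5.13} the only terms growing with $K$ are a term supported in a fixed ball $B(0,R_\delta)$ --- since the lower-order negative part of $\langle {\mathcal L}_0\theta_n,\theta_n\rangle$ and the quadratic coefficients built from $u_n,w_n,w_0$ all decay at spatial infinity, cf.\ \ef{eq:1.4} --- plus a constant $\hat h_n\to 0$. The localized term is then absorbed for $n\ge n_1(K)$ thanks to the preliminary local uniform convergence \ef{eq:5.9}, which is itself proved by a cutoff energy estimate and Gronwall, cheerfully accepting the factor $e^{CK}$ you set out to avoid. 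You never isolate this spatial localization of the bad term, which is the actual engine of the proof; no spectral splitting is needed.

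In (ii) there is a concrete false step: the upgrade from $\theta_n\to 0$ in $L^2((0,2T),L^2_{\rm loc}(\RN))$ to convergence in $L^2((0,2T),L^2(\RN))$ by ``uniform exponential decay''. The functions $u_n$, $w_n$, $w_0$ do decay exponentially uniformly in $n$, but $\phi_n=(u_n-w_n)/\eta_n$ with $\eta_n\to 0$, so no uniform-in-$n$ spatial decay of $\phi_n$ (hence of $\theta_n$) is available; one only knows $\int_0^T\|\phi_n(\cdot,s)\|_{H^1}^2\,ds=1$, which permits mass escaping to spatial infinity. Consequently your conclusion that any fixed $T_\varepsilon>1$ works is unsupported. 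In the paper $T_\varepsilon$ must be taken large: the $K$-independent bound of (i) and the mean value theorem produce a slice $s_n\in[1,T]$ with $\|\theta_n(\cdot,s_n)\|_{L^2}^2\le \hat C/(T-1)$, and after integrating \ef{eq:5.13} from $s_n$ to $2T$ it is the term $\hat C/(T-1)$ that dictates the choice of $T_\varepsilon$, the remaining localized terms being handled again by \ef{eq:5.9} for $n\ge n_2(T_\varepsilon)$.
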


\begin{proof}
(i) Let $R>0$ be given. First, we claim that 
\begin{equation}\label{eq:5.9}
\lim_{n \to \infty} \sup_{t\in [\tau_n,K]} 
\| \theta_n(\cdot,t)\|_{L^2 \big( B(0,R) \big) }=0. 
\end{equation}
Now from \ef{eq:5.4} and \ef{eq:5.5}, one has 
\begin{align}\label{eq:5.10}
&(\theta_n)_t-\Delta \theta_n +\theta_n 
-\frac{1}{\eta_n}\big( u_n\Delta u_n^2-w_n\Delta w_n^2 \big) 
-\frac{1}{\eta_n}(u_n^p-w_n^p) \nonumber \\ 
&\quad +2w_0^2 \Delta \phi +4 w_0 \nabla w_0 \cdot \nabla \phi +4w_0 \Delta w_0 \phi 
+2|\nabla w_0|^2 \phi +pw_0^{p-1} \phi =0. 
\end{align}
Let $\xi \in C_0^{\infty}(\RN)$ be a cut-off function satisfying $\xi \equiv 1$ on $B(0,R)$. 
We multiply \ef{eq:5.10} by $\xi^2 \theta_n$ and integrate it over $\RN$. 
Then, from \ef{eq:5.6}, \ef{eq:5.7} and the integration by parts, we get 
\begin{equation}\label{eq:5.11}
\int_{\RN} \Big(\frac{1}{2} \frac{\partial}{\partial t} (\xi^2 \theta_n^2)
+(1+2w_0^2)|\nabla \theta_n|^2 \xi^2 +\theta_n^2 \xi^2
+2 \xi \theta_n \nabla \theta_n \cdot \nabla \xi \Big)\,dx
=-\sum_{i=1}^5  \I_i^n,
\end{equation}
where we have set 
\begin{align*}
 \I^n_1&:= \int_{\RN} \Big( 2u_n \nabla (u_n+w_n) \cdot \nabla \phi_n
-4w_0 \nabla w_0 \cdot \nabla \phi \Big) \xi^2 \theta_n \,dx, \\
\I^n_2& :=\int_{\RN}\big( 2|\nabla w_n|^2 \phi_n -2|\nabla w_0|^2 \phi \Big) 
\xi^2 \theta_n  \,dx,  \\
\I^n_3& := \int_{\RN} \Big( 2(u_n+w_n)\phi_n \nabla u_n
-4w_0\phi \nabla w_0 \Big) \cdot \nabla (\xi^2 \theta_n) \,dx,  \\
\I^n_4& :=\int_{\RN} \Big( 2(w_n^2-w_0^2) \xi^2 \nabla \phi_n \cdot \nabla \theta_n +2(w_n^2\nabla \phi_n-w_0^2 \nabla \phi) 
\cdot \nabla (\xi^2)\theta_n \Big)\,dx, \\
\I^n_5& := \int_{\RN} p
\Big( w_0^{p-1} \phi -(\kappa_n u_n +(1-\kappa_n)w_n)^{p-1} \phi_n 
\Big) \xi^2 \, \theta_n \,dx.
\end{align*}
Now by the Schwarz and the Young inequalities, it follows that 
$$
2 \int_{\RN} \xi \theta_n \nabla \theta_n \cdot \nabla \xi \,dx
\ge -\frac{1}{2} \int_{\RN} |\nabla \theta_n|^2 |\xi|^2 \,dx
-2 \int_{\RN} |\theta_n|^2 |\nabla \xi|^2 \,dx.$$
Next by the Schwarz inequality, one has 
\begin{align*}
|\I_1^n| &\le 2 \| u_n \nabla (u_n+w_n) \|_{L^{\infty}(\RN \times [0,K])} \| \xi \nabla \theta_n \|_{L^2} \| \xi \theta_n \|_{L^2} \\ 
& \quad +2 \| u_n \nabla (u_n+w_n) -2w_0 \nabla w_0 \|_{L^{\infty}(\RN \times [0,K])} \| \xi \nabla \phi \|_{L^2} \| \xi \theta_n \|_{L^2}. 
\end{align*}
Similarly we have 
\begin{align*}
|\I_2^n| &\le 2 \left\| |\nabla w_n|^2-|\nabla w_0|^2 \right\|_{L^{\infty}}
\| \xi \theta_n \|_{L^2} \| \phi_n \|_{L^2}
+2 \| \nabla w_0 \|_{L^{\infty}}^2 \| \xi \theta_n \|_{L^2}^2, \\
|\I_3^n| &\le 2 \| (u_n+w_n)\nabla u_n -2w_0 \nabla w_0 \|_{L^{\infty}}
\| \phi_n \|_{L^2} \| \xi \nabla \theta_n \|_{L^2} \\
&\quad +4\| (u_n+w_n)\nabla u_n-2w_0 \nabla w_0 \|_{L^{\infty}} 
\| \xi \theta_n \|_{L^2} \| \phi_n \nabla \xi\|_{L^2}, \\
&\quad +4 \| w_0 \nabla w_0\|_{L^{\infty}} \| \xi \theta_n \|_{L^2} \| \xi \nabla \theta_n \|_{L^2}
+ 8 \| w_0 \nabla w_0\|_{L^{\infty}} \| \xi \theta_n \|_{L^2} \| \theta_n \nabla \xi \|_{L^2} \\
|\I_4^n| &\le 2 \| w_n^2 -w_0^2 \|_{L^{\infty}} \| \xi \nabla \phi \|_{L^2}
\| \xi \nabla \theta_n \|_{L^2} +2 \| w_n^2 -w_0^2 \|_{L^{\infty}}
\| \xi \nabla \theta_n \|_{L^2}^2 \\
&\quad +4 \| w_n^2-w_0^2 \|_{L^{\infty}} \| \xi \nabla \phi \|_{L^2} 
\| \theta_n \nabla \xi \|_{L^2}
+4 \| w_n \|_{L^{\infty}}^2 \| \xi \nabla \theta_n \|_{L^2}
\| \theta_n \nabla \xi \|_{L^2},\\
|\I_5^n| &\le p \| w_0^{p-1}-(\kappa_n u_n +(1-\kappa_n)w_n)^{p-1}\|_{L^{\infty}}
\| \phi_n \|_{L^2} \| \xi \theta_n \|_{L^2}
+p\| w_0 \|_{L^{\infty}}^{p-1} \| \xi \theta_n \|_{L^2}^2.
\end{align*}
Next applying (i) of Lemma \ref{lem:2.2} with $t_1=\tau_n+t_n$ and $t_2=t+t_n$, we get 
$$
\| \phi_n(\cdot,t) \|_{L^2}
\le e^{C(t-\tau_n)} \| \phi_n(\cdot,\tau_n) \|_{L^2}
\le Ce^{CK} \quad \hbox{for} \ t\in [\tau_n,K] 
$$
and hence $\| \theta_n(\cdot,t) \|_{L^2} \le C$. 
Thus from \ef{eq:5.11}, the uniform decays of $u_n$, $w_n$, $\nabla u_n$, $\nabla w_n$ and by the Young inequality, we obtain 
\begin{align*}
&\frac{\partial }{\partial t} \| \xi \theta_n \|_{L^2}^2
+\int_{\RN} (|\nabla \theta_n|^2+|\theta_n|^2) |\xi|^2 \,dx \\
&\le C \| \xi \theta_n \|_{L^2}^2
+C \| \theta_n \nabla \xi \|_{L^2}^2 \\
&\quad +2\| w_n^2-w_0^2 \|_{L^{\infty}} \| \xi \nabla \theta_n\|_{L^2}^2
+C \| \theta_n \nabla \xi \|_{L^2} +h_n, 
\end{align*}
where $C$ and $h_n$ are positive constants with $h_n \to 0$. 
Now let $\varepsilon>0$. 
We choose $\xi$ so that 
$$
C \| \theta_n \nabla \xi \|_{L^2}^2 +C \| \theta_n \nabla \xi \|_{L^2} \le C \sup_{\RN} |\nabla \xi|(1+|\nabla \xi|) < \frac{\varepsilon}{2}. 
$$
Next we take large $n_0 \in \N$ so that
$$
2\| w_n^2-w_0^2 \|_{L^{\infty}} \le \frac{1}{2} \ \ 
\hbox{and} \ \ h_n < \frac{\varepsilon}{2} \ \hbox{for} \ n \ge n_0. 
$$
Then we obtain 
\begin{equation}\label{eq:5.12}
\frac{\partial }{\partial t} \| \xi \theta_n(\cdot,t) \|_{L^2}^2
\le C \| \xi \theta_n(\cdot,t) \|_{L^2}^2 +\varepsilon. 
\end{equation}
Let $\zeta_n(t):= \| \xi \theta_n(\cdot,t)\|_{L^2}^2$. 
From \ef{eq:5.12}, it follows that $\zeta_n' \le C \zeta_n+\varepsilon$. 
Thus by the Gronwall inequality, one has 
$$
\zeta_n(t) \le e^{C(t-\tau_n)}\zeta_n(\tau_n)+e^{Ct}\int_{\tau_n}^t \varepsilon e^{-Cs} \,ds 
\le e^{CK}\left( \zeta_n(\tau_n)+ \frac{\varepsilon}{C}\right) \ \hbox{for} \ t\in [\tau_n,K]. 
$$
Since $\phi_n(\cdot,\tau_n) \to \phi(\cdot,\tau_0)$ in $L^2_{{\rm loc}}(\RN)$, 
we have $\zeta_n(\tau_n)=\| \xi \theta_n(\cdot,\tau_n) \|_{L^2}^2 \to 0$. Thus, 
$$ 
\limsup_{n \to \infty} \Big( \sup_{t \in [\tau_n,K]}
\| \theta_n(\cdot,t) \|_{L^2 \big( B(0,R \big)} \Big) \le \frac{\varepsilon}{C}e^{CK}. 
$$
Since $\varepsilon$ is arbitrarily, \ef{eq:5.9} holds. 
Next we show that $\int_1^K \| \theta_n(\cdot,s) \|_{H^1(\RN)}^2 \,ds \le \hat{C}$. 
To this aim, we multiply \ef{eq:5.10} by $\theta_n$ and integrate it over $\RN$. 
Then arguing as above, one has 
\begin{equation*}
\int_{\RN} \Big(\frac{1}{2} \frac{\partial}{\partial t} (\theta_n)^2
+(1+2w_0^2)|\nabla \theta_n|^2 +|\theta_n|^2\Big) \,dx= - \sum_{i=1}^4 \J^n_i, 
\end{equation*}
where we have set 
\begin{align*}
\J^n_1 &:= \int_{\RN} \Big( 2u_n \nabla (u_n+w_n)\cdot \nabla \phi_n
-4 w_0 \nabla w_0 \cdot \nabla \phi \Big) \theta_n \,dx, \\
\J^n_2& :=\int_{\RN} \Big( 2(u_n+w_n)\phi_n \nabla u_n-4w_0\phi \nabla w_0 \Big)
\cdot \nabla \theta_n \,dx, \\
\J^n_3& :=\int_{\RN} (2|\nabla w_n|^2 \phi_n-2|\nabla w_0|^2 \phi )\theta_n 
+2(w_n^2-w_0^2)\nabla \phi_n \cdot \nabla \theta_n \,dx, \\
\J^n_4&:=\int_{\RN} p \Big( w_0^{p-1}\phi -(\kappa_n u_n +(1-\kappa_n)w_n )^{p-1}
\phi_n \Big) \theta_n \,dx. 
\end{align*}
Now, we fix $\delta>0$ arbitrarily. 
By the Young inequality, it follows that 
\begin{align*}
|\J_1^n| &\le 2 \int_{\RN} |u_n\nabla(u_n+w_n)| |\theta_n| |\nabla \theta_n|
+|u_n\nabla (u_n+w_n)-2w_0 \nabla w_0| |\nabla \phi| |\theta_n| \,dx \\
&\le \frac{1}{8} \| \nabla \theta_n \|_{L^2}^2
+C \int_{\RN} |u_n\nabla (u_n+w_n)|^2 |\theta_n|^2 \,dx \\
&\quad +\delta \| \theta_n \|_{L^2}^2 
+C_{\delta} 
\| u_n\nabla (u_n+w_n)-2w_0 \nabla w_0 \|_{L^{\infty}}^2
\| \nabla \phi\|_{L^2}^2 \\
&\le \frac{1}{8} \| \nabla \theta_n \|_{L^2}^2 
+C \sup_{|x| \ge R} |u_n(x,t)| \int_{B^c(0,R)} |\theta_n|^2 \,dx
+C \int_{B(0,R)} |\theta_n|^2 \,dx \\
&\quad +\delta \| \theta_n \|_{L^2}^2 +C_{\delta} 
\| u_n\nabla (u_n+w_n)-2w_0 \nabla w_0 \|_{L^{\infty}}^2
\| \nabla \phi\|_{L^2}^2.
\end{align*}
From \ef{eq:1.4}, there exists $R_{\delta}>0$ such that 
$$
C \sup_{|x| \ge R_{\delta}} |u_n(x,t)| <\delta,
$$
for all $n \in \N$ and $t\in [1,K]$. 
Thus we obtain 
$$
|\J_1^n| \le \frac{1}{8} \| \nabla \theta_n \|_{L^2}^2 
+2\delta \| \theta_n \|_{L^2}^2
+C_{\delta} \int_{B(0,R_{\delta})} | \theta_n|^2 \,dx 
+C_{\delta} \hat{h}_n,
$$
where $C_{\delta}$ is a positive constant independent of $n\in \N$ and $K$, 
and $\hat{h}_n$ is a positive constant satisfying $\hat{h}_n \to 0$ as $n \to \infty$. 
Estimating $\J_2^n,\J_3^n,\J_4^n$ similarly, we have 
\begin{align*}
\frac{\partial }{\partial t} \| \theta_n\|_{L^2}^2 + \| \theta_n \|_{H^1}^2 
&\le \left( \frac{1}{2} + \| w_n^2-w_0^2 \|_{L^{\infty}} \right)
\| \nabla \theta_n \|_{L^2}^2 \\
&\quad +5 \delta \| \theta_n \|_{L^2}^2 
+C_{\delta} \int_{B(0,R_{\delta})} | \theta_n|^2 \,dx +C_{\delta} \hat{h}_n.
\end{align*}
Now we choose $\delta=1/10$. 
Taking $n\in \N$ larger if necessary, 
we have $\| w_n^2-w_0^2 \|_{L^{\infty}} \le 1/4$. 
Then we obtain 
\begin{equation}\label{eq:5.13}
\frac{\partial}{\partial t} \| \theta_n(\cdot,t) \|_{L^2}^2 
+\| \theta_n(\cdot,t) \|_{H^1}^2 
\le C_{\delta} \int_{B(0,R_{\delta})} |\theta_n(x,t)|^2 \,dx+C_{\delta} \hat{h}_n. 
\end{equation}
Integrating \ef{eq:5.13} over $[\tau_n,K]$, we get 
\begin{align*}
\int_{\tau_n}^K \| \theta_n(\cdot,s) \|_{H^1}^2 \,ds 
\le \| \theta_n(\cdot,\tau_n)\|_{L^2}^2 
+C_{\delta} \int_{\tau_n}^K \int_{B(0,R_{\delta})} | \theta_n(x,s)|^2 \,dx \,ds 
+C_{\delta} \hat{h}_n(K-\tau_n). 
\end{align*}
From \ef{eq:5.9} and $\hat{h}_n \to 0$, there exists $n_1=n_1(K) \in \N$ such that
$$
C_{\delta} \int_{\tau_n}^K \int_{B(0,R_{\delta})} | \theta_n(x,s)|^2 \,dx \,ds \le 2
\quad \hbox{and} \quad C_{\delta} \hat{h}_n(K-\tau_n) \le 2 \quad 
\hbox{for} \ n \ge n_1. 
$$
Moreover from $\| \phi_n(\cdot,\tau_n)\|_{L^2} \le 2$, 
$\| \phi(\cdot,\tau_0) \|_{L^2}\le 2$ and by the continuity of $\phi$, we also have 
\begin{align*}
\sup_{n \ge n_1} \| \theta_n(\cdot,\tau_n)\|_{L^2}^2 
\le \big( \| \phi_n(\cdot,\tau_n)\|_{L^2}
+\| \phi(\cdot,\tau_n)-\phi(\cdot,\tau_0)\|_{L^2}
+\| \phi( \cdot,\tau_0)\|_{L^2} \big)^2 \le 36. 
\end{align*}
Since $\tau_n \le 1$, we obtain
$$
\sup_{n \ge n_1} \int_1^K \| \theta_n(\cdot,s) \|_{H^1}^2 \,ds \le 40 
\quad \hbox{for} \ n \ge n_1. 
$$
This completes the proof of (i). 

(ii) We fix $\varepsilon>0$ arbitrarily and let $T>1$. 
First we observe from (i) that 
$$
\int_1^T \| \theta_n(\cdot,s)\|_{H^1}^2 \,ds \le \hat{C}
\quad \hbox{for} \ n \ge n_1(T). 
$$
Thus by the mean value theorem, there exists $s_n \in [1,T]$ such that 
$\| \theta_n(\cdot,s_n)\|_{L^2}^2 \le \frac{\hat{C}}{T-1}$. 
Next we integrate \ef{eq:5.13} over $[s_n,2T]$. 
Then from $\tau_n \le 1 \le s_n \le T$, it follows that 
\begin{align*}
\int_T^{2T} \| \theta_n(\cdot,s)\|_{H^1}^2 \,ds 
&\le \int_{s_n}^{2T} \| \theta_n(\cdot,s)\|_{H^1}^2 \,ds \\
&\le \| \theta_n(\cdot,s_n)\|_{L^2}^2
+C \int_{s_n}^{2T} \int_{B(0,R_{\delta})} |\theta_n|^2 \,dx\,ds \\
&\le \frac{\hat{C}}{T-1} 
+C \int_{\tau_n}^{2T} \int_{B(0,R_{\delta})} |\theta_n|^2 \,dx \,ds
+C \hat{h}_n (2T-s_n). 
\end{align*}
Now we choose $T_{\varepsilon}>1$ so that 
$\frac{\hat{C}}{T_{\varepsilon}-1} < \frac{\varepsilon}{3}$. 
Next from formula \ef{eq:5.9} and $\hat{h}_n \to 0$, 
we can take large $n_2=n_2(T_{\varepsilon})\in \N$ so that
$$
C \int_{\tau_n}^{2T_{\varepsilon}} \int_{B(0,R_{\delta})} 
|\theta_n|^2 \,dx \,ds < \frac{\varepsilon}{3} \quad 
\hbox{and} \quad C\hat{h}_n(2T_{\varepsilon}-s_n) < \frac{\varepsilon}{3}
\quad \hbox{for} \ n \ge n_2. 
$$
Then it follows that
$$
\sup_{n \ge n_2} \int_{T_{\varepsilon}}^{2T_{\varepsilon}} \| \theta_n(\cdot,s) \|_{H^1}^2 \,ds 
< \varepsilon 
$$
and hence the proof is complete. 
\end{proof}

\noindent
Now we consider the following eigenvalue problem
$$
{\mathcal L}_0 \psi =\mu \psi, \quad 
\psi \in L^2(\RN) \quad \hbox{and} \quad 
\psi(x) \to 0 \ \hbox{as} \ |x| \to \infty. 
$$
Then the first eigenvalue $\mu_1$ is negative. 
We denote by $\psi_1$ the associated eigenfunction with $\| \psi_1 \|_{L^2(\RN)}=1$. 
Moreover we know that the second eigenvalue $\mu_2$ is zero and the corresponding eigenspace is spanned by $\{ \frac{\partial w_0}{\partial x_i}\}_{i=1}^N$ (See \cite{ASW2}, Remark 4.10). 
Let $\tau_0 \in [\frac{1}{2},1]$ be as in \ef{eq:5.3} and decompose 
\begin{equation}\label{eq:5.14}
\phi_0(x)=\phi(x,\tau_0)= 
C_0 e^{-\mu_1 \tau_0} \psi_1(x) +\sum_{i=1}^N C_i \frac{\partial w_0}{\partial x_i}(x)
+\tilde{\theta}(x,\tau_0), 
\end{equation}
where $C_0, C_i \in \R$ and $\psi_1$, $\frac{\partial w_0}{\partial x_i}$, $\tilde{\theta}$ are mutually orthogonal in $L^2(\RN)$. 
Finally we set
\begin{equation}\label{eq:5.15}
\tilde{\theta}(x,t):=
\phi(x,t)- C_0 e^{-\mu_1 t} \psi_1(x)-
\sum_{i=1}^N C_i \frac{\partial w_0}{\partial x_i}(x). 
\end{equation}
Then by direct calculations, one can see that $\tilde{\theta}$ satisfies 
\begin{equation}\label{eq:5.16}
\tilde{\theta}_t+{\mathcal L}_0 \tilde{\theta}=0 \quad \hbox{in} \ 
\RN \times (0,\infty). 
\end{equation}
Moreover by the definition of $\tilde{\theta}$, we also have 
\begin{equation}\label{eq:5.17}
\int_{\RN} \tilde{\theta}(\cdot,\tau_0) \psi_1 \,dx
= \int_{\RN} \tilde{\theta}(\cdot,\tau_0) 
\frac{\partial w_0}{\partial x_i} \,dx
=0 \quad \hbox{for} \ i=1,\cdots,N. 
\end{equation}

\noindent
In the next result, we shall use the crucial information of non-degeneracy of stationary solutions. 

\begin{lemma}[Non-degeneracy and stability]
	\label{lem:5.4}
There exist $\alpha>0$ and $\tilde{T}>0$ such that
$e^{-\alpha \tilde{T}}<1$ and 
$$
\int_T^{2T} \| \tilde{\theta}(\cdot,s)\|_{H^1(\RN)}^2\,ds \le e^{-\alpha T}
\quad \hbox{for all} \ \ T \ge \tilde{T}. 
$$
\end{lemma}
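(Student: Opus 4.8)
The plan is to exploit that, by its very construction in \eqref{eq:5.14}--\eqref{eq:5.15}, the corrector $\tilde\theta$ carries no component along the unstable mode $\psi_1$ (eigenvalue $\mu_1<0$) nor along the kernel $\mathrm{span}\{\partial w_0/\partial x_i\}$ (eigenvalue $\mu_2=0$), so that it lives in the spectral subspace of $\mathcal{L}_0$ associated with strictly positive eigenvalues, on which the flow \eqref{eq:5.16} contracts exponentially.

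First I would show that the orthogonality relations \eqref{eq:5.17}, valid at $t=\tau_0$, propagate for all $t>0$. Writing $\mathcal{L}_0$ in divergence form $\mathcal{L}_0\phi=-\mathrm{div}\big((1+2w_0^2)\nabla\phi\big)+V\phi$ with $V:=1-4w_0\Delta w_0-2|\nabla w_0|^2-pw_0^{p-1}$, the operator is self-adjoint on $L^2(\RN)$ with form domain $H^1(\RN)$. Using \eqref{eq:5.16} and this self-adjointness, together with the fact that $\psi_1$ and $\partial w_0/\partial x_i$ are eigenfunctions with eigenvalues $\mu_1$ and $0$, the scalar products obey $\frac{d}{dt}(\tilde\theta,\psi_1)_{L^2}=-\mu_1(\tilde\theta,\psi_1)_{L^2}$ and $\frac{d}{dt}(\tilde\theta,\partial w_0/\partial x_i)_{L^2}=0$. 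Since both quantities vanish at $t=\tau_0$, they vanish identically, so $\tilde\theta(\cdot,t)$ stays orthogonal to $\psi_1$ and to the kernel for all $t$.

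Next I would quantify the coercivity. Because the exponential decay of $w_0$ forces $\mathcal{L}_0\to-\Delta+1$ at infinity, the essential spectrum of $\mathcal{L}_0$ is $[1,\infty)$, so the non-degenerate eigenvalue $0$ is isolated below it; consequently $\mu_3:=\inf\big(\mathrm{spec}(\mathcal{L}_0)\cap(0,\infty)\big)$ is strictly positive. The associated quadratic form $Q(\phi):=(\mathcal{L}_0\phi,\phi)_{L^2}=\int_{\RN}(1+2w_0^2)|\nabla\phi|^2\,dx+\int_{\RN}V\phi^2\,dx$ then satisfies $Q(\tilde\theta)\ge\mu_3\|\tilde\theta\|_{L^2}^2$ by the previous step, while the crude bound $Q(\phi)\ge\|\nabla\phi\|_{L^2}^2-C_0\|\phi\|_{L^2}^2$ holds with $C_0:=\|V^-\|_{L^\infty}<\infty$ coming from the boundedness of $w_0$ and its derivatives. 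Testing \eqref{eq:5.16} against $\tilde\theta$ gives the energy identity $\tfrac12\tfrac{d}{dt}\|\tilde\theta\|_{L^2}^2=-Q(\tilde\theta)\le-\mu_3\|\tilde\theta\|_{L^2}^2$, whence the exponential decay $\|\tilde\theta(\cdot,t)\|_{L^2}^2\le Ce^{-2\mu_3 t}$, the constant $C$ being controlled through \eqref{eq:5.14} and $\|\phi_0\|_{L^2}\le 2$.

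Finally, integrating the energy identity on $[T,2T]$ yields $\int_T^{2T}Q(\tilde\theta)\,ds\le\tfrac12\|\tilde\theta(\cdot,T)\|_{L^2}^2\le Ce^{-2\mu_3 T}$; combining this with the crude lower bound for $Q$ and the $L^2$-decay $\int_T^{2T}\|\tilde\theta\|_{L^2}^2\,ds\le Ce^{-2\mu_3 T}$, I obtain $\int_T^{2T}\|\tilde\theta(\cdot,s)\|_{H^1}^2\,ds\le \tilde{C}e^{-2\mu_3 T}$. Choosing any $\alpha\in(0,2\mu_3)$ and then $\tilde T$ large enough that $\tilde C e^{-2\mu_3 T}\le e^{-\alpha T}$ for all $T\ge\tilde T$ (the requirement $e^{-\alpha\tilde T}<1$ being automatic) completes the proof. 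The main obstacle is the third step: establishing the strict positivity of $\mu_3$, which is precisely where the non-degeneracy of $w_0$ enters, for otherwise the zero eigenvalue could fail to be isolated and the contraction estimate would collapse; the regularity needed to differentiate the scalar products and to run the energy identity is supplied by the classical solvability of \eqref{eq:5.4} established in Lemma~\ref{lem:5.2}.
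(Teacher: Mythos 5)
Your argument is correct and follows the same architecture as the paper's proof: propagate the orthogonality relations \eqref{eq:5.17} in time via the self-adjointness of ${\mathcal L}_0$, obtain a coercivity estimate $({\mathcal L}_0\tilde\theta,\tilde\theta)_{L^2}\ge \mu\|\tilde\theta\|_{L^2}^2$ on the orthogonal complement of $\psi_1$ and the kernel, deduce exponential $L^2$ decay from the energy identity, and upgrade to $H^1$ on $[T,2T]$ through a G\aa rding-type lower bound for the quadratic form (this is exactly the paper's inequality \eqref{eq:5.21}). The one point where you genuinely diverge is the proof that the gap constant is positive. The paper defines $\bar\mu$ as the infimum of the Rayleigh quotient over the constrained class and proves $\bar\mu>0$ by contradiction: a minimizing sequence is shown, via the exponential decay of $w_0$ and the bound \eqref{eq:5.22}, to have a nontrivial weak limit $\bar\psi$, which by the Lagrange multiplier rule would satisfy ${\mathcal L}_0\bar\psi=0$, contradicting ${\rm Ker}({\mathcal L}_0)={\rm span}\{\partial w_0/\partial x_i\}$. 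You instead invoke the spectral picture: since $w_0$ decays, ${\mathcal L}_0$ agrees with $-\Delta+1$ at infinity, so $\sigma_{\rm ess}({\mathcal L}_0)=[1,\infty)$ (or at least $\inf\sigma_{\rm ess}\ge 1$ by a Persson-type localization), whence $0$ is an isolated eigenvalue and $\mu_3=\inf(\sigma({\mathcal L}_0)\cap(0,\infty))>0$. Both routes use the non-degeneracy in an essential way — in yours it is what guarantees that orthogonality to $\psi_1$ and to the translation modes places $\tilde\theta$ in the spectral subspace of $(0,\infty)$. Your version is shorter and conceptually cleaner, at the cost of quoting the localization of the essential spectrum for a variable-coefficient divergence-form operator, which the paper's hands-on compactness argument avoids; otherwise the two proofs are interchangeable.
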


\begin{proof}
First we claim that 
\begin{equation}\label{eq:5.18}
\int_{\RN} \tilde{\theta}(\cdot,t)\psi_1 \,dx
=\int_{\RN} \tilde{\theta}(\cdot,t) \frac{\partial w_0}{\partial x_i}\,dx
=0 \quad \hbox{for} \ i=1,\cdots,N \ \hbox{and} \ t \ge \tau_0. 
\end{equation}
To this aim, we put $\eta(t):= \int_{\RN} \tilde{\theta}(\cdot,t) \psi_1 \,dx$. 
Then from \ef{eq:5.16}, one has 
\begin{equation*}
\eta'(t)= ( \tilde{\theta}_t, \psi_1)_{L^2}
= -( {\mathcal L}_0 \tilde{\theta},\psi_1)_{L^2}
= -(\tilde{\theta}, {\mathcal L}_0 \psi_1 )_{L^2} 
= -\mu_1 (\tilde{\theta},\psi_1)_{L^2}
=-\mu_1 \eta. 
\end{equation*}
Thus from \ef{eq:5.17}, it follows that $\eta(t)=\eta(\tau_0)
e^{-\mu_1(t-\tau_0)}=0$ for all $t \ge \tau_0$.
We can prove the second equality in a similar way.
Next we define
\begin{align*}
\bar{\mu} &:=
\inf \Big\{
({\mathcal L}_0\psi,\psi)_{L^2} : \ 
\psi\in \HN(\RN), \ \| \psi\|_{\LN}=1, \\
&\hspace{5em} 
(\psi,\psi_1)_{L^2}
=\big(\psi, \frac{\partial w_0}{\partial x_i}\big)_{L^2}=0
\quad \hbox{for} \ i=1,\cdots, N \Big\}
\end{align*}
and suppose that $\bar{\mu}>0$ for the present. 
Then from \ef{eq:5.16} and \ef{eq:5.18}, we get 
\begin{equation}\label{eq:5.19}
\frac{d}{dt} \int_{\RN} \tilde{\theta}^2(\cdot,t) \,dx
=-2({\mathcal L}_0\tilde{\theta},\tilde{\theta})_{L^2}
\le - 2\bar{\mu} \int_{\RN} \tilde{\theta}^2(\cdot,t) \,dx
\end{equation}
and hence 
\begin{equation}\label{eq:5.20}
\int_{\RN} \tilde{\theta}^2(\cdot,t) \,dx
\le e^{-2\bar{\mu}(t-\tau_0)}
\int_{\RN} \tilde{\theta}^2(\cdot,\tau_0) \,dx
\quad \hbox{for} \ t \ge \tau_0. 
\end{equation}
Now let $\tilde{T}>1$ be a constant which will be chosen later 
and take $T \ge \tilde{T}$ arbitrarily. 
Integrating \ef{eq:5.19} over $[T,2T]$, one has 
$$
\int_{\RN} \tilde{\theta}^2(\cdot,2T) \,dx
+2\int_T^{2T} ({\mathcal L}_0\tilde{\theta},\tilde{\theta})_{L^2} \,ds
=\int_{\RN} \tilde{\theta}^2(\cdot,T) \,dx.
$$
Moreover by the Young inequality, we also have 
\begin{align}\label{eq:5.21}
({\mathcal L}_0 \tilde{\theta},\tilde{\theta})_{L^2} 
&= \int_{\RN} \Bigl( (1+2w_0^2)|\nabla \tilde{\theta}|^2
+8 w_0 \tilde{\theta} \nabla w_0 \cdot \nabla \tilde{\theta}
+2 \tilde{\theta}^2 |\nabla w_0|^2 
+\tilde{\theta}^2 -pw_0^{p-1} \tilde{\theta}^2 \Bigr) \,dx \nonumber \\
&\ge \int_{\RN} \Big(
|\nabla \tilde{\theta}|^2+\tilde{\theta}^2
-8w_0 |\tilde{\theta}| |\nabla w_0| |\nabla \tilde{\theta}|
-pw_0^{p-1}\tilde{\theta}^2 \Big) \,dx \nonumber \\
&\ge \frac{1}{2} \int_{\RN} | \nabla \tilde{\theta}|^2+\tilde{\theta}^2 \,dx
-\int_{\RN} \big( pw_0^{p-1}+32 w_0^2 |\nabla w_0|^2 \big) \tilde{\theta}^2 \,dx\\
&\ge \frac{1}{2} \| \tilde{\theta}(\cdot,t)\|_{\HN}^2
-C \| \tilde{\theta}(\cdot,t)\|_{\LN}^2. \nonumber 
\end{align}
Thus from \ef{eq:5.20}, we obtain 
\begin{align*}
\int_T^{2T} \| \tilde{\theta}(\cdot,s) \|_{\HN}^2 \,ds
&\le C \int_T^{2T} \| \tilde{\theta}(\cdot,s) \|_{\LN}^2 \,ds
+\| \tilde{\theta}(\cdot,T) \|_{\LN}^2 \\
&\le Ce^{2\bar{\mu} \tau_0} \| \tilde{\theta}(\cdot,\tau_0)\|_{L^2}^2
\int_T^{2T} e^{-2\bar{\mu}s} \,ds
+e^{2\bar{\mu}\tau_0} \| \tilde{\theta}(\cdot,\tau_0)\|_{L^2}^2
e^{-2\bar{\mu}T} \\
&\le \frac{C}{\bar{\mu}} e^{2\bar{\mu}\tau_0} \| \tilde{\theta}(\cdot,\tau_0)\|_{L^2}^2
e^{-2\bar{\mu}T}
+e^{2\bar{\mu}\tau_0} \| \tilde{\theta}(\cdot,\tau_0)\|_{L^2}^2
e^{-2\bar{\mu}T}\\
&\le \bar{C} e^{-2\bar{\mu}T} \quad 
\hbox{for all} \ \ T \ge \tilde{T},
\end{align*}
where $\bar{C}>0$ is independent of $T$ and $\tilde{T}$. 
Putting $\alpha :=\bar{\mu}>0$ and taking $\tilde{T}>1$ larger so that 
$\bar{C}e^{-\alpha \tilde{T}} \le 1$, the claim holds. 
We now show that $\bar{\mu}>0$. 
By the definition of $\bar{\mu}$ and $\mu_2=0$, it follows that $\bar{\mu} \ge 0$. 
Suppose by contradiction that $\bar{\mu}=0$. 
Then there exists $\{ \psi_n \} \subset \HN(\RN)$ such that $\| \psi_n\|_{L^2}=1$, 
$(\psi_n,\psi_1)_{L^2}=(\psi_n,\frac{\partial w_0}{\partial x_i})_{L^2}=0$ 
for $i=1,\cdots,N$ and $({\mathcal L}_0 \psi_n,\psi_n)_{L^2} \to 0$ as $n \to \infty$. 
Since $({\mathcal L}_0\psi_n,\psi_n)_{L^2} \to 0$ and $\| \psi_n \|_{L^2}=1$, 
one can show that $\| \psi_n \|_{\HN}$ is bounded. 
Thus passing to a subsequence, we may assume that 
$\psi_n \rightharpoonup \bar{\psi}$ in $\HN(\RN)$ and 
$\psi_n \to \bar{\psi}$ in $L_{{\rm loc}}^2(\RN)$ for some $\bar{\psi}\in \HN(\RN)$. 
Moreover arguing as \ef{eq:5.21}, we have 
\begin{equation}\label{eq:5.22}
\frac{1}{2} \| \psi_n \|_{\HN}^2
\le ({\mathcal L}_0\psi_n,\psi_n)_{L^2}
+\int_{\RN} \big( pw_0^{p-1} +32 w_0^2 |\nabla w_0|^2 \big) \psi_n^2 \,dx. 
\end{equation}
Since $w_0$ decays exponentially at infinity and $\| \psi_n\|_{L^2}=1$, 
there exists $R>0$ such that 
$$
\int_{B^c(0,R)} \big( pw_0^{p-1} +32 w_0^2 |\nabla w_0|^2 \big) 
\psi_n^2 \,dx \le \frac{1}{4}. 
$$
Thus from \ef{eq:5.22}, we get
$$
\frac{1}{4} \le C \int_{B(0,R)} \psi_n^2 \,dx +o_n(1).
$$
This implies that $\bar{\psi} \not\equiv 0$. 
Moreover by the Fatou lemma, the weak convergence of $\psi_n \rightharpoonup \bar{\psi}$, 
the strong convergence in $L^2_{{\rm loc}}(\RN)$ 
and by the exponential decay of $w_0$, one can show that
$$
({\mathcal L}_0 \bar{\psi},\bar{\psi})_{L^2} \le 0, \quad 
(\bar{\psi},\psi_1)_{L^2}=\Big( \bar{\psi}, \frac{\partial w_0}{\partial x_i} \Big)_{L^2}=0 
\quad \hbox{for} \ i=1,\cdots,N. 
$$
Since $\bar{\mu}=0$, 
it follows by the definition of $\bar{\mu}$ that $({\mathcal L}_0\bar{\psi},\bar{\psi})_{L^2}=0$. 
By the Lagrange multiplier rule, 
using $\bar{\psi},\psi_1$ and $\partial w_0/\partial x_i$ as test functions, 
one can prove that ${\mathcal L}_0\bar{\psi}=0$, which contradicts 
${\rm Ker} ({\mathcal L}_0)= {\rm span} \{ \frac{\partial w_0}{\partial x_i} \}$. 
Thus $\bar{\mu}>0$ and the proof is complete. 
\end{proof}

\begin{lemma}\label{lem:5.5}
It follows that $C_0=0$ and hence it holds 
$$
\phi(x,t)=\sum_{i=1}^N C_i \frac{\partial w_0}{\partial x_i}(x) +\tilde{\theta}(x,t). 
$$
\end{lemma}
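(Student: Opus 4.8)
The plan is to argue by contradiction, assuming $C_0\neq 0$, and to derive a violation of the finiteness of the total energy dissipated along the trajectory: the unstable mode $\psi_1$ (associated with the negative eigenvalue $\mu_1$) would force the time derivative of the limiting linearized flow to be non-integrable in time, whereas the quadratic structure of the energy keeps it integrable. First I would record the global dissipation estimate. By Lemma~\ref{lem:6.6} one has $I(u(\cdot,t))\ge 0$ for all $t>0$, and by Lemma~\ref{energy-est}, $\frac{d}{dt}I(u(\cdot,t))=-\|u_t(\cdot,t)\|_{L^2}^2$. Hence $I(u(\cdot,t))$ is nonincreasing and converges to a finite limit $I_\infty\ge 0$, and $\int_0^\infty\|u_t(\cdot,s)\|_{L^2}^2\,ds<\infty$. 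Since $u(\cdot,t_n)\to w_0$ together with the uniform exponential decay of Lemma~\ref{lem:6.4}, the energy is continuous along this sequence, so $I_\infty=I(w_0)$; by translation invariance $I(w_n)=I(w_0)=I_\infty$ for every $n$.

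Next I would transfer this bound to $\phi_n$. Since $u(\cdot,t+t_n)=w_n+\eta_n\phi_n(\cdot,t)$ with $w_n$ stationary, we have $u_t(\cdot,t+t_n)=\eta_n(\phi_n)_t(\cdot,t)$, whence
$$\int_{\tau_n}^\infty\|(\phi_n)_t(\cdot,s)\|_{L^2}^2\,ds=\frac{1}{\eta_n^2}\big(I(u(\cdot,\tau_n+t_n))-I_\infty\big).$$
The crucial step is to bound the right-hand side uniformly in $n$. As $I'(w_n)=0$ and $I(w_n)=I_\infty$, the second-order Taylor expansion of $I$ at $w_n$ in the direction $\phi_n(\cdot,\tau_n)$ gives
$$I(u(\cdot,\tau_n+t_n))-I_\infty=\frac{\eta_n^2}{2}\big({\mathcal L}_0\phi_n(\cdot,\tau_n),\phi_n(\cdot,\tau_n)\big)_{L^2}+o(\eta_n^2).$$
Since the quadratic form of ${\mathcal L}_0$ is bounded on $H^1(\RN)$ and $\|\phi_n(\cdot,\tau_n)\|_{H^1}\le 2$ by the choice of $\tau_n$, the right-hand side is $\le C\eta_n^2$, so $\int_{\tau_n}^\infty\|(\phi_n)_t\|_{L^2}^2\,ds\le C$ with $C$ independent of $n$.

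Finally I would pass to the limit and reach the contradiction. For each fixed $K>1$, using Lemma~\ref{lem:5.2} and the uniform bound above, $(\phi_n)_t\rightharpoonup\phi_t$ in $L^2((\tau_0,K)\times\RN)$; weak lower semicontinuity then yields $\int_{\tau_0}^K\|\phi_t\|_{L^2}^2\,ds\le C$, and letting $K\to\infty$, $\int_{\tau_0}^\infty\|\phi_t\|_{L^2}^2\,ds\le C<\infty$. On the other hand, differentiating \ef{eq:5.15} and using that $\sum_{i=1}^N C_i\frac{\partial w_0}{\partial x_i}$ is time-independent, $\phi_t=-\mu_1 C_0 e^{-\mu_1 t}\psi_1+\tilde{\theta}_t$; differentiating \ef{eq:5.18} gives $(\tilde{\theta}_t(\cdot,t),\psi_1)_{L^2}\equiv 0$, so projecting onto $\psi_1$ and using $\|\psi_1\|_{L^2}=1$ yields $\|\phi_t(\cdot,t)\|_{L^2}^2\ge\mu_1^2 C_0^2 e^{-2\mu_1 t}$. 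Since $\mu_1<0$, this forces $\int_{\tau_0}^\infty\|\phi_t\|_{L^2}^2\,ds=+\infty$ whenever $C_0\neq 0$, a contradiction. Hence $C_0=0$ and \ef{eq:5.15} reduces to the stated identity.

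The main obstacle I expect is the justification of the second-order expansion of $I$ in the second step: because $I$ is only differentiable along bounded directions and $|u|^{p-1}u$ is non-analytic, one must show that the quadratic remainder is $o(\eta_n^2)$ uniformly over directions bounded in $H^1(\RN)$, which rests on the Sobolev embeddings available for $3\le p<\frac{3N+2}{N-2}$ and on the uniform bounds of Lemma~\ref{lem:2.1}. A lesser technical point is the weak $L^2$-convergence of the time derivatives $(\phi_n)_t$, which follows from the uniform estimate above together with the identification of the limiting equation in Lemma~\ref{lem:5.2}.
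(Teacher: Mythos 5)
Your argument is correct, but it is not the route the paper takes, so let me compare the two. The paper integrates the energy inequality $I\big(u(\cdot,s+t_n)\big)\ge I(w_0)$ over a long window $[1,t_0]$, Taylor-expands $I$ around $w_n$, decomposes the quadratic form $\langle I''(w_0)\phi_n,\phi_n\rangle$ through $\phi_n=\phi+\theta_n$ and the spectral splitting \eqref{eq:5.15}, controls the $\theta_n$-, $\tilde\theta$- and cross-terms via Lemma \ref{lem:5.3}(i) and Lemma \ref{lem:5.4} (estimates \eqref{eq:5.27}--\eqref{eq:5.32}), and lets $t_0\to\infty$ so that the term $-\tfrac{C_0^2}{2}e^{-2\mu_1 t_0}$ dominates and violates the sign of \eqref{eq:5.24}. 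You instead work with the \emph{dissipation}: a single Taylor expansion at the time slice $\tau_n$ (with the mean-value form of the remainder and a Hessian uniformly bounded on $H^1$ thanks to the $C^1$-bounds on $u_n,w_n$ and $\|\phi_n(\cdot,\tau_n)\|_{H^1}\le 2$) gives $I\big(u(\cdot,\tau_n+t_n)\big)-I(w_0)\le C\eta_n^2$, hence $\int_{\tau_n}^\infty\|(\phi_n)_t\|_{L^2}^2\,ds\le C$ uniformly in $n$; weak lower semicontinuity transfers this to $\int_1^\infty\|\phi_t\|_{L^2}^2\,ds<\infty$, while the projection $(\phi_t(\cdot,t),\psi_1)_{L^2}=-\mu_1C_0e^{-\mu_1 t}$ makes that integral infinite unless $C_0=0$. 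This is genuinely lighter: it bypasses Lemma \ref{lem:5.3}(i), the conclusion of Lemma \ref{lem:5.4}, and all of \eqref{eq:5.28}--\eqref{eq:5.32}, using only \eqref{eq:5.16}--\eqref{eq:5.17} and the self-adjointness of ${\mathcal L}_0$ for the orthogonality $(\tilde\theta_t,\psi_1)_{L^2}=0$. The price is two points you should make explicit: (a) the weak convergence $(\phi_n)_t\rightharpoonup\phi_t$ in $L^2((1,K)\times\RN)$ is not part of Lemma \ref{lem:5.2} as stated, but follows from your uniform bound together with uniqueness of distributional limits; (b) the identification $I_\infty=I(w_0)$ requires the gradient convergence \eqref{gradienti} and the uniform exponential decay, exactly as the paper needs for \eqref{eq:5.24}. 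Finally, the ``main obstacle'' you anticipate (a sharp $o(\eta_n^2)$ remainder against $I''(w_0)$) is not actually needed: the one-sided bound $\le C\eta_n^2$ from the boundedness of $I''$ along the segment, which is precisely what the paper establishes in \eqref{eq:5.27}--\eqref{eq:5.28}, already suffices.
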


\begin{proof}
First we observe by Lemma \ref{energy-est} that
\begin{equation}
\label{eq:5.23}
I(u)(t+t_n)-I(u)(t+\tau) \le 0, \quad \hbox{for any} \ 0<\tau \le t_n. 
\end{equation}
Let $t_0>1$ be given. From \ef{eq:5.2}, \eqref{gradienti} 
and uniform exponential decay of $\sup_{t>0} | D^ku(\cdot,t)|$ for $|k|\le 1$, one has 
$$
I(u)(t+t_n) \to I(w_0) \quad \hbox{as} \ n \to \infty \ \ \hbox{on} \ [1,t_0]. 
$$
Thus integrating \ef{eq:5.23} over $[1,t_0]$ and passing a limit $n \to \infty$, we get 
\begin{equation}\label{eq:5.24}
\int_1^{t_0} \big( I(u)(s+\tau)-I(w_0) \big) \,ds \ge 0 
\quad  \hbox{for any} \ t_0>1. 
\end{equation}
Next since $u_n=w_n+\eta_n \phi_n$, 
$I'(w_n)=0$ and $I(w_0)=I(w_n)$, by Taylor expansion, we have
\begin{align}\label{eq:5.25}
& \int_1^{t_0} \big( I(u)(s+t_n)-I(w_0) \big) \,ds \nonumber \\
&= \int_1^{t_0} \big( I(w_n+\eta_n \phi_n)-I(w_n) \big) \,ds \nonumber \\
&= \frac{\eta_n^2}{2} \int_1^{t_0}
\big\langle I''(w_n+\kappa_n \eta_n\phi_n)\phi_n,\phi_n \big\rangle \,ds
\nonumber \\
&= \frac{\eta_n^2}{2} \int_1^{t_0}
\big\langle I''(w_0)\phi_n,\phi_n \big\rangle \,ds
+\frac{\eta_n^2}{2} \int_1^{t_0}
\big\langle \big(I''(w_n)-I''(w_0) \big)\phi_n,\phi_n \big\rangle \,ds
\nonumber \\
&\quad +\frac{\eta_n^2}{2} \int_1^{t_0}
\big\langle \big( I''(w_n+\kappa_n \eta_n\phi_n)-I''(w_n) \big) \phi_n,\phi_n \big\rangle \,ds,
\end{align}
for some $\kappa_n \in (0,1)$. 
Now from $w_n+\kappa_n \eta_n \phi_n= \kappa_nu_n+(1-\kappa_n)w_n$, one has 
\begin{align*}
&\Big\langle \big( I''(w_n+\kappa_n \eta_n \phi_n )-I''(w_n) \big) \phi_n, \phi_n \Big\rangle \\
&= \int_{\RN} \Big\{ 
2 \phi_n^2 \Big( |\nabla \big( \kappa_n u_n+(1-\kappa_n ) w_n \big) |^2-|\nabla w_n|^2 \Big) 
+2 |\nabla \phi_n|^2 \Big( \big( \kappa_n u_n+(1-\kappa_n)w_n \big)^2-w_n^2 \Big) \\
&\hspace{4em} +8 \phi_n \nabla \phi_n \cdot \Big( \big( \kappa_n u_n + (1-\kappa_n)w_n \big) \nabla \big( \kappa_n u_n + (1-\kappa_n)w_n \big) -w_n \nabla w_n \Big) \\
&\hspace{4em} -p\phi_n^2 \Big( \big| \kappa_n u_n +(1-\kappa_n)w_n \big|^{p-1}-|w_n|^{p-1} \Big) \Big\} \,dx. 
\end{align*}
Since $u_n$ and $w_n$ converge to $w_0$ in $L^{\infty} \big( \RN \times [1,t_0] \big)$
by \ef{eq:5.2} and \ef{gradienti}, it follows that 
$$
\Big\langle \big( I''(w_n+\kappa_n \eta_n \phi_n)-I''(w_n) \big) \phi_n, \phi_n \Big\rangle
\le o(1) \| \phi_n(\cdot,t) \|_{H^1(\RN)}^2. 
$$
Thus by Lemma \ref{lem:5.1}, there exists $n_0=n_0(t_0) \in \N$ such that for $n \ge n_0$,
\begin{equation}\label{eq:5.27}
\int_1^{t_0}
\big\langle \big( I''(w_n+\kappa_n \eta_n\phi_n)-I''(w_n) \big) \phi_n,\phi_n \big\rangle \,ds 
\le 1.
\end{equation}
Similarly one gets for $n \ge n_1$, 
\begin{equation}\label{eq:5.26}
\int_1^{t_0} \big\langle \big(I''(w_n)-I''(w_0) \big)\phi_n,\phi_n \big\rangle \,ds \le 1. 
\end{equation}
Next since $\theta_n=\phi_n-\phi$, it follows that 
\begin{align*}
\int_1^{t_0}
\big\langle I''(w_0)\phi_n,\phi_n \big\rangle \,ds 
= \int_1^{t_0} \Big(
\big\langle I''(w_0)\phi,\phi \big\rangle
+2 \big\langle I''(w_0)\phi,\theta_n \big\rangle
+\big\langle I''(w_0)\theta_n,\theta_n \big\rangle \Big) \,ds.
\end{align*}
By (i) of Lemma \ref{lem:5.3}, there exists $n_1=n_1(t_0) \in \N$ such that for $n \ge n_1$,
\begin{align}\label{eq:5.28}
 \int_1^{t_0} \big\langle I''(w_0)\theta_n,\theta_n \big\rangle \,ds 
&= \int_1^{t_0} \int_{\RN}
\Big\{ (1+2w_0^2)| \nabla \theta_n|^2+8 w_0 \theta_n \nabla w_0 \cdot \nabla \theta_n \nonumber \\
&\hspace{6em} +2 \theta_n^2 |\nabla w_0|^2 +\theta_n^2 -pw_0^{p-1} \theta_n^2 
\Big\} \,dx \,ds \nonumber \\
&\le C \int_1^{t_0} \| \theta_n(\cdot ,s) \|_{H^1}^2 \,ds \le \tilde{C},
\end{align}
\begin{align}\label{eq:5.29}
 \int_1^{t_0} \big\langle I''(w_0) \phi,\theta_n \big\rangle \,ds 
&= \int_1^{t_0} \int_{\RN}
\Big\{ (1+2w_0^2) \nabla \phi \cdot \nabla \theta_n +4 w_0 \phi \nabla w_0 \cdot \nabla \theta_n  \nonumber \\
&\hspace{2em} 
+ 4 w_0 \theta_n \nabla w_0 \cdot \nabla \phi
+2 \phi \theta_n |\nabla w_0|^2 +\phi \theta_n -pw_0^{p-1} \phi \theta_n 
\Big\} \,dx \,ds \nonumber \\
&\le C \left( \int_1^{t_0} \| \phi(\cdot ,s) \|_{H^1}^2 \,ds \right)^{1\over 2}
\left( \int_1^{t_0} \| \theta_n(\cdot ,s) \|_{H^1}^2 \,ds \right)^{1\over 2}
\le \tilde{C},
\end{align}
where $\tilde{C}$ is independent of $n \in \N$ and $t_0$. 
Finally from \ef{eq:5.15}, it follows that 
\begin{align*}
\int_1^{t_0} \big\langle I''(w_0)\phi,\phi \big\rangle \,ds 
&= \big\langle I''(w_0)\psi_1,\psi_1 \big \rangle
\int_1^{t_0} (C_0)^2 e^{-2 \mu_1 s} \,ds \\
&\quad +2 \int_1^{t_0} C_0 e^{-\mu_1 s}
\Big\langle I''(w_0)\psi_1, \sum_{i=1}^N C_i 
\frac{\partial w_0}{\partial x_i} +\tilde{\theta}
\Big\rangle \,ds \\
&\quad +\int_1^{t_0} \Big\langle I''(w_0)
\Big( \sum_{i=1}^N C_i 
\frac{\partial w_0}{\partial x_i} +\tilde{\theta} \Big),
\sum_{i=1}^N C_i 
\frac{\partial w_0}{\partial x_i} +\tilde{\theta}
\Big\rangle \,ds.
\end{align*}
Noticing that 
$\big \langle I''(w_0)\psi_1,\, \cdot \, \big\rangle
=({\mathcal L}_0 \psi_1,\, \cdot \, )_{L^2}
=\mu_1(\psi_1,\, \cdot \, )_{L^2}$, 
$$
\Big\langle I''(w_0) \frac{\partial w_0}{\partial x_i},\, \cdot \, \Big\rangle
=\Big( {\mathcal L}_0 \Big( \frac{\partial w_0}{\partial x_i} \Big),\, \cdot \, \Big)_{L^2}=0,
$$
and that $\psi_1$, $\frac{\partial w_0}{\partial x_i}$ are orthogonal in $L^2$, we have 
\begin{align}\label{eq:5.30}
&\int_1^{t_0} \big\langle I''(w_0)\phi,\phi \big\rangle \,ds \nonumber \\
&= -\frac{(C_0)^2}{2} ( e^{-2\mu_1 t_0}-e^{-2\mu_1}) 
+2 \mu_1 C_0 \int_1^{t_0} e^{-\mu_1 s}
( \psi_1,\tilde{\theta} )_{L^2} \,ds
+\int_1^{t_0} \big\langle I''(w_0) \tilde{\theta},\tilde{\theta}
\big\rangle \,ds. 
\end{align}
Next by the Schwarz inequality, one has 
\begin{align*}
 2 \mu_1 C_0 \int_1^{t_0} e^{-\mu_1 s}
( \psi_1,\tilde{\theta} )_{L^2} \,ds 
&\le 2 | \mu_1| |C_0| e^{-\mu_1 t_0}
\| \psi_1 \|_{L^2} \int_1^{t_0} \| \tilde{\theta}(\cdot,s)
\|_{L^2} \,ds \\
&\le 2| \mu_1| |C_0| \sqrt{t_0}e^{-\mu_1 t_0}
\left( \int_1^{t_0} \| \tilde{\theta}(\cdot,s) \|_{H^1}^2 \,ds 
\right)^{1\over 2}.
\end{align*}
Let $\tilde{T}>0$ be as in Lemma \ref{lem:5.4} and suppose 
$\ell \tilde{T} \le t_0 \le (\ell +1)\tilde{T}$ for some $\ell \in \N \cup \{0\}$. Then 
\begin{align*}
\int_1^{t_0} \| \tilde{\theta}(\cdot,s) \|_{H^1}^2 \,ds
&\le \int_1^{\tilde{T}} \| \tilde{\theta}(\cdot,s) \|_{H^1}^2 \,ds
+\sum_{k=1}^{\ell} \int_{k\tilde{T}}^{(k+1)\tilde{T}} \| \tilde{\theta}(\cdot,s) \|_{H^1}^2 \,ds \\
&\le \int_1^{\tilde{T}} \| \tilde{\theta}(\cdot,s) \|_{H^1}^2 \,ds 
+\sum_{k=1}^{\ell} e^{- \alpha k\tilde{T}} \\
&\le \int_1^{\tilde{T}} \| \tilde{\theta}(\cdot,s) \|_{H^1}^2 \,ds
+\frac{e^{-\alpha \tilde{T}}}{1-e^{-\alpha \tilde{T}}}.
\end{align*}
Thus there exists $L>0$ independent of $t_0$ such that 
\begin{equation}\label{eq:5.31}
2\mu_1 C_0 \int_1^{t_0} e^{-\mu_1 s} 
(\psi_1,\tilde{\theta})_{L^2} \,ds
\le L| C_0 | \sqrt{t_0} e^{-\mu_1 t_0}.
\end{equation}
Similarly one has 
\begin{equation}\label{eq:5.32}
\int_1^{t_0} \big\langle I''(w_0) \tilde{\theta},\tilde{\theta} 
\big\rangle \,ds \le L. 
\end{equation}
From \ef{eq:5.25}-\ef{eq:5.32}, we obtain
\begin{align*}
&\int_1^{t_0} \big( I(u)(s+t_n)-I(w_0) \big) \,ds \\
&\le \frac{\eta_n^2}{2}
\Big( 2+2\tilde{C}+L+\frac{(C_0)^2}{2}e^{-2\mu_1}
+L |C_0| \sqrt{t_0} e^{-\mu_1 t_0}
-\frac{(C_0)^2}{2} e^{-2\mu_1 t_0} \Big)
\end{align*}
for $n \ge \max \{ n_0,n_1\}$. 
Now suppose by contradiction that $C_0 \ne 0$. 
Then since $\mu_1<0$, one has 
$$
2+2\tilde{C}+L+\frac{(C_0)^2}{2}e^{-2\mu_1}
+L |C_0| \sqrt{t_0} e^{-\mu_1 t_0}
-\frac{(C_0)^2}{2} e^{-2\mu_1 t_0}
\to -\infty \quad \hbox{as} \ t_0 \to \infty.
$$
This contradicts to \ef{eq:5.24}. 
Thus it follows that $C_0=0$ and hence the proof is complete. 
\end{proof}

\noindent
{\bf Proof of Proposition \ref{prop:4.2} concluded}. 
Let $C_1,\cdots, C_N$ be as defined in \ef{eq:5.14} 
and $\tilde{T}>0$, $\alpha>0$ be as in Lemma \ref{lem:5.4}. 
We put ${\bf C}=(C_1, \cdots, C_N)$ and $z_n:= \eta_n {\bf C} \in \RN$. 
Since $\eta_n \to 0$, we may assume $|z_n| \le 1$. 
By Lemma \ref{lem:5.5}, the orthogonality of $\frac{\partial w_0}{\partial x_i}$, $\tilde{\theta}(\cdot,\tau_0)$ in $L^2(\RN)$ and from $\| \phi_0 \|_{L^2} \le 2$, one has 
$$
4 \ge \| \phi_0\|_{L^2}^2
=\left\| \sum_{i=1}^N C_i \frac{\partial w_0}{\partial x_i}
+\tilde{\theta}(\cdot,\tau_0)
\right\|_{L^2}^2
= | {\bf C} |^2 \| \nabla w_0 \|_{L^2}^2
+\| \tilde{\theta}(\cdot,\tau_0) \|_{L^2}^2.$$
Since $\| \nabla w_0 \|_{L^2}=\| \nabla w\|_{L^2}$, it follows that
$| {\bf C} | \le \frac{2}{ \| \nabla w\|_{L^2}}=:M$ and
hence $| z_n| \le M \eta_n$.
Next by the definitions of $\phi_n$, $\theta_n$ and from Lemma \ref{lem:5.5},
we get
\begin{align*}
&u(x,t+t_n)-w(x+y_n+z_n)\\
&= \eta_n \phi_n(x,t)+w(x+y_n)-w(x+y_n+z_n) \\
&= \eta_n \theta_n(x,t)+\eta_n \phi(x,t)+w(x+y_n)-w(x+y_n+z_n)\\
&= \eta_n \theta_n(x,t)
+\sum_{i=1}^N \eta_n C_i \frac{\partial w_0}{\partial x_i}(x)
+\eta_n \tilde{\theta}(x,t)
+w(x+y_n)-w(x+y_n+z_n)\\
&= \eta_n\theta_n(x,t) +\eta_n \tilde{\theta_n}(x,t) 
+ \big( \nabla w_0(x) \cdot z_n + w(x+y_n) -w(x+y_n+z_n) \big) \\
&= \eta_n \theta_n(x,t) +\eta_n \tilde{\theta_n}(x,t) 
+ \big( \nabla w(x+y_n)\cdot z_n +w(x+y_n)-w(x+y_n+z_n) \big) \\
&\quad + \big( \nabla w_0(x) -\nabla w(x+y_n) \big) \cdot z_n. 
\end{align*}
By Lemma \ref{lem:2.5} (i), one has
$$
\| w(\cdot+y_n+z_n)-w(\cdot+y_n)-\nabla w(\cdot+y_n) \cdot z_n \|_{H^1}^2 
\le C |z_n|^4. 
$$
Moreover since $w(\cdot+y_n) \to w(\cdot +y_0)=w_0(\cdot)$ in $C^2(\RN)$, 
we also have
$$
\| \big( \nabla w_0(\cdot)-\nabla w(\cdot+y_n) \big) \cdot z_n \|_{H^1}^2
=o(1) |z_n|^2. 
$$
Thus by the Triangular inequality, we obtain
\begin{align*}
\eta^2(y_n+z_n,t_n+T) 
&= \int_T^{2T} \| u(\cdot,s+t_n)-w(\cdot+y_n+z_n) \|_{H^1}^2 \,ds \\
&\le \eta_n^2
\left( \int_{T}^{2T} \Big( 16 \| \theta_n(\cdot,s) \|_{H^1}^2
+ 16 \| \tilde{\theta}(\cdot,s) \|_{H^1}^2 \Big) \,ds
+CT \big( |z_n|^2+o(1) \big) \right). 
\end{align*}
By Lemma \ref{lem:5.3} (ii), there exist $T>1$ and $n_2=n_2(T) \in \N$ such that
$$
\int_T^{2T} \| \theta_n(\cdot,s) \|_{H^1}^2 \,ds \le \frac{1}{256} 
\quad \hbox{and} \quad CT \big( |z_n|^2+o(1) \big) \le \frac{1}{8} \quad \hbox{for} \ n \ge n_2.
$$
Taking $T>1$ large if necessary, we may assume 
$T>\tilde{T}$ and $e^{-\alpha T} \le \frac{1}{256}$.
Then by Lemma \ref{lem:5.4},
$$
\int_T^{2T} \| \tilde{\theta}(\cdot,s) \|_{H^1}^2 
\,ds \le \frac{1}{256}.$$
Thus we obtain $\eta^2(y_n+z_n,t_n+T) \le \frac{1}{4} \eta^2(y_n,t_n)$. 
This completes the proof. \qed

\section{Proof of the main results}

\noindent
In this section, we will prove the main results of the paper. 
\subsection{Proof of Theorem \ref{thm:1.2}}
Let $u_0 \in C_0^{\infty}(\RN)$ be non-negative, radially non-increasing and not identically zero. 
Then, by means of Lemma \ref{lem:0.3}, we know that 
$$
u(x,t) > 0, \quad 
u(x,t)=v(|x|,t), \quad 
v_r(|x|,t) < 0 \quad
\text{for any $x\in\R^N$ and $t\in (0,T_{\rm max})$}.
$$
If $u$ is globally defined, we have that $T_{\rm max}=\infty$. 
Then by Proposition \ref{prop:6.1}, we learn that $u$ is uniformly bounded in space and time, 
and it satisfies the decay condition \eqref{eq:1.4}. \qed

\subsection{Proof of Theorem \ref{thm:1.1}}
Let $w \in \Omega(u)$. 
Then there exists a diverging sequence $\{ t_n\}_{n\in\N}$ such that 
$u(\cdot,t_n)\to w(\cdot)$ uniformly in $\RN$ as $n \to \infty$. 
Let $T>1$, $\eta_0>0, \ t_0>0$ be as in Lemma \ref{lem:4.5} and fix $\varepsilon>0$. 
Then by (i) of Lemma \ref{lem:2.3}, there exists $n_0\in \N$ such that $t_{n_0} \ge t_0$, 
$$
\eta(0,t_{n_0})=
\left( \int_0^T \| u(\cdot,s+t_{n_0})-w(\cdot) 
\|_{H^1(\RN)}^2 \,ds \right)^{1\over 2}
< \min \{ \eta_0,\varepsilon \}.
$$
Thus from Lemma \ref{lem:4.5}, one has 
\begin{equation}\label{eq:4.4}
\eta(0,t_{n_0}+kT) \le \tilde{C} \varepsilon \quad \hbox{for every} \ k\in \N. 
\end{equation}
Let $t\ge t_{n_0}$ be given. Then it follows that 
$t_{n_0} +kT \le t\le t_{n_0} +(k+1)T$ for some $k\in \N$. 
Thus we can write $t=t_{n_0}+kT+\tau$ with $\tau \in [0,T]$. 
Then by Lemma \ref{lem:2.4} and from \ef{eq:4.4}, 
there exists $C>0$ independent of $t$ such that 
\begin{equation}\label{eq:4.5}
\eta(0,t)=\eta(0,t_{n_0}+kT+\tau)
\le C \eta(0,t_{n_0}+kT) \le C \varepsilon. 
\end{equation}
Now let $K>0$ be arbitrary. 
Then $\ell T \le K <(\ell+1)T$ for some $\ell \in \N$. 
Then from \ef{eq:4.5}, 
\begin{align*}
\int_0^K \| u(\cdot,s+t)-w(\cdot) \|_{H^1}^2 \,ds 
&\le \int_0^{(\ell+1)T} \| u(\cdot,s+t)-w(\cdot) \|_{H^1}^2 \,ds \\
&=\sum_{j=0}^\ell \eta^2(0,t+j T)\leq  (\ell+1) C^2 \varepsilon^2. 
\end{align*}
This implies that \ef{eq:1.5} holds. 
Finally we show that the limit $w\in \Omega(u)$ is independent of the
choice of the sequence $\{ t_n \}_{n\in\N}$. 
Indeed suppose that there exists another sequence $ \{ \tilde{t}_n \}_{n\in\N}$ such that $u(\cdot,\tilde{t}_n) \to \tilde{w}$ uniformly for some $\tilde{w} \in \Omega(u)$. 
Then by the previous argument, one has
$$
\int_0^K \| u(\cdot,s+t)-\tilde{w}(\cdot) \|_{H^1}^2 \,ds 
\le (\ell+1)C^2 \varepsilon^2.
$$
This implies that $w \equiv \tilde{w}$ and hence the proof is complete. 
\qed

\subsection{Proof of Theorem \ref{thm:1.3}}

Let $\varphi_0 \in C_0^{\infty}(\RN)$ be a function which is non-negative, 
radially non-increasing and not-identically equal to zero.
For $\lambda>0$, we denote by $u_{\lambda}$ the solution of \ef{eq:1.1}-\ef{eq:1.2} with the initial condition $u_0=\lambda \varphi_0$. 
Then two cases may occur, either $u_{\lambda}$ blows up in finite time or it is globally defined. 
In the second case, $u_{\lambda}$ is positive, radially decreasing and satisfies the uniform decay condition \ef{eq:1.4} by Lemma \ref{lem:0.3} and Theorem \ref{thm:1.2}. 
Thus by Theorem \ref{thm:1.1}, 
$u_{\lambda}$ converges to 0 or a positive solution of \ef{eq:1.3} uniformly in $\RN$. 
Now we define 
\begin{align*}
\CA&:= \{ \lambda \in (0,\infty) : \ u_{\lambda} \ 
\hbox{blows up in finite time} \}.\\
\CB&:= \{ \lambda \in (0,\infty) : \ u_{\lambda} \ 
\hbox{converges to a positive solution of \ef{eq:1.3}} \ \hbox{uniformly in $\RN$} \}.\\
\CC&:= \{ \lambda \in (0,\infty) : \ u_{\lambda} \ 
\hbox{converges to zero} \ \hbox{uniformly in $\RN$} \}.
\end{align*}
One can see that $\CA, \CB, \CC$ are intervals and 
$\CA \cup \CB \cup \CC=(0,\infty)$. The proof of Theorem \ref{thm:1.3} consists of four steps. 

\smallskip
\noindent
{\bf Step 1}: $\CA$ is open. 

By using standard parabolic estimates, one can prove that, for fixed $t_0>0$, the mapping
$$
\lambda \to I \big( u_{\lambda}(\cdot,t_0) \big)$$
is continuous. 
On the other hand, it follows from Lemmas \ref{lem:6.6} and \ref{lem:6.7} that 
$u_{\lambda}$ blows up in finite time 
iff there is $t_0>0$ such that $I \big( u_\lambda(\cdot,t_0) \big)<0$. 
These facts imply that $\CA$ is open.
\hfill$\Box$

\smallskip
\noindent
{\bf Step 2}: $\CC$ is open and not empty. 

We observe that any constant less than $1$ is a super-solution of \ef{eq:1.1}. 
Moreover as we have observed in the proof of Lemma \ref{lem:6.4}, 
any positive solutions of \ef{eq:1.3} have maximum values strictly larger than 1. 
Finally for fixed $t>0$, 
$u_{\lambda}(\cdot,t)$ is continuous with respect to $\lambda$ uniformly in $x\in \RN$. 
From these facts, one can show that $\CC$ is open and not empty. 
\hfill$\Box$

\smallskip
\noindent
{\bf Step 3}: $\CA$ is not empty.

We choose $R>0$ so that ${\rm supp} \ \varphi_0 \subset B(0,R)$. 
Then, taking into account Lemma \ref{lem:6.6}, 
it suffices to show that $I( \lambda \varphi_0)<0$ for large $\lambda>0$. 
It follows that
\begin{equation*}
I(\lambda \varphi_0) 
=\frac{\lambda^2}{2} \int_{B(0,R)} \big(|\nabla \varphi_0|^2+\varphi_0^2\big) \,dx 
+\lambda^4 \int_{B(0,R)} \varphi_0^2 |\nabla \varphi_0|^2 \,dx
-\frac{\lambda^{p+1}}{p+1} \int_{B(0,R)} |\varphi_0|^{p+1} \,dx.
\end{equation*}
If $p>3$, or $p=3$ and $\int_{\RN} \varphi_0^2 |\nabla \varphi_0|^2 
-\frac{1}{4} | \varphi_0|^4 \,dx <0$, 
then we have $I(\lambda \varphi_0) \to -\infty$ as $\lambda \to \infty$. 
Thus we have $I(\lambda \varphi_0)<0$ for large $\lambda>0$ and $\CA$ is not empty. 
\hfill$\Box$

\smallskip
Now since $(0,\infty)$ is connected, it follows that $\CB$ is not empty.

\smallskip
\noindent
{\bf Step 4}: $\CB$ consists of a single point $\lambda_0$. 

Suppose by contraction that the set $\CB$ has at least two elements $\lambda_0<\lambda_1$. 
We claim that $(\lambda_0,\lambda_1) \subset \CA$. 
Now let $\lambda \in (\lambda_0,\lambda_1)$ be arbitrarily given. 
First we show that
\begin{equation}\label{eq:7.1}
\lim_{ t \to T_{\lambda} } I \big( u_{\lambda}(\cdot,t) \big) <0,
\end{equation}
where $T_{\lambda}>0$ is the maximal existence time for $u_{\lambda}$. 
To this end, we suppose by contradiction that 
$I \big( u_{\lambda}(\cdot,t) \big) \ge 0$ for all $t\in (0,T_{\lambda}]$. 
Then by Lemmas \ref{lem:6.6}-\ref{lem:6.7}, $u_{\lambda}$ is globally defined. 
Moreover since $u_{\lambda_0}(\cdot,0) <u_{\lambda}(\cdot,0)$, 
we have $u_{\lambda_0}(x,t) \le u_{\lambda}(x,t)$ 
for all $x\in \RN$ and $t>0$ by the Comparison Principle.
Finally since $u_{\lambda_0}(x,t) \to w(x)$ as $t \to \infty$, 
it follows that $\lambda \in \CB$ and hence $u_{\lambda}(x,t) \to w(x)$ as $t \to \infty$ 
by the radial symmetry of $u_{\lambda}$ 
and the uniqueness of positive radial solution of \ef{eq:1.3}. 
Next we put $\phi=u_{\lambda}-u_{\lambda_0}$. Then from \ef{eq:1.1}, one has
\begin{align}\label{eq:7.2}
0&= \phi_t + {\mathcal L} \phi +2(u_{\lambda}^2-w^2)\Delta \phi 
+ \big( 4w\Delta w-2(u_{\lambda}+u_{\lambda_0})\Delta u_{\lambda_0} \big)\phi \nonumber \\
&\quad 
+2(|\nabla u_{\lambda}|^2-| \nabla w|^2)\phi
+\big( 4w \nabla w-2 u_{\lambda_0} \nabla (u_{\lambda}+u_{\lambda_0}) \big) \cdot \nabla \phi \\
&\quad 
+p \Big( w^{p-1}-\big( \kappa u_{\lambda}+(1-\kappa)u_{\lambda_0} \big)^{p-1} \Big) \phi, \nonumber
\end{align}
where $\kappa \in (0,1)$ and ${\mathcal L}$ is the linearized operator 
which is defined in \ef{linearized}. 
Let $\mu_1<0$ be the first eigenvalue of ${\mathcal L}_0$ 
and $\psi_1$ be the corresponding eigenfunction. 
Multiplying $\psi_1$ by \ef{eq:7.2} and integrating it over $\RN$, 
one can obtain as in the proof of Lemma \ref{lem:5.2} that 
$$
\frac{\partial}{\partial t} \int_{\RN} \phi \psi_1 \,dx
-\big( \mu_1 +\varepsilon(t) \big) \int_{\RN} \phi \psi_1 \,dx \ge 0 
\quad \hbox{for all} \ t>0,
$$
where $\varepsilon(t) \to 0$ as $t \to \infty$. 
Since $\mu_1<0$, it follows that 
$\int_{\RN} \phi(\cdot,t) \psi_1 \,dx \to \infty$ as $t \to \infty$. 
But this contradicts to $\phi(\cdot,t) \to 0$ as $t \to \infty$. 
Thus inequality \ef{eq:7.1} holds. 
Now from \ef{eq:7.1} and by the continuity of $I \big( u_{\lambda}(\cdot,t) \big)$ with respect to $t$, we have $I \big( u_{\lambda}(\cdot,t) \big)<0$ for $t$ sufficiently close to $T_{\lambda}$. 
Then one can show that $u_{\lambda}(x,t)$ blows up in finite time and hence $\lambda \in \CA$. 
Since $\lambda \in ( \lambda_0,\lambda_1)$ is arbitrarily, 
we obtain $(\lambda_0,\lambda_1) \subset \CA$ as claimed. 
Next for $\lambda \in (\lambda_0,\lambda_1)$, we have $u_{\lambda}(x,t) \le u_{\lambda_1}(x,t)$ for all $x\in \RN$ and $t>0$ by the Comparison Principle. 
Since $\lambda \in \CA$ and $\lambda_1 \in \CB$, it follows that
$u_{\lambda_1}(\cdot,t) \to w$ as $t \to \infty$ but $u_{\lambda}$ blows up in finite time. 
This is a contradiction and hence the set $\CB$ consists of a single point $\lambda_0$. 
Finally, by Steps 1-4, it follows that
$\CA=(\lambda_0,\infty)$,  $\CB=\{ \lambda_0\}$ and $\CC=(0,\lambda_0)$.
This completes the proof. \qed

\bigskip

\medskip

\end{document}